\newtheorem{proposition}{Proposition}
\newtheorem{remark}{Remark}
\newcolumntype{P}[1]{>{\centering\arraybackslash}p{#1}}
\begin{document}
\title{An energy-stable mixed formulation for isogeometric analysis of incompressible hyper-elastodynamics}
\author{Ju Liu$^{\textup{ a}}$, Alison L. Marsden$^{\textup{ a}}$, Zhen Tao$^{\textup{ b}}$ \\
$^a$ \textit{\small Department of Pediatrics (Cardiology), Bioengineering,}\\
\textit{\small and Institute for Computational and Mathematical Engineering, Stanford University,}\\
\textit{\small Clark Center E1.3, 318 Campus Drive, Stanford, CA 94305, USA}\\
$^b$ \textit{\small Institute for Computational Engineering and Sciences, The University of Texas at Austin,}\\
\textit{\small 201 East 24th Street, 1 University Station C0200, Austin, TX 78712, USA}\\
\small \textit{E-mail address:} liuju@stanford.edu, amarsden@stanford.edu, taozhen.cn@gmail.com
}
\date{}
\maketitle

\section*{Abstract}
We develop a mixed formulation for incompressible hyper-elastodynamics based on a continuum modeling framework recently developed in \cite{Liu2018} and smooth generalizations of the Taylor-Hood element based on non-uniform rational B-splines (NURBS). This continuum formulation draws a link between computational fluid dynamics and computational solid dynamics. This link inspires an energy stability estimate for the spatial discretization, which favorably distinguishes the formulation from the conventional mixed formulations for finite elasticity. The inf-sup condition is utilized to provide a bound for the pressure field. The generalized-$\alpha$ method is applied for temporal discretization, and a nested block preconditioner is invoked for the solution procedure. The inf-sup stability for different pairs of NURBS elements is elucidated through numerical assessment. The convergence rate of the proposed formulation with various combinations of mixed elements is examined by the manufactured solution method. The numerical scheme is also examined under compressive and tensile loads for isotropic and anisotropic hyperelastic materials. Finally, a suite of dynamic problems is numerically studied to corroborate the stability and conservation properties.

\vspace{5mm}

\noindent \textbf{Keywords:} Incompressible elasticity, Mixed formulation, Inf-sup condition, Energy stability, Generalized-$\alpha$ method, Anisotropic arterial wall model

\section{Introduction}
\label{sec:introduction}

\subsection{Motivation and literature survey}
Over the past few decades, significant progress has been achieved in the finite element modeling of solid mechanics problems. A central topic is to devise a numerical scheme that works well in the incompressible limit. Under the small-strain assumption, this issue is well-understood, and it boils down to interpolating the displacement and pressure with elements that satisfy the Ladyzhenskaya-Babu\v{s}ka-Brezzi (LBB) or the inf-sup condition \cite{Boffi2013}. Under large strains, most materials exhibit volume-preserving behavior, which makes it imperative to appropriately handle the incompressibility constraint. This issue is particularly relevant for modeling biological tissues, which are largely incompressible due to their high water content. In fact, the nonlinear nature of large strain analysis, together with the kinematic constraint, makes the numerical analysis of incompressible materials quite challenging. Classical treatments of this class of problems include the $\bar{\textup{F} }$-projection method \cite{SouzaNeto1996,Elguedj2008,Hughes1975}, the enhanced assumed strain method (EAS) \cite{Simo1990,Simo1992}, and the mixed $u/p$ formulation \cite{Sussman1987}.

The $\bar{\textup{F} }$-projection and EAS methods share some similarities. Both methods are developed based on heuristic splits of the deformation gradient; the geometrically linear versions of the two methods are linked with the mixed finite element method \cite{Boffi2013,Hughes1977}. Nevertheless, there are drawbacks of both. For the $\bar{\textup{F} }$-projection method, its implementation requires a nonlocal matrix inversion if the projection is onto a continuous finite element space. The EAS method relies crucially on a static condensation procedure to maintain the pure displacement code structure. The penalty nature of the pure displacement formulation inevitably induces an ill-conditioned stiffness matrix, which imposes a severe constraint on the choice of linear solvers. It has long been known that both methods suffer from mesh instability or the hourglass mode \cite{Wriggers1996} and hence necessitate further refinements to numerical technologies for the hourglass control.

The mixed $u/p$ formulation introduces a \textit{pressure-like} variable as the Lagrange multiplier for the incompressibility constraint in the strain energy \cite{Sussman1987}. The resulting scheme necessitates interpolating the displacement and pressure fields independently. Performing a linearization of this formulation provides a justification for the use of inf-sup stable elements \cite{Auricchio2005}. Yet, for nonlinear problems, linearized stability is often insufficient to guarantee nonlinear stability \cite{Holm1985}. It remains unclear whether there is any a priori nonlinear stability estimate for the mixed $u/p$ formulation.

In the meantime, the stabilized finite element method, as a technique initially developed for computational fluid dynamics, has been extended to solid mechanics based on various variational formulations \cite{Abboud2018,Bonet2015a,Klaas1999,Liu2018,Rossi2016,Scovazzi2016,Zeng2017}. Using the stabilized formulation allows one to interpolate physical quantities with equal-order interpolations. This feature gives practitioners maximum flexibility in mesh generation and numerical implementation, and allows for low-order elements which are more robust than their higher-order counterparts. Equal-order interpolations always give an optimal constraint ratio \cite[Chapter 4]{Hughes1987}, which may be regarded as another appealing feature for incompressible elasticity. The stabilization term can be interpreted as a subgrid scale model within the variational multiscale framework \cite{Hughes1995,Hughes1986a,Liu2018,Rossi2016}. It has been observed that for inelastic models, the subgrid scale model requires careful design \cite{Zeng2017}. This observation partly motivates this work, in which we aim to design a stable numerical formulation for incompressible hyperelasticity that does not rely on subgrid scale numerical models with tunable parameters.

\subsection{Overview of the proposed method}
It is well-known that a finite element scheme is based on the formulation (i.e., the variational principle) and the discrete function spaces (i.e., the elements). Both components need to be properly accounted for in the design of numerical schemes. In this work, we introduce a mixed variational formulation different from the existing mixed $u/p$ formulation \cite{Sussman1987}. In that formulation, the momentum balance equations are coupled with an algebraic equation of state, which relates the pressure with $J$, the determinant of the deformation gradient \cite[Chapter 8]{Holzapfel2000}. In the incompressible limit, this relation reduces to $J=1$. In the new mixed formulation, the momentum equations are coupled with the differential mass equation written in terms of the pressure primitive variable set. The volumetric behavior is reflected through the so-called isothermal compressibility factor \cite{Liu2018}. In the incompressible limit, this term approaches zero, and the mass equation degenerates to the divergence-free constraint for the velocity field. Although $J=1$ is equivalent to the divergence-free constraint for the velocity field at the continuum level, they lead to different schemes at the discrete level. Based on the new mixed formulation, an a priori energy stability estimate can be obtained, and the inf-sup condition leads to a bound for the pressure solution. We regard these estimates as critical numerical properties embedded in the formulation that guarantee reliable results.

It should be pointed out that there are some existing formulations \cite{Hoffman2011,Idelsohn2008,Liu1986} that bear some similarity to ours, the key difference being that the Cauchy stress was expressed in a rate form in prior formulations. It is known that the rate constitutive equations are not built from free energies and cannot account for reversible elastic behavior \cite{Simo1984}. Therefore, prior formulations cannot have an a priori energy stability estimate. Additionally, the rate constitutive equation requires special numerical considerations \cite{Hughes1980a}. We aim to address these issues through the proposed formulation.  

The choice of elements plays an equally critical role in numerical design for large-strain elasticity problems. Here, we attempt to provide a numerical technique that can be conveniently and robustly extended to the higher-order regime. The NURBS elements have been shown to enjoy superior robustness for large strain analysis \cite{Cottrell2006,Lipton2010}. We adopt the same set of NURBS basis functions for the description of the geometry and approximation of the displacement field, aligning the proposed numerical formulation with the paradigm of isogeometric analysis \cite{Hughes2005}. The unique concept of $k$-refinement in isogeometric analysis allows one to generate higher-continuity basis functions without proliferation of degrees of freedom. However, it should be pointed out that in the setting of mixed finite elements, although the $k$-refinement leads to a pair of velocity-pressure elements that enjoy nearly the optimal constraint ratio \cite[Chapter 4]{Hughes1987}, it has been observed that such element types are not always inf-sup stable \cite{Rueberg2012}. To remedy this issue, it has been proposed to use subdivision technology to generate a NURBS analogue for the Q1-iso-Q2 element \cite{Dortdivanlioglu2017,Kadapa2016,Rueberg2012}. In this work, we adopt an alternative approach, the inf-sup stable smooth generalizations of the Taylor-Hood element. In our opinion, the Taylor-Hood element is more convenient for implementation, especially in the parallel setting. We numerically assess the inf-sup stability for different combinations of the $p$- and $k$-refinements for generating the Taylor-Hood elements. It will be observed that the elements pass the numerical test if the polynomial degree is elevated at least once by the $p$-refinement to generate the discrete velocity space. Using the above new mixed formulation and the stable smooth generalizations of the Taylor-Hood element offer a new approach for incompressible large strain elastodynamics with several appealing features: it is well-behaved in the incompressible regime, the semi-discrete formulation respects energy stability, it does not involve tunable parameters or subgrid scale numerical models, it can achieve improved accuracy, especially for stress calculations, by employing higher-order smooth basis functions.

The remainder of the work is organized as follows. In Section \ref{sec:elasticity}, we state the governing equations and weak formulation for hyper-elastodynamics. In Section \ref{sec:numerical_formulation}, the numerical scheme is presented and its numerical properties are analyzed. Following that, we numerically assess the inf-sup stability of different pairs of mixed NURBS elements. The elements that pass the test are used in the simulations for benchmark problems in Section \ref{sec:numerical_results}. We draw conclusions in Section \ref{sec:conclusion}.

\section{Hyper-elastodynamics}
\label{sec:elasticity}
\subsection{The initial boundary-value problem}
Let $\Omega_{\bm X}$ and $\Omega_{\bm x}$ be bounded open sets in $\mathbb R^{d}$ with Lipschitz boundaries, wherein $d$ represents the number of spatial dimensions. The motion of the body is described by a family of diffeomorphisms parameterized by the time coordinate $t$,
\begin{align*}
\bm\varphi_t(\cdot) = \bm\varphi(\cdot, t) : \Omega_{\bm X} &\rightarrow \Omega_{\bm x}^t = \bm \varphi(\Omega_{\bm X}, t) = \bm \varphi_t(\Omega_{\bm X}), \quad \forall t \geq 0, \\
\bm X &\mapsto \bm x = \bm \varphi(\bm X, t) = \bm \varphi_t(\bm X), \quad \forall \bm X \in \Omega_{\bm X}.
\end{align*}
In the above, $\bm x$ represents the current position of a material particle originally located at $\bm X$, which implies $\bm \varphi(\bm X, 0) = \bm X$. The displacement and velocity of the material particle are defined as
\begin{align*}
\bm U := \bm \varphi(\bm X, t) - \bm \varphi(\bm X, 0) = \bm \varphi(\bm X, t) - \bm X, \qquad
\bm V := \left. \frac{\partial \bm \varphi}{\partial t}\right|_{\bm X}= \left. \frac{\partial \bm U}{\partial t}\right|_{\bm X} = \frac{d\bm U}{dt}.
\end{align*}
In this work, we use $d\left( \cdot \right)/dt$ to denote a total time derivative. The spatial velocity is defined as $\bm v := \bm V \circ \bm \varphi_t^{-1}$. Analogously, we define $\bm u := \bm U \circ \varphi_t^{-1}$. The deformation gradient, the Jacobian determinant, and the right Cauchy-Green tensor are defined as
\begin{align*}
\bm F := \frac{\partial \bm \varphi}{\partial \bm X}, \qquad
J := \textup{det}\left(\bm F \right), \qquad \bm C := \bm F^T \bm F.
\end{align*}
We define $\tilde{\bm F}$ and $\tilde{\bm C}$ as
\begin{align*}
\tilde{\bm F} := J^{-\frac13}\bm F, \qquad \tilde{\bm C} := J^{-\frac23}\bm C,
\end{align*}
which represent the distortional parts of $\bm F$ and $\bm C$. We denote the thermodynamic pressure of the continuum body as $p$ and the density as $\rho$. The mechanical behavior of an elastic material can be described by a Gibbs free energy $G(\tilde{\bm C}, p)$. It is shown in \cite{Liu2018} that the Gibbs free energy can be additively split into an isochoric part and a volumetric part,
\begin{align*}
G(\tilde{\bm C}, p) = G_{ich}(\tilde{\bm C}) + G_{vol}(p).
\end{align*}
The constitutive relations for the density $\rho$, the isothermal compressibility factor $\beta_{\theta}$, and the deviatoric part of the Cauchy stress can be described in terms of the Gibbs free energy as follows,
\begin{align*}
\rho(p) := \left( \frac{d G_{vol}}{d p} \right)^{-1}, \quad \beta_{\theta}(p) := \frac{1}{\rho} \frac{d\rho}{d p} = -\frac{\partial^2 G_{vol}}{\partial p^2} / \frac{\partial G_{vol}}{\partial p}, \quad \bm \sigma^{dev} := J^{-1} \tilde{\bm F} \left( \mathbb P : \tilde{\bm S} \right) \tilde{\bm F}^T ,
\end{align*}
wherein the projector $\mathbb P$ and the fictitious second Piola-Kirchhoff stress $\tilde{\bm S}$ are defined as
\begin{align*}
\mathbb P := \mathbb I - \frac13 \bm C^{-1} \otimes \bm C, \quad \tilde{\bm S} := 2 \frac{\partial \left( \rho_0 G\right) }{\partial \tilde{\bm C}} = 2 \frac{\partial \left( \rho_0 G_{ich} \right)}{\partial \tilde{\bm C}}, 
\end{align*}
$\mathbb I$ is the fourth-order identity tensor, and $\rho_0$ is the density in the referential configuration. Interested readers are referred to \cite{Liu2018} for a detailed discussion of the governing equations and the constitutive relations. It is known that $\rho J = \rho_0$ due to mass conservation in the Lagrangian description. We can therefore introduce $\rho(J) = \rho_0 / J$ as an alternative way of defining the density in the Lagrangian framework. In fact, we will adopt this choice in the following discussion. Under the isothermal condition, the energy equation is decoupled, and it suffices to consider the following equations for the motion of the continuum body,
\begin{align}
\label{eq:strong_form_kinematic}
& \bm 0 = \frac{d\bm u}{dt} - \bm v, && \mbox{ in } \Omega_{\bm x}^t, \displaybreak[2] \\
\label{eq:strong_form_pressure}
& 0 = \beta_{\theta}(p) \frac{dp}{dt} + \nabla_{\bm x} \cdot \bm v && \mbox{ in } \Omega_{\bm x}^t,  \displaybreak[2] \\
\label{eq:strong_form_momentum}
& \bm 0 = \rho(J) \frac{d\bm v}{dt} - \nabla_{\bm x} \cdot \bm \sigma^{dev} + \nabla_{\bm x} p - \rho(J) \bm b, && \mbox{ in } \Omega_{\bm x}^t.
\end{align}
In the above system, the equations \eqref{eq:strong_form_kinematic} describe the kinematic relation, and the equations \eqref{eq:strong_form_pressure} and \eqref{eq:strong_form_momentum} describe the conservation of mass and the balance of linear momentum. The boundary $\Gamma_{\bm x}^t = \partial \Omega^t_{\bm x}$ can be partitioned into two non-overlapping subdivisions:
$
\Gamma_{\bm x}^t = \Gamma_{\bm x}^{g,t} \cup \Gamma_{\bm x}^{h,t},
$
wherein $\Gamma^{g,t}_{\bm x}$ is the Dirichlet part of the boundary, and $\Gamma^{h,t}_{\bm x}$ is the Neumann part of the boundary. Boundary conditions can be stated as
\begin{align}
\label{eq:strong_form_dirichlet_u}
& \bm u = \bm g, \mbox{ on } \Gamma_{\bm x}^{g,t}, \qquad
\bm v = \frac{d\bm g}{dt}, \mbox{ on } \Gamma_{\bm x}^{g,t}, \qquad (\bm \sigma^{dev} - p\bm I) \bm n = \bm h, \mbox{ on } \Gamma_{\bm x}^{h,t}.
\end{align}
Given the initial data $\bm u_0$, $p_0$, and $\bm v_0$, the initial conditions can be stated as
\begin{align}
\label{eq:initial_condition_v}
\bm u(\bm x, 0) = \bm u_0(\bm x), \qquad p(\bm x, 0) = p_0(\bm x), \qquad \bm v(\bm x, 0) = \bm v_0(\bm x).
\end{align}
The equations \eqref{eq:strong_form_kinematic}-\eqref{eq:initial_condition_v} constitute an initial-boundary value problem for elastodynamics. 

\begin{remark}
It is known that $J=1$ is equivalent to $\nabla_{\bm x} \cdot \bm v = 0$ due to the identity $dJ/dt = J \nabla_{\bm x}\cdot \bm v$. However, the usage of $\nabla_{\bm x} \cdot \bm v = 0$, or more generally \eqref{eq:strong_form_pressure}, is uncommon in the literature. A reason is that the constraint $J=1$ is fitted into the elastostatic model, and the usage of $\bm v$ inevitably necessitates an elastodynamic model, which needs additional considerations in the numerical formulation. Another reason could be the missing link between $\beta_{\theta}$ and the strain energy. The constitutive relation for $\beta_{\theta}$ allows compressible materials and is recently derived in \cite{Liu2018}.
\end{remark}

Since the above system looks different from the existing theory for hyperelasticity, we give an example of the constitutive model here. Let $I_1$ and $I_2$ designate the first and second invariants of the right Cauchy-Green tensor, that is,
\begin{align*}
I_1 := \textup{tr} \bm C, \qquad I_2 := \frac12 \left[ \left( \textup{tr}\bm C \right)^2 - \textup{tr}\left( \bm C^2 \right) \right].
\end{align*}
For isotropic materials, the isochoric part of the free energy can be conveniently expressed in terms of $\tilde{I}_1 := J^{-2/3} I_1$ and $\tilde{I}_2 := J^{-4/3} I_2$. The Mooney-Rivlin model can be expressed as
\begin{align*}
G_{ich}(\tilde{\bm C}) = \frac{c_1}{2\rho_0} \left( \tilde{I}_1 - 3 \right) + \frac{c_2}{2\rho_0} \left( \tilde{I}_2 - 3 \right),
\end{align*}
where $c_1$ and $c_2$ are parameters that have the same dimension as pressure. The volumetric part of the Gibbs free energy can be built as a Legendre transformation of the Helmholtz volumetric free energy \cite{Liu2018}. Here, we give an example
\begin{align}
\label{eq:compressible_vol_energy_example}
G_{vol}(p) = \frac{\kappa}{\rho_0} \left( 1 - e^{-\frac{p}{\kappa}} \right),
\end{align}
which is transformed from the energy proposed in \cite{Liu1994}. In \eqref{eq:compressible_vol_energy_example}, $\kappa$ designates the bulk modulus. This free energy leads to the relation 
\begin{align*}
\rho(p) = \rho_0 e^{\frac{p}{\kappa}}, \qquad \beta_{\theta}(p) = 1 / \kappa.
\end{align*}
As the bulk modulus $\kappa$ approaches infinity, the material becomes incompressible, and we have
$G_{vol}(p) = p / \rho_0$ in the limit. This volumetric energy leads to $\rho(p) = \rho_0$ and $\beta_{\theta}(p) = 0$.

\subsection{Reduction to the small-strain theory}
Assuming the strain is infinitesimally small, we have $\nabla_{\bm x} = \nabla_{\bm X}$ and $\rho(J) = \rho_0$. We also assume that $G_{vol}$ adopts the form given in \eqref{eq:compressible_vol_energy_example}. Then the mass equation \eqref{eq:strong_form_pressure} can be written as 
\begin{align}
\label{eq:small_strain_diff_mass}
0 = \frac{1}{\kappa}\frac{dp}{dt} + \frac{d}{dt} \nabla_{\bm x}\cdot \bm u = \frac{d}{dt}\left( \frac{p}{\kappa} + \nabla_{\bm x} \cdot \bm u \right).
\end{align}
Integrating the above relation in time results in
\begin{align}
\label{eq:small_strain_mass}
0 = \frac{p}{\kappa} + \nabla_{\bm x}\cdot \bm u,
\end{align}
with a proper choice of the reference value for the pressure. Assuming further that the we are seeking a static equilibrium solution, the momentum equation \eqref{eq:strong_form_momentum} becomes
\begin{align}
\label{eq:small_strain_momentum}
\nabla_{\bm x} \cdot \bm \sigma^{dev} - \nabla_{\bm x} p = \rho_0 \bm b.
\end{align}
The equations \eqref{eq:small_strain_mass}-\eqref{eq:small_strain_momentum} constitute the classical mixed formulation for the small strain elastostatics \cite[Chapter 4]{Hughes1987}.

\begin{remark}
For elastodynamics, one may instinctively add an inertial term to \eqref{eq:small_strain_momentum} and couple it with \eqref{eq:small_strain_mass}. However, numerical simulations indicate that this system is probably ill-posed. It is suggested to couple \eqref{eq:small_strain_momentum} with \eqref{eq:small_strain_diff_mass} rather than \eqref{eq:small_strain_mass} for dynamic calculations \cite{Scovazzi2016}. A potential mathematical explanation is that \eqref{eq:small_strain_mass} does not provide the proper coercive structure in the dynamic setting. This point will be further clarified in Proposition \ref{prop:energy_stability}.
\end{remark}

\subsection{Weak formulation}
\label{subsec:weak_formulation}
Henceforth, we restrict our discussion to \textit{fully incompressible} materials. Let us denote the trial solution spaces for the displacement, velocity, and pressure in the current domain as $\mathcal S_{\bm u}$, $\mathcal S_{\bm v}$, and $\mathcal S_p$, respectively. The Dirichlet boundary condition defined on $\Gamma^g_{\bm x}$ is properly built into the definitions of the $\mathcal S_{\bm u}$ and $\mathcal S_{\bm v}$. Let $\mathcal V_{\bm v}$ and $\mathcal V_p$ denote the corresponding test function spaces. The mixed formulation on the current configuration can be stated as follows. Find $\bm y(t) := \left\lbrace  \bm u(t), p(t), \bm v(t)\right\rbrace^T \in \mathcal S_{\bm u} \times \mathcal S_p \times \mathcal S_{\bm v}$ such that for $t\in [0, T]$,
\begin{align}
\label{eq:mix_solids_kinematics_current}
& \bm 0 = \mathbf B^k\left( \dot{\bm y}, \bm y \right) :=  \frac{d\bm u}{dt} - \bm v, \displaybreak[2]\\
\label{eq:mix_solids_mass_current}
& 0 = \mathbf B^p\left( w_{p}; \dot{\bm y}, \bm y \right) := \int_{\Omega_{\bm x}^t} w_{p} \nabla_{\bm x} \cdot \bm v d\Omega_{\bm x}, \displaybreak[2] \\
\label{eq:mix_solids_momentum_current}
& 0 = \mathbf B^m\left( \bm w_{\bm v}; \dot{\bm y}, \bm y \right) := \int_{\Omega_{\bm x}^t} \bm w_{\bm v} \cdot \rho(J) \frac{d\bm v}{dt} + \nabla_{\bm x} \bm w_{\bm v} : \bm \sigma_{dev} - \nabla_{\bm x} \cdot \bm w_{\bm v} p - \bm w_{\bm v} \cdot \rho(J)  \bm b d\Omega_{\bm x}  \displaybreak[2] \nonumber \\
& \qquad \qquad \qquad \qquad \qquad - \int_{\Gamma_{\bm x}^{h,t}} \bm w_{\bm v} \cdot \bm h d\Gamma_{\bm x}, 
\end{align}
for $\forall \left\lbrace  w_{p}, \bm w_{\bm v}\right\rbrace \in \mathcal V_p \times \mathcal V_{\bm v}$, with $\bm y(0) = \left\lbrace \bm u_{0}, p_{0}, \bm v_{0} \right\rbrace^T$. Here $\bm u_{0}$, $p_{0}$, and $\bm v_{0}$ are the $\mathcal L^2$ projections of the initial data onto the trial solution spaces. It is worth pointing out that although the material is fully incompressible, we still use $\rho(J)=\rho_0 / J$ in \eqref{eq:mix_solids_momentum_current}, since the resulting discrete scheme cannot guarantee pointwise satisfaction of $J=1$. In the above and henceforth, the formulations for the kinematic equations, the mass equation, and the linear momentum equations are indicated by the superscripts $k$, $p$ and $m$, respectively. The equations \eqref{eq:mix_solids_kinematics_current}-\eqref{eq:mix_solids_momentum_current} constitute the weak form of the problem. Performing integration by parts and using the localization argument, one can show the equivalence between the weak-form problem and the initial-boundary value problem. Let us define the following quantities on the material frame of reference via a pull-back operator:
\begin{align*}
& W_{P}(\bm X,t) := w_p(\varphi_t(\bm X),t), && \bm W_{\bm V}(\bm X ,t) := \bm w_{\bm v}(\varphi_t(\bm X),t), && P(\bm X,t) := p(\varphi_t(\bm X),t), \\
& \bm B(\bm X,t) := \bm b(\varphi_t(\bm X),t), && \bm H(\bm X,t):=\bm h(\varphi_t(\bm X),t), && \bm G(\bm X,t):=\bm g(\varphi_t(\bm X),t).
\end{align*}
Correspondingly, the trial solution spaces are denoted as $\mathcal S_{\bm U}$, $\mathcal S_{P}$, and $\mathcal S_{\bm V}$; the test function spaces are denoted as $\mathcal V_{P}$ and $\mathcal V_{\bm V}$. The weak formulation can be alternatively stated as follows. Find $\bm Y(t) := \left\lbrace  \bm U(t), P(t), \bm V(t)\right\rbrace^T \in \mathcal S_{\bm U} \times \mathcal S_P \times \mathcal S_{\bm V}$ such that for $t\in [0, T]$,
\begin{align}
& \bm 0 = \mathbf B^k\left( \dot{\bm Y}, \bm Y \right) :=  \frac{d\bm U}{dt} - \bm V,\displaybreak[2]\\
& 0 = \mathbf B^p\left( W_P; \dot{\bm Y}, \bm Y \right) := \int_{\Omega_{\bm X}} W_P \nabla_{\bm X} \bm V : \left( J \bm F^{-T} \right) d\Omega_{\bm X}, \displaybreak[2]\\
& 0 = \mathbf B^m\left( \bm W_{\bm V}; \dot{\bm Y}, \bm Y \right) := \int_{\Omega_{\bm X}} \bm W_{\bm V} \cdot \rho_0 \frac{d\bm V}{dt} + \nabla_{\bm X} \bm W_{\bm V} : \left( J \bm \sigma_{dev} \bm F^{-T} \right) - \nabla_{\bm X} \bm W_{\bm V} : \left( J \bm F^{-T} \right) P\displaybreak[2] \nonumber \\
& \hspace{5cm} - \bm W_{\bm V}\cdot \rho_0 \bm B d\Omega_{\bm X} - \int_{\Gamma_{\bm X}^H} \bm W_{\bm V} \cdot \bm H d\Gamma_{\bm X}.
\end{align}
for $\forall \left\lbrace  W_{P}, \bm W_{\bm V}\right\rbrace \in \mathcal V_P \times \mathcal V_{\bm V}$, with $\bm Y(0) = \left\lbrace \bm U_{0}, P_{0}, \bm V_{0} \right\rbrace^T$. Here $\bm U_0$, $P_0$, and $\bm V_0$ are the $\mathcal L^2$ projections of the initial data onto the spaces $\mathcal S_{\bm U}$, $\mathcal S_{P}$, and $\mathcal S_{\bm V}$ respectively.

\section{Numerical formulation}
\label{sec:numerical_formulation}
In this section, we discuss the numerical procedures for the solution of the incompressible hyper-elastodynamics based on the weak formulation given in Section \ref{subsec:weak_formulation}.

\subsection{Spline spaces on the parametric domain}
We start by reviewing the construction of B-splines and NURBS basis functions. Given the polynomial degree $\mathsf p$ and the dimensionality of the B-spline space $\mathsf n$, the knot vector can be represented by $\Xi := \left\{\xi_1, \cdots, \xi_{\mathsf n + \mathsf p + 1} \right\}$, wherein $0=\xi_1 \leq \xi_2 \leq \cdots \leq \xi_{\mathsf n + \mathsf p + 1}=1$.
With the knot vector, the B-spline basis functions of degree $\mathsf p$, denoted as $\mathsf N_{i}^{\mathsf p}$ for $i=1,\cdots, \mathsf n$, can be defined recursively. The definition starts with the case of $\mathsf p=0$, in which the basis functions are defined as piecewise constants,
\begin{align*}
\mathsf N_{i}^0(\xi) = 
\begin{cases}
1 & \mbox{ if } \xi_{i} \leq \xi < \xi_{i+1}, \\
0 & \mbox{ otherwise}.
\end{cases}
\end{align*}
For $\mathsf p\geq 1$, the basis functions are defined through the Cox-de Boor recursion formula,
\begin{align*}
\mathsf N_{i}^{\mathsf p}(\xi)= \frac{\xi - \xi_{i}}{\xi_{i+ \mathsf p} - \xi_{i}} \mathsf N_{i}^{\mathsf p-1}(\xi) + \frac{\xi_{i+\mathsf p+1} - \xi}{\xi_{i+ \mathsf p+1} - \xi_{i+1}} \mathsf N_{i+1}^{\mathsf p-1}(\xi).
\end{align*}
The NURBS basis functions of degree $\mathsf p$ are defined by the B-spine basis functions and a weight vector $\{\mathsf w_1, \cdots , \mathsf w_{\mathsf n} \}$ as
\begin{align*}
\mathsf R_i^{\mathsf p}(\xi) := \frac{\mathsf w_i \mathsf N^{\mathsf p}_i(\xi)}{\sum_{j=1}^{n}\mathsf w_j \mathsf N^{\mathsf p}_j(\xi)}.
\end{align*}
If we ignore the repetitive knots, the knot vector can be defined by a vector $\{\zeta_1, \cdots, \zeta_{\mathsf m} \}$ representing the distinctive knots and a vector $\{r_1, \cdots, r_{\mathsf m}\}$ recording the corresponding knot multiplicities. In this work, we consider open knot vectors, meaning $r_1 = r_{\mathsf m} = \mathsf p+1$. We further assume that $r_i \leq \mathsf p$ for $i=2,\cdots, \mathsf m-1$. At the point $\zeta_i$, the B-spline basis functions have $\alpha_i := \mathsf p - r_i$ continuous derivatives. The vector
\begin{align*}
\bm \alpha := \{ \alpha_1, \alpha_2, \cdots, \alpha_{\mathsf m-1}, \alpha_m \} = \{ -1, \alpha_2, \cdots, \alpha_{\mathsf m-1}, -1 \}
\end{align*}
is referred to as the regularity vector. We adopt the notation
\begin{align*}
\bm \alpha - 1 := \{\alpha_1, \alpha_2 - 1, \cdots, \alpha_{\mathsf m-1}-1, \alpha_{\mathsf m}  \} = \{-1, \alpha_2 - 1, \cdots, \alpha_{\mathsf m-1}-1, -1 \}.
\end{align*}
When $\alpha_i$ takes the value $-1$, the basis functions are discontinuous at $\zeta_i$. The spaces $\mathcal N^{\mathsf p}_{\bm \alpha}$ and $\mathcal R^{\mathsf p}_{\bm \alpha}$ are defined as
\begin{align*}
\mathcal N^{\mathsf p}_{\bm \alpha} := \textup{span}\{\mathsf N_{i}^{\mathsf p}\}_{i=1}^n, \quad \mathcal R^{\mathsf p}_{\bm \alpha} := \textup{span}\{\mathsf R_{i}^{\mathsf p}\}_{i=1}^n .
\end{align*}
The notations $\mathcal N^{\mathsf p}_{\alpha}$ and $\mathcal R^{\mathsf p}_{\alpha}$ are used to indicate that $\alpha_i = \alpha$ for $i=2,\cdots, \mathsf m-1$, meaning the spline function spaces have continuity $C^{\alpha}$. The construction of multivariate B-spline and NURBS basis functions follows a tensor-product manner. Consider a unit cube $\hat{\Omega} := (0,1)^{d}$, which is referred to as the parametric domain. Given $\mathsf p_{l}$, $\mathsf n_{l}$ for $l=1,\cdots , d$, we denote the knot vectors as $\Xi_{\mathsf l} = \{\xi_{1,l}, \cdots , \xi_{\mathsf n_{l}+\mathsf p_{l}+1, l}\}$. Associated with each knot vector, the univariate B-spline basis functions $\mathsf N^{\mathsf p_{l}}_{i_{l},l}$ for $i_{l} = 1, \cdots, \mathsf n_{l}$ are defined. Consequently, the tensor-product B-spline basis functions can be defined as
\begin{align*}
\mathsf N^{\mathsf p_1, \cdots, \mathsf p_{d}}_{i_1, \cdots, i_{d}}(\xi_1, \cdots, \xi_d) := \mathsf N^{\mathsf p_1}_{i_1, 1}(\xi_1) \otimes \cdots \otimes \mathsf N^{\mathsf p_{d}}_{i_{d}, d}(\xi_d),  \mbox{ for } i_1 = 1,\cdots, \mathsf n_1, \quad \cdots, \quad i_{d} = 1, \cdots, \mathsf n_d.
\end{align*}
Given the weight vectors $\{\mathsf w_{1,l}, \cdots , \mathsf w_{\mathsf n,l}\}$ for $l=1,\cdots, d$, the univariate NURBS basis functions $\mathsf R^{\mathsf p_l}_{i_l,l}$ are defined. Correspondingly, the multivariate NURBS basis functions are defined as
\begin{align*}
\mathsf R^{\mathsf p_1,\cdots , \mathsf p_d}_{i_1,\cdots ,i_d}(\xi_1,\cdots ,\xi_d) := \mathsf R^{\mathsf p_1}_{i_1, 1}(\xi_1) \otimes \cdots \otimes \mathsf R^{\mathsf p_{d}}_{i_{d}, d}(\xi_d), \mbox{ for } i_1 = 1,\cdots, \mathsf n_1, \quad \cdots, \quad i_{d} = 1, \cdots, \mathsf n_d.
\end{align*}
The tensor product NURBS space is denoted as
\begin{align*}
\mathcal R^{\mathsf p_1,\cdots \mathsf p_d}_{\bm \alpha_1, \cdots ,\bm \alpha_d} := \mathcal R^{\mathsf p_1}_{\bm \alpha_1} \otimes \cdots \otimes \mathcal R^{\mathsf p_d}_{\bm \alpha_d} = \textup{span}\{ \mathsf R^{\mathsf p_1,\cdots ,\mathsf p_d}_{i_1, \cdots , i_d} \}_{i_1=1, \cdots , i_d =1}^{\mathsf n_1, \cdots , \mathsf n_d}.
\end{align*}

\subsection{Semi-discrete formulation and a priori estimates}
\label{subsec:semi_discrete_formulation}
In this work, we always consider three-dimensional problems (i.e. $d=3$). Two discrete function spaces $\hat{\mathcal S}_h$ and $\hat{\mathcal P}_h$ can be defined on $\hat{\Omega} = (0,1)^3$ as
\begin{align*}
\hat{\mathcal S}_h :=&
\mathcal R^{\mathsf p+\mathsf a, \mathsf p+\mathsf a, \mathsf p+\mathsf a}_{\bm \alpha_1+\mathsf b, \bm \alpha_2+\mathsf b, \bm \alpha_3+\mathsf b} \times \mathcal R^{\mathsf p+\mathsf a, \mathsf p+\mathsf a, \mathsf p+\mathsf a}_{\bm \alpha_1+\mathsf b, \bm \alpha_2+\mathsf b, \bm \alpha_3+\mathsf b} \times \mathcal R^{\mathsf p+\mathsf a, \mathsf p+\mathsf a, \mathsf p+\mathsf a}_{\bm \alpha_1+\mathsf b, \bm \alpha_2+\mathsf b, \bm \alpha_3+\mathsf b}, \\
\hat{\mathcal P}_h :=& \mathcal R^{\mathsf p, \mathsf p, \mathsf p}_{\bm \alpha_1, \bm \alpha_2, \bm \alpha_3},
\end{align*}
where $1 \leq \mathsf a $ and $0 \leq \mathsf b \leq \mathsf a$ are integers. We assume that the referential configuration of the body can be exactly parametrized by a geometrical mapping $\bm \psi : \hat{\Omega} \rightarrow \Omega_{\bm X}$. The discrete functions on the referential domain are defined through the pull-back operators,
\begin{align*}
& \mathcal S_h :=  \{ \bm w : \bm w \circ \bm \psi \in \hat{\mathcal S}_h \}, \quad \mathcal P_h := \{ q : q \circ \bm \psi \in \hat{\mathcal P}_h \}.
\end{align*}
This pair of elements can be viewed as a generalization of the Taylor-Hood element \cite{Hood1973}, where the polynomial degree and the continuity can achieve arbitrarily high order. With the discrete function spaces $\mathcal S_h$ and $\mathcal P_h$ defined, we define the trial solution spaces for the displacement, pressure, and velocity on the referential configuration as
\begin{align*}
\mathcal S_{\bm U_h} &= \Big\lbrace \bm U_h : \bm U_h(\cdot, t) \in \mathcal S_h, t \in [0,T], \quad \bm U_h(\cdot,t) = \bm G \mbox{ on } \Gamma_{\bm X}^{G}  \Big\rbrace , \displaybreak[2] \\
\mathcal S_{P_h} &= \Big\lbrace P_h : P_h(\cdot, t) \in \mathcal P_h, t \in [0,T] \Big\rbrace , \displaybreak[2] \\
 \mathcal S_{\bm V_h} &= \left\lbrace \bm V_h : \bm V_h(\cdot, t) \in \mathcal S_h, t \in [0,T], \quad \bm V_h(\cdot,t) = \frac{d\bm G}{dt} \mbox{ on } \Gamma_{\bm X}^{G}  \right\rbrace .
\end{align*}
Given the displacement $\bm U_h \in \mathcal S_{\bm U_h}$, one may obtain $\bm \varphi_h = \bm U_h(\bm X, t) + \bm X$. Consequently, the trial solution spaces for the displacement, pressure, and velocity on the current configuration can be defined as
\begin{align*}
\mathcal S_{\bm u_h} &= \Big\lbrace \bm u_h : \bm u_h \circ \bm \varphi_h \in \mathcal S_h, t \in [0,T], \bm u_h(\cdot,t) = \bm g \mbox{ on } \Gamma_{\bm x}^{g}  \Big\rbrace , \displaybreak[2] \\
\mathcal S_{p_h} &= \Big\lbrace p_h : p_h \circ \bm \varphi_h \in \mathcal P_h, t \in [0,T] \Big\rbrace , \displaybreak[2] \\
 \mathcal S_{\bm v_h} &= \left\lbrace \bm v_h : \bm v_h \circ \bm \varphi_h \in \mathcal S_h, t \in [0,T], \bm v_h(\cdot,t) = \frac{d\bm g}{dt} \mbox{ on } \Gamma_{\bm x}^{g}  \right\rbrace ,
\end{align*}
and the test function spaces are defined as
\begin{align*}
\mathcal V_{p_h} &= \Big\lbrace w_{p_h} : w_{p_h} \circ \bm \varphi_h \in \mathcal P_h, t \in [0,T]  \Big\rbrace , \displaybreak[2] \\
 \mathcal V_{\bm v_h} &= \Big\lbrace \bm w_{\bm v_h} : \bm w_{\bm v_h} \circ \bm \varphi_h \in \mathcal S_h, t \in [0,T], \bm w_{\bm v_h}(\cdot,t) = \bm 0 \mbox{ on } \Gamma_{\bm x}^{g} \Big\rbrace .
\end{align*}
The semi-discrete formulation can be stated as follows. Find $\bm y_h(t) := \left\lbrace  \bm u_h(t), p_h(t), \bm v_h(t)\right\rbrace^T \in \mathcal S_{\bm u_h} \times \mathcal S_{p_h} \times \mathcal S_{\bm v_h}$ such that for $t\in [0, T]$,
\begin{align}
\label{eq:kinematics_current}
& \bm 0 = \mathbf B^k\left( \dot{\bm y}_h, \bm y_h  \right) := \frac{d\bm u_h}{dt} - \bm v_h, \displaybreak[2]\\
\label{eq:mass_current}
& 0 = \mathbf B^p\left( w_{p_h}; \dot{\bm y}_h, \bm y_h  \right) := \int_{\Omega_{\bm x}^t} w_{p_h} \nabla_{\bm x} \cdot \bm v_h d\Omega_{\bm x}, \displaybreak[2] \\
\label{eq:momentum_current}
& 0 = \mathbf B^m\left( \bm w_{\bm v_h}; \dot{\bm y}_h, \bm y_h  \right) := \int_{\Omega_{\bm x}^t} \bm w_{\bm v_h} \cdot \rho(J_h) \frac{d\bm v_h}{dt} + \nabla_{\bm x} \bm w_{\bm v_h} : \bm \sigma^{dev} - \nabla_{\bm x} \cdot \bm w_{\bm v_h} p_h - \bm w_{\bm v_h} \cdot \rho(J_h)  \bm b d\Omega_{\bm x}  \nonumber \\
& \hspace{4.5cm}  - \int_{\Gamma_{\bm x}^{h,t}} \bm w_{\bm v_h} \cdot \bm h d\Gamma_{\bm x}, 
\end{align}
for $\forall \left\lbrace w_{p_h}, \bm w_{\bm v_h}\right\rbrace \in \mathcal V_{p_h} \times \mathcal V_{\bm v_h}$, with $\bm y_h(0) := \left\lbrace \bm u_{h0}, p_{h0}, \bm v_{h0} \right\rbrace^T$. Here $\bm u_{h0}$, $p_{h0}$, and $\bm v_{h0}$ are the $\mathcal L^2$ projections of the initial data onto the finite dimensional trial solution spaces. In the following, we demonstrate that the above semi-discrete formulation is embedded with energy stability and momentum conservation properties. The properties guarantee that the numerical solutions preserve critical structures of the original system. In contrast, to the best of the authors' knowledge, there is no such stability estimate for the conventional mixed $u/p$ formulation \cite{Sussman1987} or the formulations based on rate constitutive equations \cite{Hoffman2011,Idelsohn2008,Liu1986}. 

\begin{proposition}[A priori energy stability estimate]
\label{prop:energy_stability}
For fully incompressible materials, assuming the boundary data $\bm g$ is time independent, we have 
\begin{align}
\label{eq:semi_discrete_energy_stability}
\frac{d}{dt} \int_{\Omega_{\bm X}} \frac12 \rho_0 \|\bm V_h\|^2 + \rho_0 G_{ich}(\tilde{\bm C}_h) d\Omega_{\bm X} = \int_{\Omega_{\bm X}} \rho_0 \bm V_h \cdot \bm B d\Omega_{\bm X} + \int_{\Gamma_{\bm X}} \bm V_h \cdot \bm H d\Gamma_{\bm X}.
\end{align}
\end{proposition}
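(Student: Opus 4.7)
The plan is to derive \eqref{eq:semi_discrete_energy_stability} by testing the discrete momentum equation \eqref{eq:momentum_current} with $\bm w_{\bm v_h} = \bm v_h$ and the discrete mass equation \eqref{eq:mass_current} with $w_{p_h} = p_h$, then summing the two resulting identities. The choice $w_{p_h} = p_h$ is admissible because $\mathcal V_{p_h} = \mathcal S_{p_h}$. Since $\bm g$ is time-independent, $d\bm g/dt = \bm 0$ on $\Gamma_{\bm x}^g$, so $\bm v_h$ vanishes there and lies in $\mathcal V_{\bm v_h}$. Summing makes the pressure-coupling term $-\int_{\Omega_{\bm x}^t} \nabla_{\bm x} \cdot \bm v_h \, p_h \, d\Omega_{\bm x}$ from the momentum equation cancel against $\int_{\Omega_{\bm x}^t} p_h \nabla_{\bm x} \cdot \bm v_h \, d\Omega_{\bm x}$ from the mass equation.

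Next I would pull back each surviving integral to the referential configuration using $d\Omega_{\bm x} = J_h \, d\Omega_{\bm X}$ together with the mass-conservation identity $\rho(J_h) J_h = \rho_0$. The inertial contribution becomes
\[
\int_{\Omega_{\bm x}^t} \bm v_h \cdot \rho(J_h) \frac{d\bm v_h}{dt} \, d\Omega_{\bm x} = \int_{\Omega_{\bm X}} \rho_0 \bm V_h \cdot \frac{d\bm V_h}{dt} \, d\Omega_{\bm X} = \frac{d}{dt} \int_{\Omega_{\bm X}} \frac12 \rho_0 \|\bm V_h\|^2 \, d\Omega_{\bm X},
\]
while analogous pull-backs send the body force term to $\int_{\Omega_{\bm X}} \rho_0 \bm V_h \cdot \bm B \, d\Omega_{\bm X}$ and the traction term to $\int_{\Gamma_{\bm X}^H} \bm V_h \cdot \bm H \, d\Gamma_{\bm X}$. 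The latter can be extended to the full boundary $\Gamma_{\bm X}$ since $\bm V_h$ vanishes on $\Gamma_{\bm X}^G$.

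The main obstacle is identifying the remaining deviatoric stress power $\int_{\Omega_{\bm x}^t} \nabla_{\bm x} \bm v_h : \bm \sigma^{dev} \, d\Omega_{\bm x}$ with the time rate of the isochoric potential energy. I would first pull it back using $\nabla_{\bm x} \bm v_h = \nabla_{\bm X} \bm V_h \bm F_h^{-1}$ to obtain $\int_{\Omega_{\bm X}} (J_h \bm \sigma^{dev} \bm F_h^{-T}) : \nabla_{\bm X} \bm V_h \, d\Omega_{\bm X}$. Using the definition of $\bm \sigma^{dev}$ and $\tilde{\bm F}_h = J_h^{-1/3} \bm F_h$, one checks that $J_h \bm \sigma^{dev} \bm F_h^{-T} = \bm F_h \bm S^{dev}_h$ with $\bm S^{dev}_h := J_h^{-2/3}(\mathbb P : \tilde{\bm S}_h)$. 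A chain-rule computation applied to $\tilde{\bm C}_h = J_h^{-2/3} \bm C_h$ identifies $\bm S^{dev}_h$ with $2\,\partial(\rho_0 G_{ich}(\tilde{\bm C}_h))/\partial \bm C_h$. The discrete kinematic equation \eqref{eq:kinematics_current} gives $d\bm F_h/dt = \nabla_{\bm X} \bm V_h$, hence $d\bm C_h/dt = 2\,\textup{sym}(\bm F_h^T \nabla_{\bm X} \bm V_h)$, and therefore
\[
\frac{d}{dt} \bigl( \rho_0 G_{ich}(\tilde{\bm C}_h) \bigr) = \frac12 \bm S^{dev}_h : \frac{d\bm C_h}{dt} = (\bm F_h \bm S^{dev}_h) : \nabla_{\bm X} \bm V_h.
\]
Integrating this identity in space converts the stress power into $\frac{d}{dt} \int_{\Omega_{\bm X}} \rho_0 G_{ich}(\tilde{\bm C}_h) \, d\Omega_{\bm X}$, and collecting all transformed terms produces \eqref{eq:semi_discrete_energy_stability}.
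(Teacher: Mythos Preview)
Your proposal is correct and follows essentially the same approach as the paper: test \eqref{eq:mass_current} with $p_h$ and \eqref{eq:momentum_current} with $\bm v_h$, add to cancel the pressure coupling, and pull back to $\Omega_{\bm X}$. The only cosmetic difference is that the paper writes the stress power directly as $\dfrac{d\bm F_h}{dt} : \dfrac{\partial(\rho_0 G_{ich})}{\partial \bm F}$, whereas you route the same chain-rule computation through $\bm C_h$ and $\bm S^{dev}_h = 2\,\partial(\rho_0 G_{ich})/\partial \bm C_h$; both identifications are equivalent, and your version spells out the intermediate steps (admissibility of the test functions, extension of the boundary integral, use of \eqref{eq:kinematics_current}) that the paper leaves implicit.
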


\begin{proof}
Since the Dirichlet boundary data $\bm g$ is independent of time, one is allowed to choose $w_{p_h} = p_h$ in \eqref{eq:mass_current} and $\bm w_{\bm u_h} = \bm v_h$ in \eqref{eq:momentum_current}, and this leads to the following,
\begin{align*}
0 =& \mathbf B^p\left( p_h; \dot{\bm y}_h, \bm y_h  \right) + \mathbf B^m\left( \bm v_h; \dot{\bm y}_h, \bm y_h  \right) \nonumber \\
=& \int_{\Omega_{\bm x}^t} p_h \nabla_{\bm x} \cdot \bm v_h d\Omega_{\bm x} + \int_{\Omega_{\bm x}^t} \bm v_h \cdot \rho(J_h) \frac{d\bm v_h}{dt} + \nabla_{\bm x} \bm v_h : \bm \sigma^{dev} - \nabla_{\bm x} \cdot \bm v_h p_h - \bm v_h \cdot \rho(J_h)  \bm b d\Omega_{\bm x} \nonumber \\
&  - \int_{\Gamma_{\bm x}^{h,t}} \bm v_h \cdot \bm h d\Gamma_{\bm x} \\
=& \frac{d}{dt}\int_{\Omega_{\bm X}} \frac12 \rho_0 \|\bm V_h\|^2 d\Omega_{\bm X} + \int_{\Omega_{\bm X}} \frac{d}{dt}\bm F_h : \frac{d\left( \rho_0 G_{ich} ( \tilde{\bm C}_h ) \right)}{d \bm F} - \bm V_h \cdot \rho_0  \bm B d\Omega_{\bm X} - \int_{\Gamma_{\bm X}} \bm V_h \cdot \bm H d\Gamma_{\bm X}.
\end{align*}
Rearranging terms in the above equality leads to
\begin{align*}
\frac{d}{dt} \int_{\Omega_{\bm X}} \frac12 \rho_0 \|\bm V_h\|^2 + \rho_0 G_{ich}(\tilde{\bm C}_h) d\Omega_{\bm X} = \int_{\Omega_{\bm X}} \rho_0 \bm V_h \cdot \bm B d\Omega_{\bm X} + \int_{\Gamma_{\bm X}} \bm V_h \cdot \bm H d\Gamma_{\bm X}.
\end{align*}
\end{proof}
\begin{remark}
For compressible materials, one may analogously obtain a stability bound where a pressure-squared term enters into the integral on the left-hand side of \eqref{eq:semi_discrete_energy_stability}. This gives a mathematical reason for the success of equal-order interpolations when the material is compressible. However, we do not favor this type of `energy' estimates because the pressure-squared term does not carry physical meanings. To remedy this issue, an entropy variable can be introduced by leveraging the convexity of the volumetric energy, and a physically relevant entropy stability is expected \cite{Liu2013,Shakib1991}. This is beyond the scope of this work and remains an area of future research.
\end{remark}

\begin{proposition}[Semi-discrete momentum conservation]
Considering the pure Neumann boundary condition, we have the following conservation properties of the semi-discrete formulation,
\begin{align*}
\frac{d}{dt}\int_{\Omega_{\bm X}} \rho_0 \bm V_h d\Omega_{\bm X} =& \int_{\Omega_{\bm X}} \rho_0 \bm B d\Omega_{\bm X} + \int_{\Gamma_{\bm X}} \bm H d\Gamma_{\bm X}, \\
\frac{d}{dt}\int_{\Omega_{\bm X}} \rho_0 \bm \varphi_h \times \bm V_h d\Omega_{\bm X} =& \int_{\Omega_{\bm X}} \rho_0 \bm \varphi_h \times \bm B d\Omega_{\bm X} + \int_{\Gamma_{\bm X}} \bm \varphi_h \times \bm H d\Gamma_{\bm X}.
\end{align*}
\end{proposition}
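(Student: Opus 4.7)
The plan is to mimic the classical trick from continuum mechanics: to derive each conservation law, substitute a specially chosen test function into the semi-discrete momentum balance \eqref{eq:momentum_current} whose gradient kills the internal-stress contributions, leaving only inertia, body force, and traction. Since the boundary is pure Neumann, $\mathcal{V}_{\bm v_h}$ carries no essential constraint, so both constant fields and rigid rotation fields are admissible test functions (the latter requires an additional comment below). Throughout I will move between the current and referential descriptions via $\rho(J_h)\, d\Omega_{\bm x} = \rho_0\, d\Omega_{\bm X}$ and the surface pull-back defining $\bm B$ and $\bm H$.

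For the linear momentum identity, I take $\bm w_{\bm v_h} = \bm c$ for an arbitrary constant vector $\bm c \in \mathbb R^3$. Then $\nabla_{\bm x} \bm w_{\bm v_h} = \bm 0$, so the deviatoric stress and pressure terms in $\mathbf B^m$ drop out identically. What survives is
\begin{align*}
\bm c \cdot \int_{\Omega_{\bm X}} \rho_0 \frac{d\bm V_h}{dt}\, d\Omega_{\bm X}
= \bm c \cdot \int_{\Omega_{\bm X}} \rho_0 \bm B\, d\Omega_{\bm X}
+ \bm c \cdot \int_{\Gamma_{\bm X}} \bm H\, d\Gamma_{\bm X}.
\end{align*}
Since $\rho_0 d\Omega_{\bm X}$ is time-independent I can pull $d/dt$ outside the integral, and arbitrariness of $\bm c$ yields the stated identity.

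For angular momentum, I choose $\bm w_{\bm v_h} = \bm c \times \bm x$ (equivalently, $\bm c \times \bm \varphi_h$ in the referential picture). Three facts make the argument work: (i) $(\nabla_{\bm x}(\bm c \times \bm x))_{ij} = \epsilon_{ikj}c_k$ is antisymmetric in $(i,j)$, so its contraction with the symmetric Cauchy stress $\bm\sigma^{dev}$ vanishes; (ii) $\nabla_{\bm x}\cdot(\bm c\times\bm x)=0$, killing the pressure term; and (iii) $d(\bm\varphi_h\times\bm V_h)/dt = \bm V_h\times\bm V_h + \bm\varphi_h\times d\bm V_h/dt = \bm\varphi_h\times d\bm V_h/dt$, allowing the time derivative to be pulled outside. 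What remains is exactly $\bm c \cdot \big(d/dt\int \rho_0 \bm\varphi_h\times\bm V_h\,d\Omega_{\bm X} - \int \rho_0 \bm\varphi_h\times\bm B\,d\Omega_{\bm X} - \int \bm\varphi_h\times\bm H\,d\Gamma_{\bm X}\big)=0$, and arbitrariness of $\bm c$ finishes the proof.

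The one genuinely non-routine point, and the step I expect to need a sentence of justification, is the admissibility of the rotational test function in the discrete space. I need $\bm c \times \bm \varphi_h \in \mathcal V_{\bm v_h}$, which reduces to showing $\bm\varphi_h = \bm X + \bm U_h$ lies in $\mathcal S_h$ componentwise. Since isogeometric analysis employs an isoparametric geometric mapping $\bm\psi\in\hat{\mathcal S}_h$, the referential coordinate function $\bm X$ pulls back to $\bm\psi$ and therefore belongs to $\mathcal S_h$; combined with $\bm U_h\in\mathcal S_h$, this gives $\bm\varphi_h\in\mathcal S_h$ and, by linearity, $\bm c\times\bm\varphi_h\in\mathcal V_{\bm v_h}$ under the pure Neumann hypothesis. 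Every other step is a direct consequence of index gymnastics and the change-of-variable identity $\rho(J_h)d\Omega_{\bm x}=\rho_0 d\Omega_{\bm X}$.
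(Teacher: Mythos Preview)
Your proposal is correct and follows exactly the paper's approach: the paper's proof is a single sentence stating that the two identities follow by choosing $\bm w_{\bm v_h} = \bm e_i$ and $\bm w_{\bm v_h} = \bm e_i \times \bm \varphi_h$ in \eqref{eq:momentum_current}, which is equivalent to your choice of arbitrary constant $\bm c$. Your additional justification of the admissibility of $\bm c \times \bm\varphi_h$ in the discrete test space via the isoparametric property is a detail the paper leaves implicit.
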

\begin{proof}
The above conservation properties are direct consequences of choosing $\bm w_{\bm v_h} = \bm e_i$ and $\bm w_{\bm v_h} = \bm e_i \times \bm \varphi_h$ respectively in \eqref{eq:momentum_current}, where $\bm e_i$ is a unit vector in the $i$-th direction.
\end{proof}

Due to the incompressibility, the pressure force does not contribute to the energy. Therefore, the energy stability estimate \eqref{eq:semi_discrete_energy_stability} does not involve the pressure field. The inf-sup condition needs to be utilized to provide a bound for the pressure field. We assume that there exists a positive constant $\beta$ such that
\begin{align}
\label{eq:inf_sup_cond}
\inf_{p_h \in \mathcal S_{p_h}} \sup_{\bm v_h \in \mathcal S_{\bm v_{h}}} \frac{ \int_{\Omega_{\bm x}} p_h \nabla_{\bm x} \cdot \bm v_h d\Omega_{\bm x}  }{\|\bm v_h\|_{1}\|p_h\|_{0}} \geq \beta,
\end{align}
wherein $\|\cdot\|_0$ and $\|\cdot\|_1$ denote the $\mathcal L^2$ and $\mathcal H^1$ norm over $\Omega_{\bm x}$. Using the semi-discrete equation \eqref{eq:momentum_current}, the above inequality implies
\begin{align*}
\beta \|p_h\|_{0} &\leq \sup_{\hat{\bm v}_h \in \mathcal S_{\bm v_{h}}} \frac{\int_{\Omega_{\bm x}} p_h \nabla_{\bm x} \cdot \hat{\bm v}_h d\Omega_{\bm x}}{ \|\hat{\bm v}_h\|_{1} } \\
&= \sup_{\hat{\bm v}_h \in \mathcal S_{\bm v_{h}}} \frac{\int_{\Omega_{\bm x}} \rho(J_h) \hat{\bm v}_h \cdot \frac{d\bm v_h}{dt} + \nabla_{\bm x} \hat{\bm v}_h : \bm \sigma^{dev} - \rho(J_h) \hat{\bm v}_h \cdot \bm b  d\Omega_{\bm x} + \int_{\Gamma_{\bm x}^{h,t}} \hat{\bm v}_h \cdot \bm h d\Gamma_{\bm x}  }{ \|\hat{\bm v}_h\|_{1} }.
\end{align*}
If we further assume that $\rho(J_h)$ is uniformly bounded, using the Cauchy-Schwarz  inequality, we may get 
\begin{align*}
\|p_h\|_{0} \leq \tilde{C} \left( \| \frac{d\bm v_h}{dt} \|_{L_2(\Omega_{\bm x})} + \|\bm \sigma^{dev}\|_{L_2(\Omega_{\bm x})} + \|\bm b\|_{L_2(\Omega_{\bm x})} + \|\bm h\|_{L_2(\Gamma_{\bm x}^{h})} \right),
\end{align*}
with $\tilde{C}$ being a constant. Therefore, given the velocity, the deformation state, and the external forces, the pressure field is bounded. We note that the assumption on the boundedness of the density cannot be rigorously justified based on the current numerical formulation. It is anticipated that this issue can be resolved by invoking the structure-preserving discretization technique \cite{Evans2013}, which results in discrete solutions with pointwise divergence-free velocity field. With the exact satisfaction of the incompressibility constraint, the density remains as a constant.
\begin{remark}
The linearization of $J-1=0$ results in a divergence operator acting on the virtual displacement field. This fact has been frequently used to justify the usage of inf-sup stable elements in the two-field variational principle \cite{Auricchio2005}. However, we feel this may not be a good interpretation. First, the linearization argument cannot recover the compressible case \eqref{eq:small_strain_mass}. Second, the solvability of the Newton-Raphson procedure does not provide a bound for the solution.
\end{remark}

\subsection{Temporal discretization}
\label{subsec:temporal_discretization}
We invoke the generalized-$\alpha$ method \cite{Jansen2000} for the temporal discretization of the weak form problem \eqref{eq:mix_solids_kinematics_current}-\eqref{eq:mix_solids_momentum_current}. The time interval $[0,T]$ is divided into a set of $n_{ts}$ subintervals of size $\Delta t_n := t_{n+1} - t_n$ delimited by a discrete time vector $\left\lbrace t_n \right\rbrace_{n=0}^{n_{ts}}$. The solution vector and its first-order time derivative evaluated at the time step $t_n$ are denoted as $\bm y_n$ and $\dot{\bm y}_n$. The fully discrete scheme can be stated as follows. At time step $t_n$, given $\dot{\bm y}_n$, $\bm y_n$, the time step size $\Delta t_n$, and the parameters $\alpha_m$, $\alpha_f$, and $\gamma$, find $\dot{\bm y}_{n+1}$ and $\bm y_{n+1}$ such that for $\forall \left\lbrace  w_{p}, \bm w_{\bm v}\right\rbrace \in \mathcal V_p \times \mathcal V_{\bm v}$,
\begin{align}
\label{eq:gen_alpha_fully_discrete_kinematic_eqn}
& \mathbf B^k_{t_{n+\alpha_f}}\left( \dot{\bm y}_{n+\alpha_m}, \bm y_{n+\alpha_f} \right) = \bm 0, \displaybreak[2] \\
\label{eq:gen_alpha_fully_discrete_pressure_rate}
& \mathbf B^p_{t_{n+\alpha_f}}\left( w_{p} ;  \dot{\bm y}_{n+\alpha_m}, \bm y_{n+\alpha_f} \right) = 0, \displaybreak[2] \\
\label{eq:gen_alpha_fully_discrete_linear_momentum}
& \mathbf B^m_{t_{n+\alpha_f}}\left( \bm w_{\bm v} ;  \dot{\bm y}_{n+\alpha_m}, \bm y_{n+\alpha_f} \right) = 0, \displaybreak[2] \\
\label{eq:gen_alpha_def_y_n_plus_1}
& \bm y_{n+1} = \bm y_{n} + \Delta t_n \dot{\bm y}_n, + \gamma \Delta t_n \left( \dot{\bm y}_{n+1} - \dot{\bm y}_{n}\right), \displaybreak[2] \\
\label{eq:gen_alpha_def_y_n_alpha_m}
& \dot{\bm y}_{n+\alpha_m} = \dot{\bm y}_{n} + \alpha_m \left(\dot{\bm y}_{n+1} - \dot{\bm y}_{n} \right), \displaybreak[2] \\
\label{eq:gen_alpha_def_y_n_alpha_f}
& \bm y_{n+\alpha_f} = \bm y_{n} + \alpha_f \left( \bm y_{n+1} - \bm y_{n} \right).
\end{align}
The choice of parameters $\alpha_m$, $\alpha_f$ and $\gamma$ determines the accuracy and stability of the temporal scheme. Importantly, the high-frequency dissipation can be controlled via a proper parametrization of these parameters, while maintaining second-order accuracy and unconditional stability (for linear problems). For the above first-order dynamic problems, the parametrization is
\begin{align*}
\alpha_m = \frac12 \left( \frac{3-\varrho_{\infty}}{1+\varrho_{\infty}} \right), \quad \alpha_f = \frac{1}{1+\varrho_{\infty}}, \quad \gamma = \frac{1}{1+\varrho_{\infty}},
\end{align*}
wherein $\varrho_{\infty} \in [0,1]$ denotes the spectral radius of the amplification matrix at the highest mode \cite{Jansen2000}. Setting $\varrho_{\infty} = 1$ recovers the mid-point rule. For nonlinear structural dynamics, the mid-point rule is observed to have a pile-up effect for the energy error and often leads to diverged results for long-time simulations. In this study, the value of $\varrho_{\infty}$ is fixed to be $0.5$.
\begin{remark}
Interested readers are referred to \cite{Chung1993} for the parametrization of $\alpha_m$, $\alpha_f$, and $\gamma$ for second-order structural dynamics. A recent study shows that using the generalized-$\alpha$ method for first-order structural dynamics enjoys improved dissipation and dispersion properties and does not suffer from overshoot \cite{Kadapa2017}. Moreover, using a first-order structural dynamic model is quite propitious for the design of an FSI scheme \cite{Liu2018}.
\end{remark}

\begin{remark}
It is tempting to apply the discrete energy-momentum methods \cite{Simo1992a} to the semi-discrete system. Those algorithms yield fully discrete systems that inherit the energy stability and momentum conservation properties and are thence particularly well-suited for transient analysis. For problems we are interested in, the solution may be driven to a static equilibrium by external forces, and the stress formula in the energy-momentum methods will become ill-defined. Because of this, we retain the generalized-$\alpha$ method in this work.
\end{remark}

\subsection{A Segregated predictor multi-corrector algorithm}
\label{subsec:seg_algorithm}
The equations \eqref{eq:gen_alpha_fully_discrete_kinematic_eqn}-\eqref{eq:gen_alpha_def_y_n_alpha_f} constitute a system of nonlinear algebraic equations to be solved in each time step, and we invoke the Newton-Raphson method with consistent linearization. At time step $t_{n+1}$, the solution vector $\bm y_{n+1}$ is solved by means of a predictor multi-corrector algorithm. We denote $\bm y_{n+1,(l)} := \left\lbrace \bm u_{n+1,(l)}, p_{n+1,(l)}, \bm v_{n+1,(l)} \right\rbrace^T$ as the solution vector at the Newton-Raphson iteration step $l=0, \cdots, l_{max}$. The residual vectors evaluated at the iteration stage $l$ are denoted as
\begin{align*}
\boldsymbol{\mathrm R}_{(l)} &:= \left\lbrace \boldsymbol{\mathrm R}^{k}_{(l)}, \boldsymbol{\mathrm R}^p_{(l)}, \boldsymbol{\mathrm R}^m_{(l)} \right\rbrace^T, \\
\boldsymbol{\mathrm R}^k_{(l)} &:= \boldsymbol{\mathrm R}^k\left( \dot{\bm y}_{n+\alpha_m, (l)}, \bm y_{n+\alpha_f, (l)} \right), \nonumber \\
\boldsymbol{\mathrm R}^p_{(l)} &:= \boldsymbol{\mathrm R}^p\left( \dot{\bm y}_{n+\alpha_m, (l)}, \bm y_{n+\alpha_f, (l)} \right), \nonumber \\
\boldsymbol{\mathrm R}^m_{(l)} &:= \boldsymbol{\mathrm R}^m\left( \dot{\bm y}_{n+\alpha_m, (l)}, \bm y_{n+\alpha_f, (l)} \right). \nonumber
\end{align*}
The consistent tangent matrix associated with the above residual vectors is
\begin{align*}
\boldsymbol{\mathrm K}_{(l)} =
\begin{bmatrix}
\boldsymbol{\mathrm K}^k_{(l), \dot{\bm u}} & \boldsymbol{\mathrm O} & \boldsymbol{\mathrm K}^k_{(l), \dot{\bm v}} \\[0.3mm]
\boldsymbol{\mathrm K}^p_{(l), \dot{\bm u}} & \boldsymbol{\mathrm O} & \boldsymbol{\mathrm K}^p_{(l), \dot{\bm v}} \\[0.3mm]
\boldsymbol{\mathrm K}^m_{(l), \dot{\bm u}} & \boldsymbol{\mathrm K}^m_{(l), \dot{p}} & \boldsymbol{\mathrm K}^m_{(l), \dot{\bm v}}
\end{bmatrix},
\end{align*}
wherein
\begin{align*}
& \boldsymbol{\mathrm K}^k_{(l), \dot{\bm u}} := \alpha_m \frac{\partial \boldsymbol{\mathrm R}^k_{(l)}\left( \dot{\bm y}_{n+\alpha_m, (l)}, \bm y_{n+\alpha_f, (l)} \right)}{\partial \dot{\bm u}_{n+\alpha_m}} = \alpha_m \boldsymbol{\mathrm I}, \displaybreak[2] \\
& \boldsymbol{\mathrm K}^k_{(l), \dot{\bm v}} := \alpha_f \gamma \Delta t_n \frac{\partial \boldsymbol{\mathrm R}^k_{(l)}\left( \dot{\bm y}_{n+\alpha_m, (l)}, \bm y_{n+\alpha_f, (l)} \right)}{\partial \bm v_{n+\alpha_f}} = -\alpha_f \gamma \Delta t_n \boldsymbol{\mathrm I},
\end{align*}
$\boldsymbol{\mathrm I}$ is the identity matrix, and $\boldsymbol{\mathrm O}$ is the zero matrix. The above diagonal structure of the two blocks can be utilized to construct a block factorization of $\boldsymbol{\mathrm K}_{(l)}$, with which the solution procedure of the linear system of equations in the Newton-Raphson method can be consistently reduced to a two-stage algorithm \cite{Liu2018,Scovazzi2016}. In the first stage, one obtains the increments of the pressure and velocity at the iteration step $l$ by solving the following linear system,
\begin{align}
\label{eq:NR_smaller_eqn_1}
&
\begin{bmatrix}
\boldsymbol{\mathrm K}^m_{(l), \dot{\bm v}} +  \frac{\alpha_f \gamma \Delta t_n}{\alpha_m} \boldsymbol{\mathrm K}^m_{(l), \dot{\bm u}} & \boldsymbol{\mathrm K}^m_{(l), \dot{p}} \\[0.3mm]
\boldsymbol{\mathrm K}^p_{(l), \dot{\bm v}} + \frac{\alpha_f \gamma \Delta t_n}{\alpha_m} \boldsymbol{\mathrm K}^p_{(l), \dot{\bm u}} & \boldsymbol{\mathrm O}
\end{bmatrix}
\begin{bmatrix}
\Delta \dot{\bm v}_{n+1,(l)} \\[0.3mm]
\Delta \dot{p}_{n+1,(l)}
\end{bmatrix}
= -
\begin{bmatrix}
\boldsymbol{\mathrm R}^m_{(l)}- \frac{1}{\alpha_m}  \boldsymbol{\mathrm K}^m_{(l), \dot{\bm u}} \boldsymbol{\mathrm R}^k_{(l)} \\[0.3mm]
\boldsymbol{\mathrm R}^p_{(l)} - \frac{1}{\alpha_m}  \boldsymbol{\mathrm K}^p_{(l), \dot{\bm u}}\boldsymbol{\mathrm R}^k_{(l)}
\end{bmatrix}.
\end{align}
In the second stage, one obtains the increments for the displacement by
\begin{align}
\label{eq:NR_smaller_eqn_2}
& \Delta \dot{\bm u}_{n+1,(l)} = \frac{\alpha_f \gamma \Delta t_n}{\alpha_m} \Delta \dot{\bm v}_{n+1,(l)} - \frac{1}{\alpha_m}\boldsymbol{\mathrm R}^k_{(l)}.
\end{align}
To simplify notations in the following discussion, we denote 
\begin{align}
\label{eq:def_matrix_ABC}
\boldsymbol{\mathrm A}_{(l)} := \boldsymbol{\mathrm K}^m_{(l), \dot{\bm v}} +  \frac{\alpha_f \gamma \Delta t_n}{\alpha_m} \boldsymbol{\mathrm K}^m_{(l), \dot{\bm u}}, \quad
\boldsymbol{\mathrm B}_{(l)} := \boldsymbol{\mathrm K}^m_{(l), \dot{p}}, \quad
\boldsymbol{\mathrm C}_{(l)} := \boldsymbol{\mathrm K}^p_{(l), \dot{\bm v}} + \frac{\alpha_f \gamma \Delta t_n}{\alpha_m} \boldsymbol{\mathrm K}^p_{(l), \dot{\bm u}}.
\end{align}
Readers are referred to the Appendix of \cite{Liu2019} for the explicit formulas of the block matrices in \eqref{eq:def_matrix_ABC}.
\begin{remark}
In \cite{Liu2018}, it was shown that $\boldsymbol{\mathrm R}^k_{(l)} = \bm 0$ for $l \geq 2$ for general predictor multi-corrector algorithms; in \cite{Rossi2016}, a special predictor is chosen so that $\boldsymbol{\mathrm R}^k_{(l)} = \bm 0$ for $l \geq 1$. In our experience, setting $\boldsymbol{\mathrm R}^k_{(l)} = \bm 0$ for  $l \geq 1$, regardless of the predictor chosen, simplifies the implementation and does not deteriorate the convergence rate of the Newton-Raphson solution procedure.
\end{remark}
Based on the above discussion, a predictor multi-corrector algorithm for solving the nonlinear algebraic equations in each time step can be summarized as follows.

\noindent \textbf{Predictor stage}: Set:
\begin{align*}
\bm y_{n+1, (0)} = \bm y_{n}, \quad
\dot{\bm y}_{n+1, (0)} = \frac{\gamma - 1}{\gamma} \dot{\bm y}_{n}.
\end{align*}

\noindent \textbf{Multi-corrector stage}:
Repeat the following steps for \(l = 1, \dots, l_{max}\):
\begin{enumerate}
\item Evaluate the solution vectors at the intermediate stages:
\begin{align*}
\bm y_{n+\alpha_f, (l)} = \bm y_n + \alpha_f \left( \bm y_{n+1,(l-1)} - \bm y_n \right), \quad
\dot{\bm y}_{n+\alpha_m, (l)} = \dot{\bm y}_n + \alpha_m \left( \dot{\bm y}_{n+1,(l-1)} - \dot{\bm y}_n \right).
\end{align*}

\item Assemble the residual vectors $\boldsymbol{\mathrm R}^m_{(l)}$ and $\boldsymbol{\mathrm R}^p_{(l)}$ using $\bm y_{n+\alpha_f, (l)}$ and $\dot{\bm y}_{n+\alpha_m, (l)}$.

\item Let $\|\boldsymbol{\mathrm R}_{(l)}\|_{\mathfrak{l}^2}$ denote the $\mathfrak l^2$-norm of the residual vector. If either one of the following stopping criteria 
\begin{align*}
& \frac{\|\boldsymbol{\mathrm R}_{(l)}\|_{\mathfrak l^2}}{\|\boldsymbol{\mathrm R}_{(0)}\|_{\mathfrak l^2}} \leq \textup{tol}_{\textup{R}}, \qquad \|\boldsymbol{\mathrm R}_{(l)}\|_{\mathfrak l^2} \leq \textup{tol}_{\textup{A}},
\end{align*}
is satisfied for two prescribed tolerances $\textup{tol}_{\textup{R}}$, $\textup{tol}_{\textup{A}}$, set the solution vector at the time step $t_{n+1}$ as $\bm y_{n+1} = \bm y_{n+1, (l-1)}$ and $\dot{\bm y}_{n+1} = \dot{\bm y}_{n+1, (l-1)}$, and exit the multi-corrector stage; otherwise, continue to step 4.

\item Assemble the tangent matrices \eqref{eq:def_matrix_ABC}.

\item Solve the following linear system of equations for $\Delta \dot{p}_{n+1,(l)}$ and $\Delta \dot{\bm v}_{n+1,(l)}$,
\begin{align}
\label{eq:pre-multi-corrector-stage-5-matrix-problems}
\begin{bmatrix}
\boldsymbol{\mathrm A}_{(l)} & \boldsymbol{\mathrm B}_{(l)} \\[0.3mm]
\boldsymbol{\mathrm C}_{(l)} & \boldsymbol{\mathrm O}
\end{bmatrix}
\begin{bmatrix}
\Delta \dot{\bm v}_{n+1,(l)} \\[0.3mm]
\Delta \dot{p}_{n+1,(l)}
\end{bmatrix}
= -
\begin{bmatrix}
\boldsymbol{\mathrm R}^m_{(l)} \\[0.3mm]
\boldsymbol{\mathrm R}^p_{(l)}
\end{bmatrix}.
\end{align}

\item Obtain $\Delta \dot{\bm u}_{n+1,(l)}$ from the relation \eqref{eq:NR_smaller_eqn_2}.

\item Update the solution vector as
\begin{align*}
\bm y_{n+1,(l)} = \bm y_{n+1,(l)} + \gamma \Delta t_n \Delta \dot{\bm y}_{n+1,(l)}, \quad \dot{\bm y}_{n+1,(l)} = \dot{\bm y}_{n+1,(l)} + \Delta \dot{\bm y}_{n+1,(l)}.
\end{align*}
and return to step 1.
\end{enumerate}
In our experience, the choice of the linear solver for \eqref{eq:pre-multi-corrector-stage-5-matrix-problems} critically impacts the overall numerical efficiency and robustness, especially for three-dimensional problems. Linear solvers based on algebraic factorizations (such as incomplete LU) are prone to fail due to the appearance of a zero sub-matrix $\boldsymbol{\mathrm O}$ in \eqref{eq:pre-multi-corrector-stage-5-matrix-problems}, which may lead to zero-pivoting. Hence, an iterative solution procedure for \eqref{eq:pre-multi-corrector-stage-5-matrix-problems} is specifically designed based on a nested block preconditioning technique. Readers are referred to \cite{Liu2019} for more details.

\section{Numerical results}
\label{sec:numerical_results}
In this section, we perform numerical investigations using the proposed scheme. Unless otherwise specified, we use $\mathsf p+\mathsf a+1$ Gauss quadrature points in each direction; the pressure function space is generated by the $k$-refinement to achieve the highest possible continuity.

\begin{figure}
\begin{center}
\begin{tabular}{cc}
\includegraphics[angle=0, trim=200 80 200 80, clip=true, scale = 0.15]{./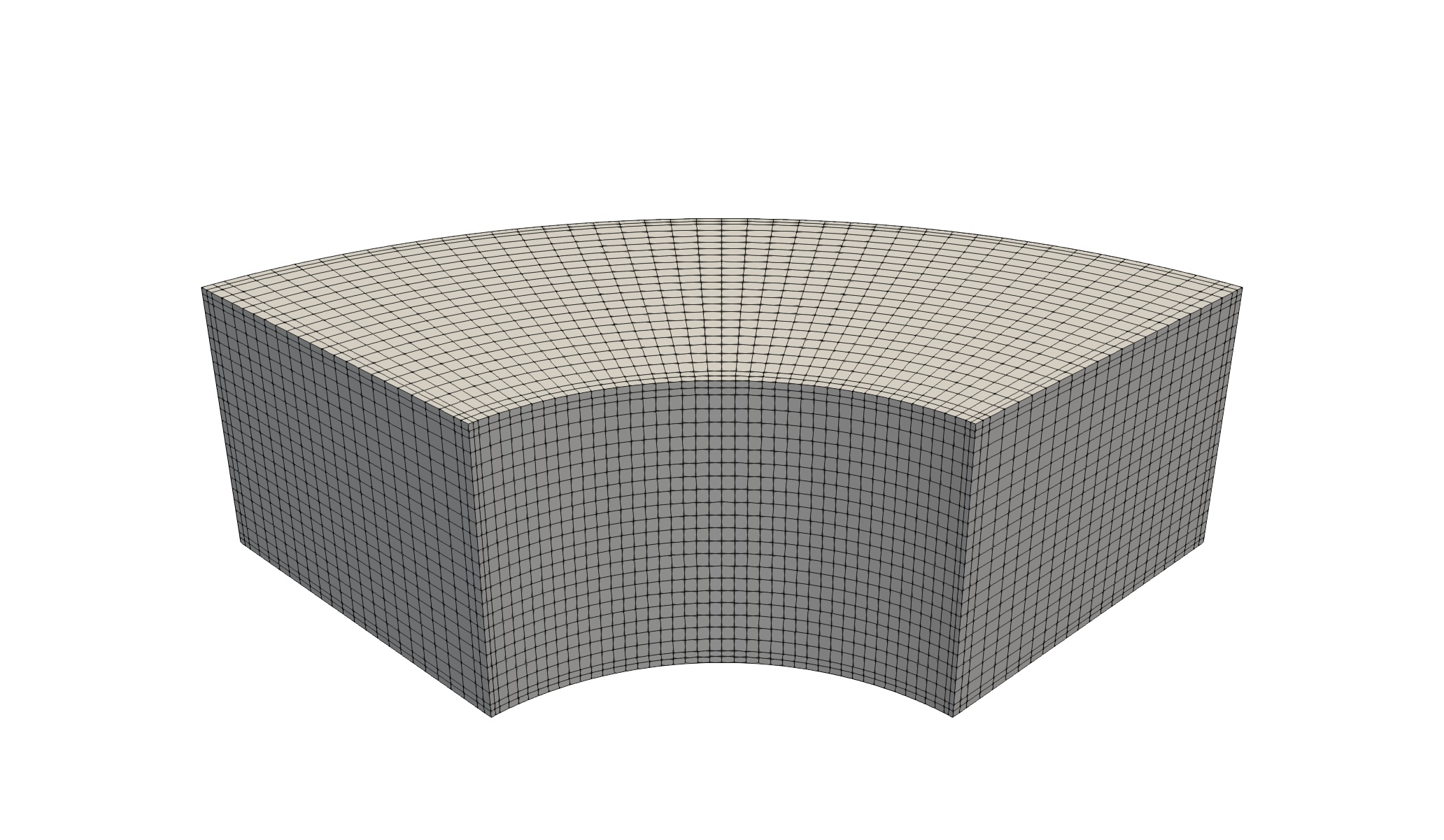} &
\includegraphics[angle=0, trim=360 680 360 380, clip=true, scale = 0.03]{./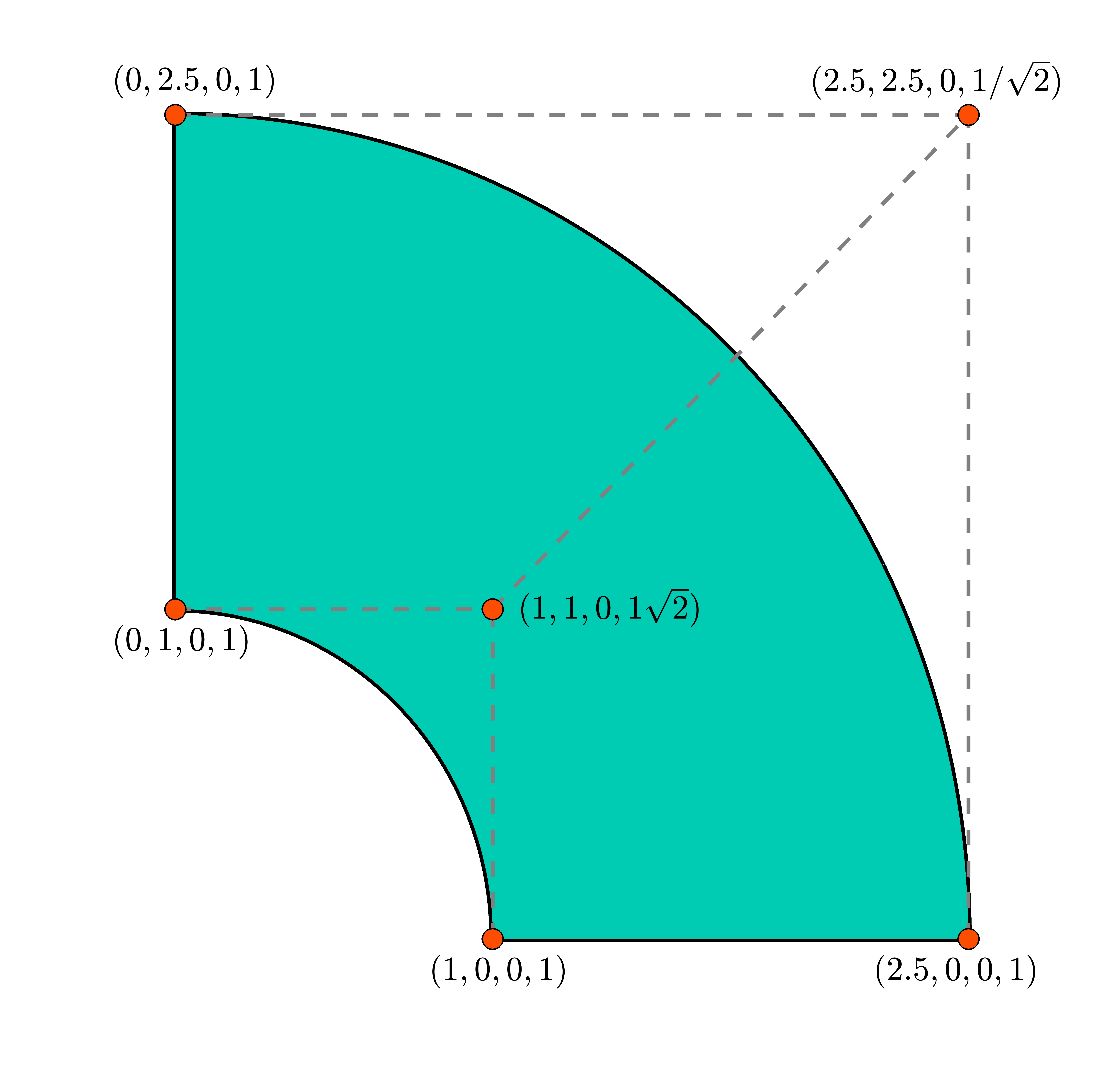}
\end{tabular}
\caption{The geometry of the thick-walled cylinder (left) and the control net with the control points' coordinates as well as weights on the bottom plane surface (right). The NURBS basis functions in the circumferential direction are built from the knot vector $\left \{ 0,0,0,1,1,1\right \}$. The NURBS basis functions in the radial and axis direction are built from the knot vector $\left \{ 0,0,1,1\right \}$.} 
\label{fig:inf_sup_geometry_setting}
\end{center}
\end{figure}

\begin{figure}
\begin{center}
\begin{tabular}{ccc}
\includegraphics[angle=0, trim=85 100 110 100, clip=true, scale = 0.35]{./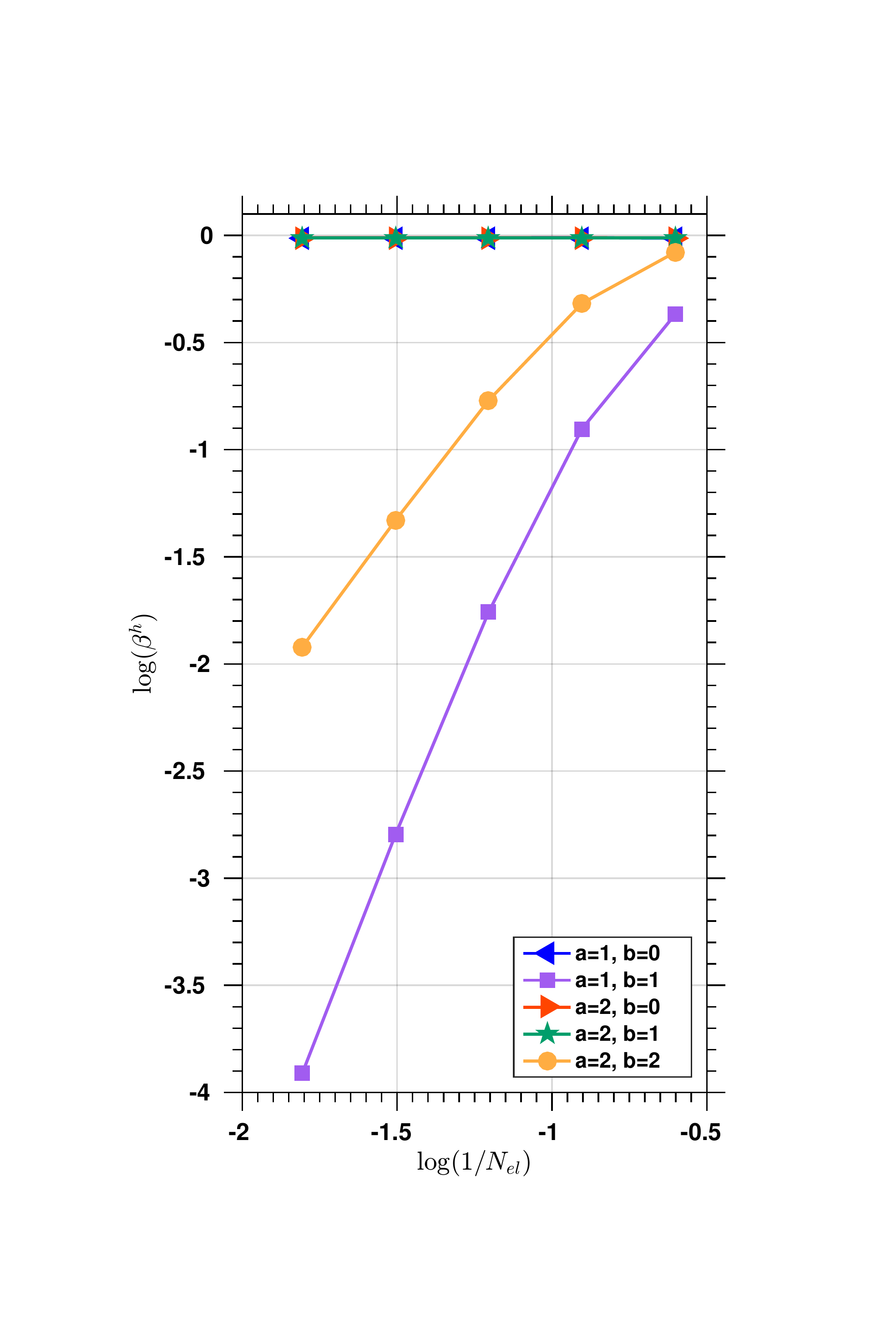} &
\includegraphics[angle=0, trim=85 100 110 100, clip=true, scale = 0.35]{./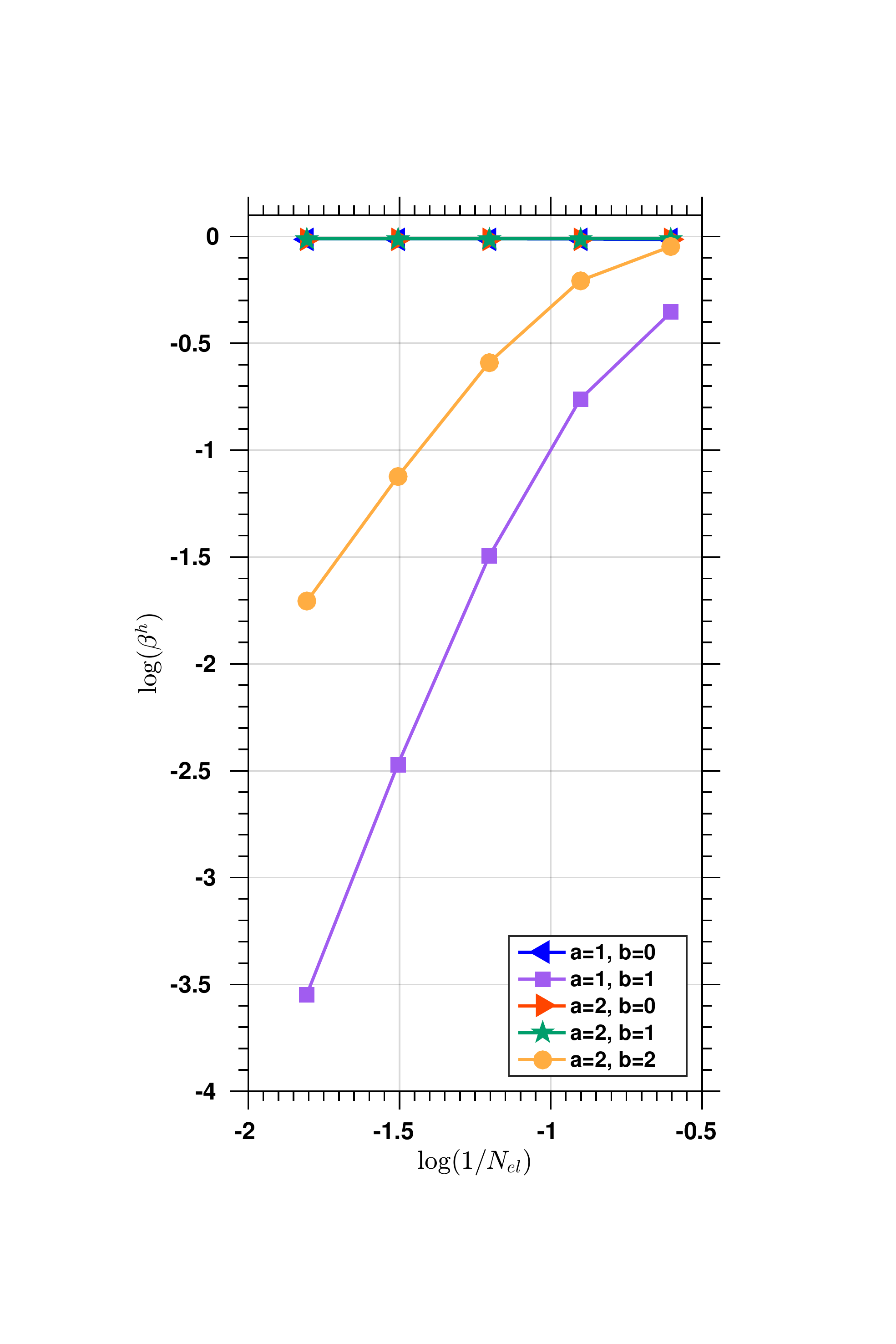} &
\includegraphics[angle=0, trim=85 100 110 100, clip=true, scale = 0.35]{./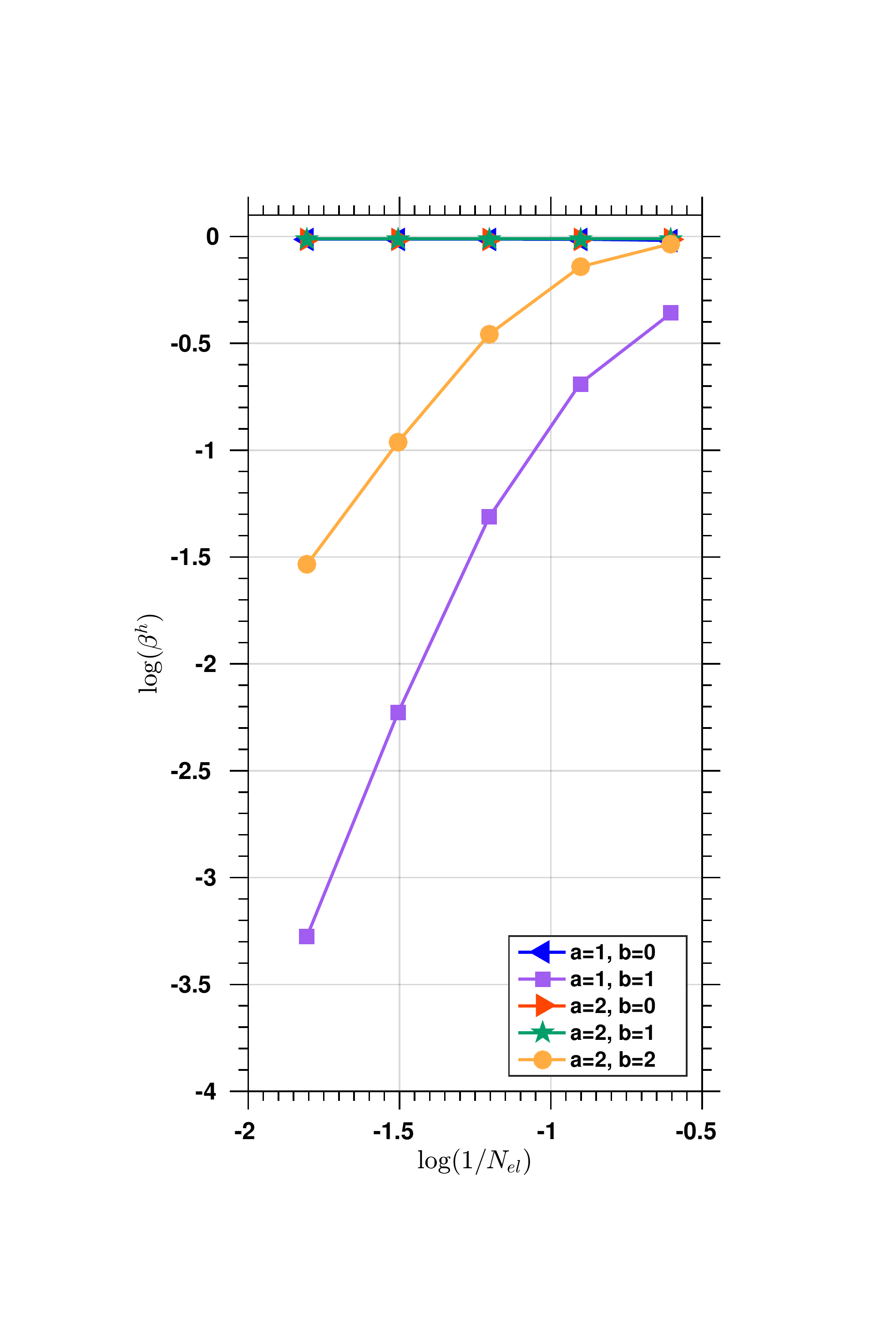} \\
(a) & (b) & (c)
\end{tabular}
\caption{The numerical inf-sup test for the thick-walled cylinder using (a) $\mathsf p=2$, (b) $\mathsf p=3$, and (c) $\mathsf p=4$ with $0\leq \mathsf b \leq \mathsf a=2$ and $N_{el}$ elements in each direction.} 
\label{fig:inf_sup_p2p3p4}
\end{center}
\end{figure}

\begin{figure}
\begin{center}
\begin{tabular}{cc}
\includegraphics[angle=0, trim=80 80 100 100, clip=true, scale = 0.5]{./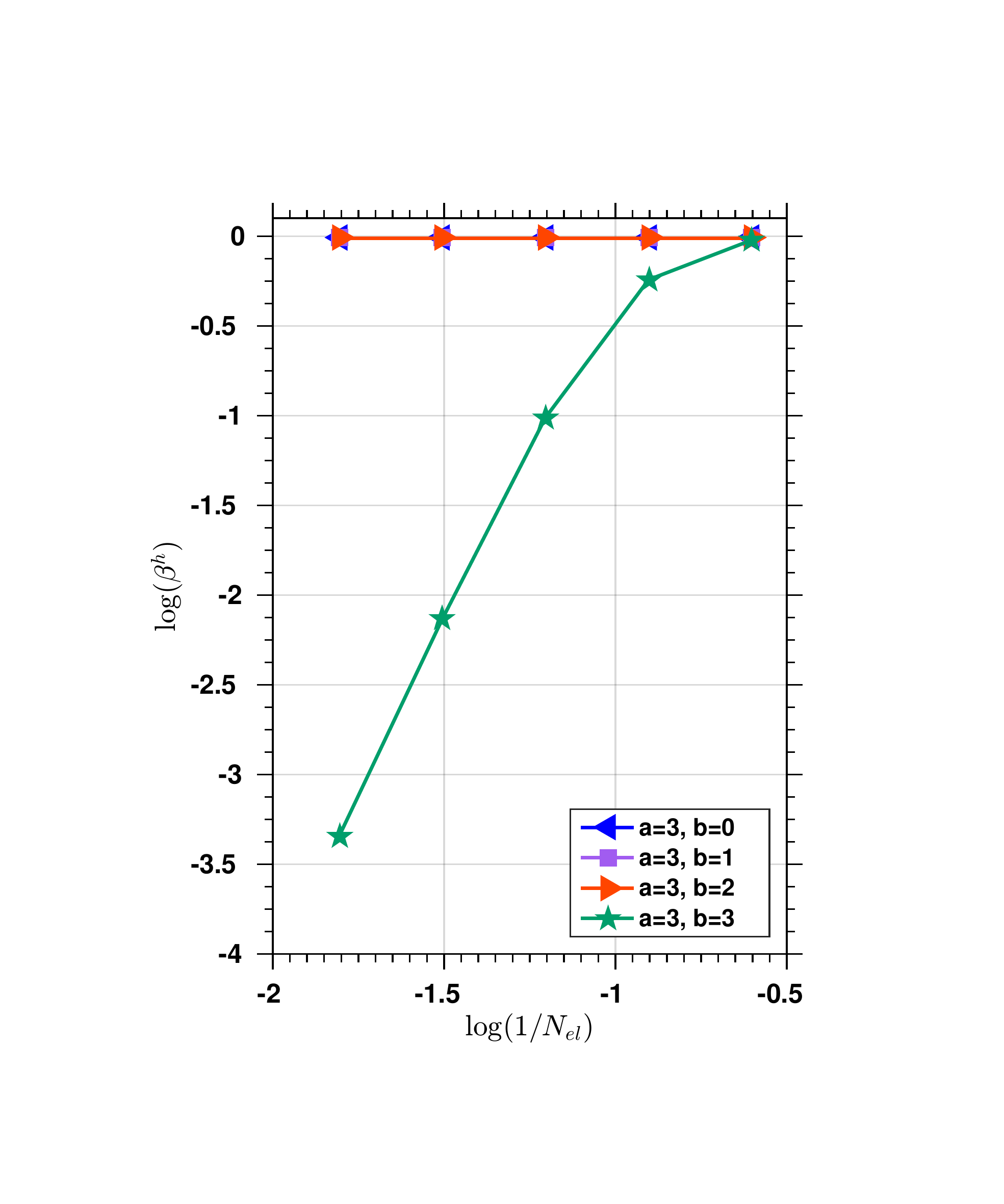} &
\includegraphics[angle=0, trim=80 80 100 100, clip=true, scale = 0.5]{./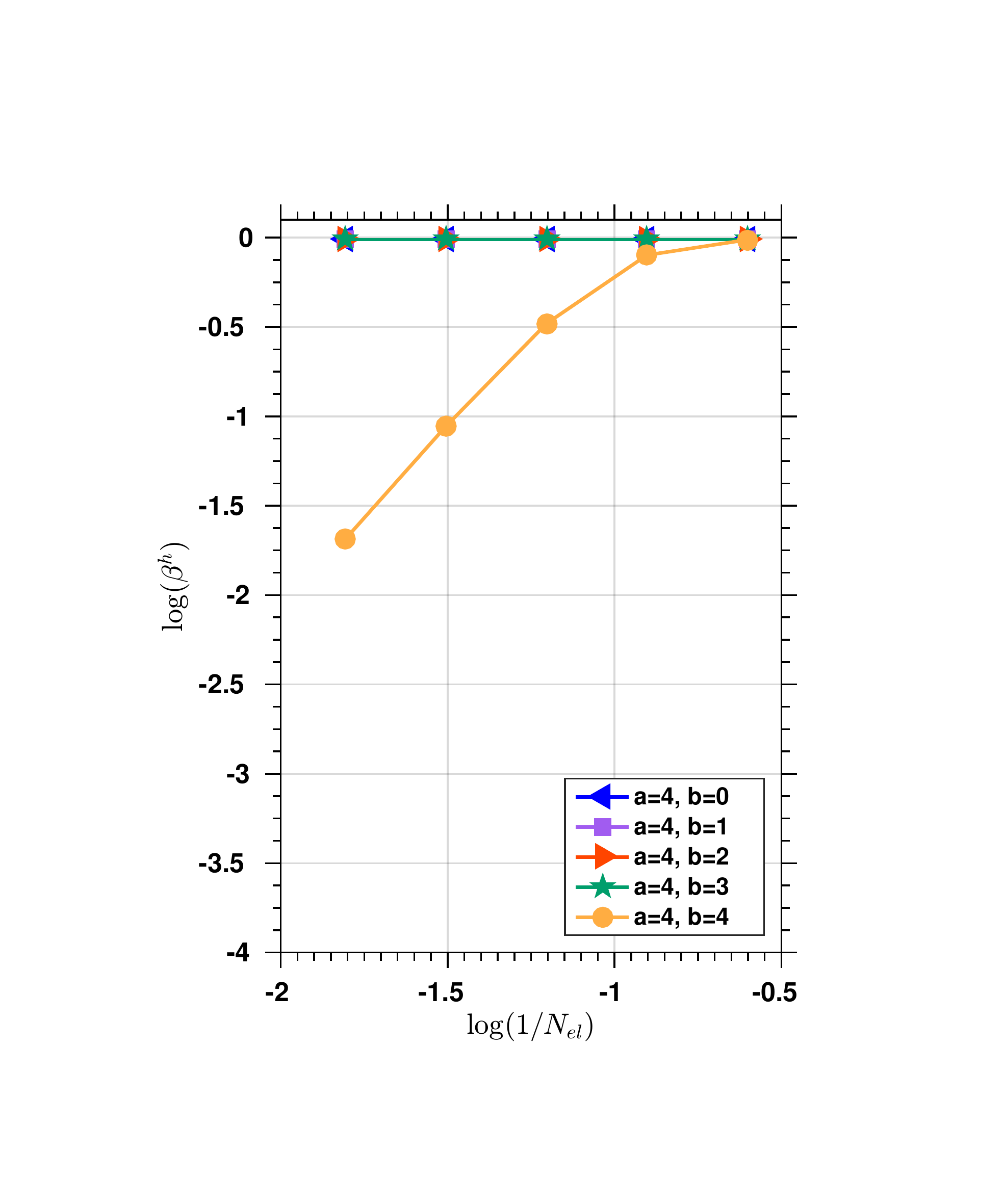} \\
(a) & (b)
\end{tabular}
\caption{The numerical inf-sup test for the thick-walled cylinder domain using $\mathsf p=2$ with (a) $0 \leq \mathsf b \leq \mathsf a=3$ and (b) $0 \leq \mathsf b \leq \mathsf a=4$ and $N_{el}$ elements in each direction.} 
\label{fig:inf_sup_p2a3a4}
\end{center}
\end{figure}

\subsection{Numerical Inf-Sup test}
The inf-sup condition for the discrete problem states that there exists a constant $\beta$ independent of the mesh size such that
\begin{align*}
\underset{q_h \in \mathcal S_{p_h}}{\textup{inf}} \underset{\bm v_h \in \mathcal S_{\bm v_h}}{ \textup{sup}}  \frac{ \int_{\Omega_{\bm x}} \nabla_{\bm x} \cdot \bm v_{h} q_h d\Omega_{\bm x} }{\|\bm v_h \|_1 \|q_h\|_0} = \beta^h \geq \beta > 0.
\end{align*}
We examine the inf-sup condition for the proposed discrete spaces $\mathcal S_{\bm v_h}$ and $\mathcal S_{p_h}$ using the numerical inf-sup test \cite{Boffi2013}. Let $N_A$ and $M_{\tilde{A}}$ denote the velocity and pressure basis functions on the current configuration where $A$ and $\tilde{A}$ are the node number. The following matrices are defined.
\begin{align*}
& \boldsymbol{\mathrm D} := \left[ \mathrm D^i_{A\tilde{B}} \right], && \mathrm D^i_{A\tilde{B}} := \int_{\Omega_{\bm x}} \nabla_{\bm x} N_A \cdot \bm e_i M_{\tilde{B}} d\Omega_{\bm x}, \\
& \boldsymbol{\mathrm W} := \left[ Q_{\tilde{A}\tilde{B}}\right], && \mathrm W_{\tilde{A}\tilde{B}} := \int_{\Omega_{\bm x}} M_{\tilde{A}} M_{\tilde{B}} d\Omega_{\bm x}, \\
& \boldsymbol{\mathrm V} := \left[ V^{ij}_{AB} \right], && \mathrm V^{ij}_{AB} := \int_{\Omega_{\bm x}} N_A N_B  + \nabla_{\bm x} N_A \cdot \nabla_{\bm x} N_B d\Omega_{\bm x} \delta_{ij}. 
\end{align*}
We consider the following eigenvalue problem: Find $\gamma^h_i$ and $\bm \psi_i$ such that
\begin{align*}
\boldsymbol{\mathrm D} \boldsymbol{\mathrm V}^{-1} \boldsymbol{\mathrm D}^T \bm \psi_i = \gamma_i^h \boldsymbol{\mathrm W} \bm \psi_i.
\end{align*}
The value of $\beta^h$ is determined as the square root of the smallest non-zero eigenvalue. The regularity vector $\bm \alpha = \{-1, \alpha, \cdots, \alpha, -1 \}$ is the same in all three directions. The numerical integration is performed by the Gauss quadrature rule with $\mathsf p+\mathsf a+2$ quadrature points in each direction to ensure accuracy. The eigenvalues are calculated by the SLEPc package \cite{Hernandez2005}. The trend of $\beta^h$ is examined as we progressively refine the mesh for $0 \leq \mathsf b \leq \mathsf a$. We consider a curved geometry for the domain, which is exactly represented by NURBS and illustrated in Figure \ref{fig:inf_sup_geometry_setting}. The computed values of $\beta^h$ for $\mathsf p=2$, $3$, and $4$ with $0 \leq \mathsf b \leq \mathsf a \leq 2$ are presented in Figure \ref{fig:inf_sup_p2p3p4}. It can be observed that $\beta^h$ approaches zero with mesh refinement when $\mathsf a=\mathsf b$. To confirm this observation, we investigate the cases of $\mathsf a=3$ and $\mathsf a=4$ with $\mathsf p$ fixed to be $2$, with results reported in Figure \ref{fig:inf_sup_p2a3a4}. Again, we observe that $\beta^h$ shows a clear trend of approaching zero with mesh refinement only when $\mathsf a=\mathsf b$. To further validate this finding, we also study a unit cube for the domain, which allows us to start the test with $\mathsf p=1$. Again, the same trend of $\beta^h$ is observed. Based on the collected results, we make the following salient observations. For the smooth generalizations of the Taylor-Hood element, if the velocity space is generated by pure $k$-refinement (i.e., $\mathsf a=\mathsf b$), the resulting element pair is \textit{not} inf-sup stable. If the velocity space is generated by pure $p$-refinement from the pressure space (i.e., $\mathsf b=0$), the smallest eigenvalues are bounded below from zero. Also, if $\mathsf a \geq 2$, the velocity spaces generated with $1 \leq \mathsf a - \mathsf b$ also pass the numerical inf-sup test. This suggests that one may still perform $k$-refinement to increase the regularity of the velocity space if it is followed by a $p$-refinement of order at least one.

\begin{figure}
\begin{center}
\begin{tabular}{cc}
\includegraphics[angle=0, trim=85 85 120 100, clip=true, scale = 0.46]{./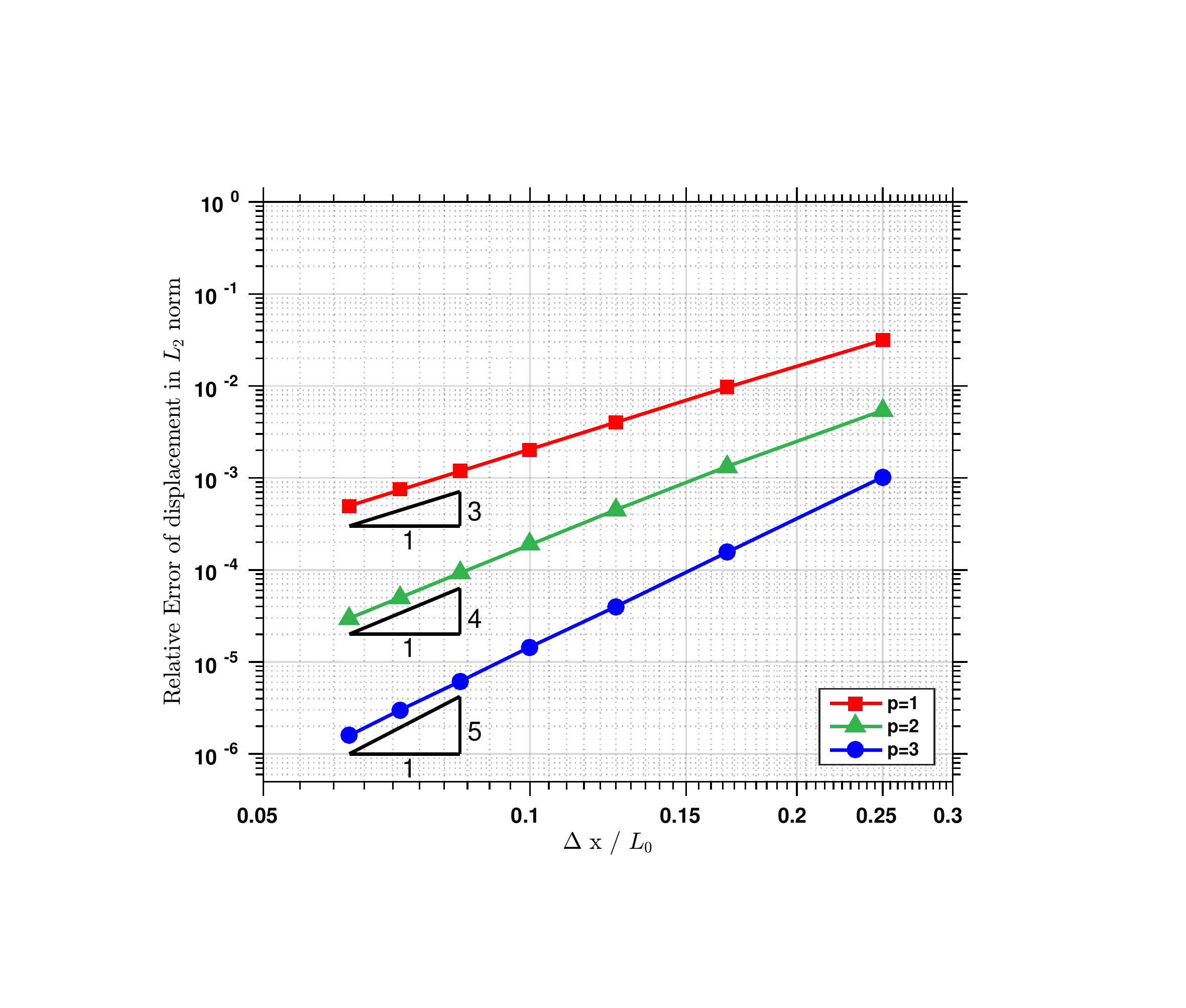} & 
\includegraphics[angle=0, trim=85 85 120 100, clip=true, scale = 0.46]{./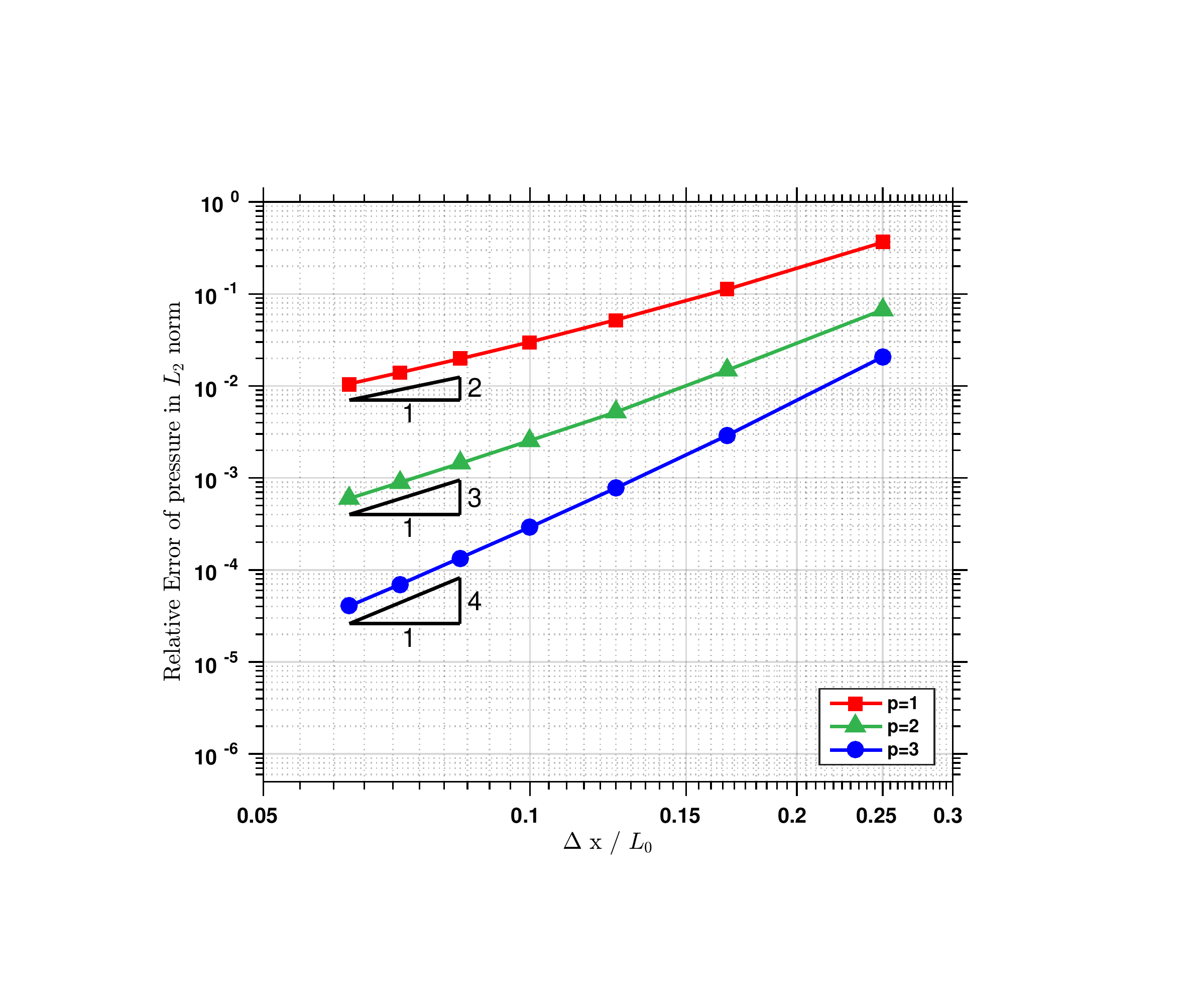} \\
(a) & (b) \\
\includegraphics[angle=0, trim=85 85 120 100, clip=true, scale = 0.46]{./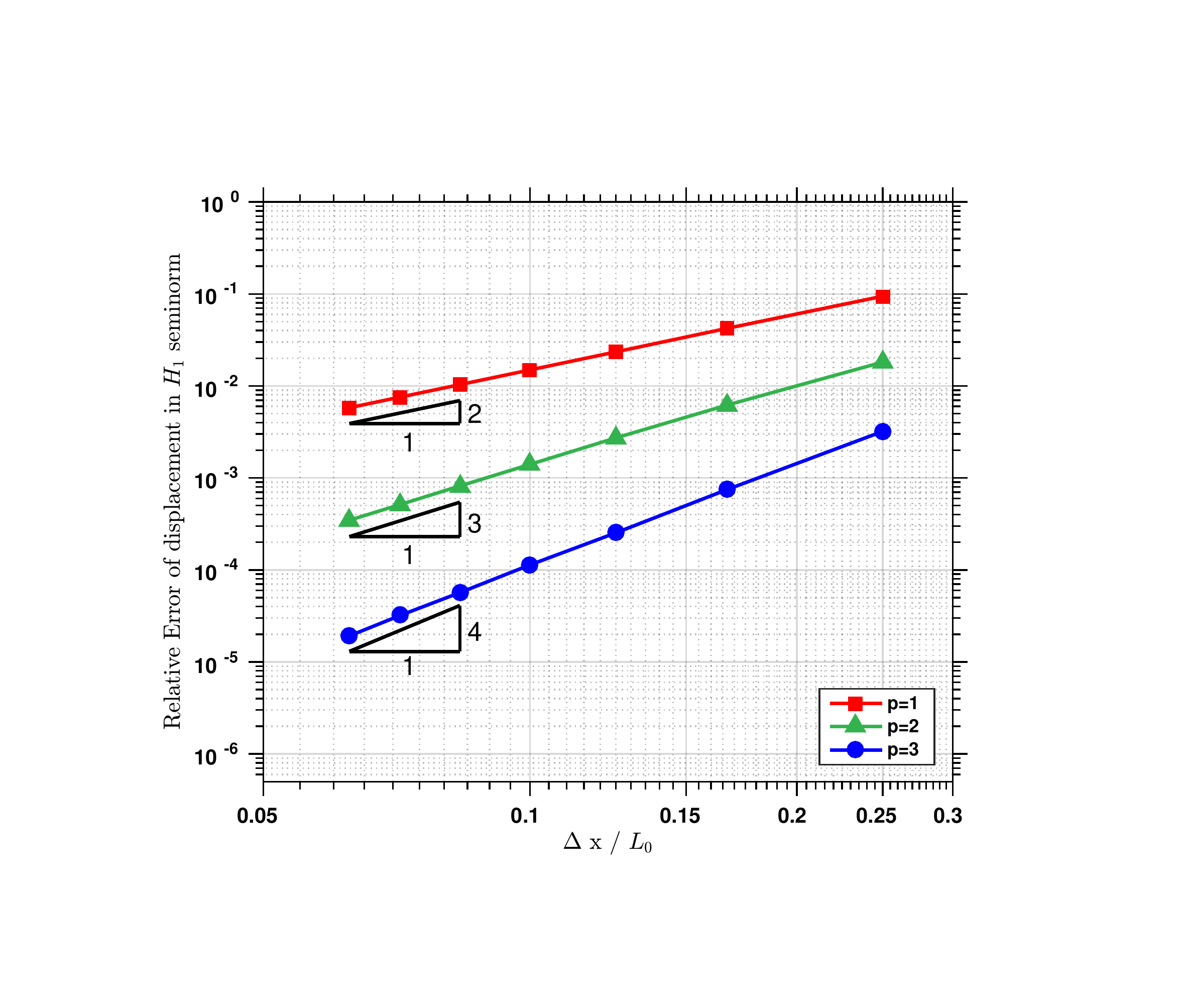} & 
\includegraphics[angle=0, trim=85 85 120 100, clip=true, scale = 0.46]{./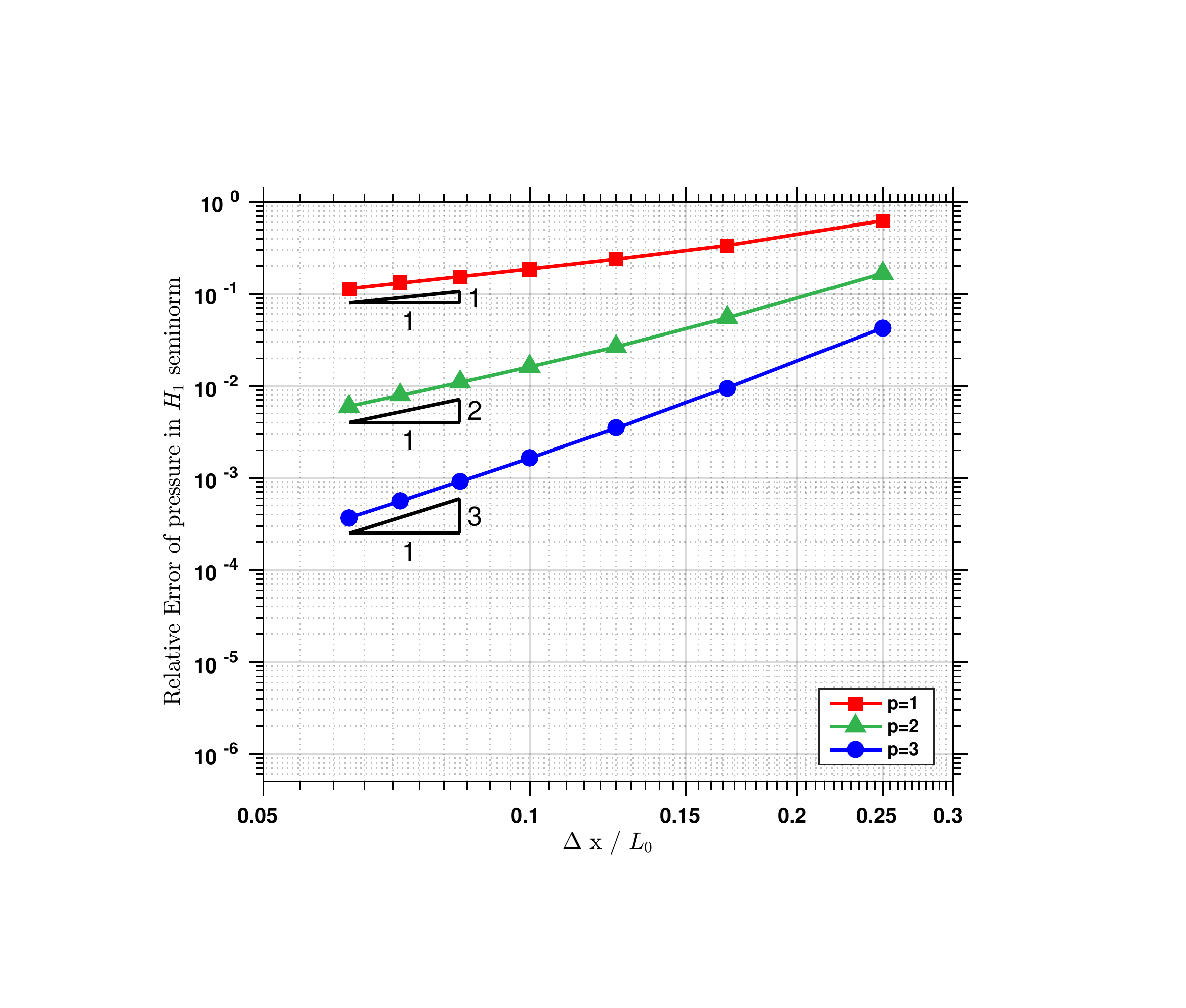} \\
(c) & (d) 
\end{tabular}
\caption{The relative errors of (a) the displacement in $L_2$ norm, (b) the pressure in $L_2$ norm, (c) the displacement in $H_1$ seminorm, and (d) the pressure in $H_1$ seminorm, under h-refinement with $\mathsf a=1$ and $\mathsf b=0$.} 
\label{fig:conv_rate_h_refine}
\end{center}
\end{figure}

\subsection{Convergence studies}
In this example, we investigate the convergence behavior of the proposed numerical scheme. We consider an incompressible Neo-Hookean material model
\begin{align*}
G(\tilde{\bm C}, p) = \frac{c_1}{2\rho_0} \left( \tilde{I}_1 - 3 \right) + \frac{p}{\rho_0}.
\end{align*}
The geometrical domain is a unit cube with dimension 1m $\times$ 1m $\times$ 1m. The modulus $c_1$ is chosen as $1$ Pa, and the density $\rho_0$ is 1 kg/m$^3$. The analytic forms of the displacement and pressure fields on the referential configuration adopt the following forms,
\begin{align*}
\bm U(\bm X,t) = c \frac{L_0}{T_0^2} t^2 \begin{bmatrix}
\sin(\gamma \frac{Y}{L_0}) \sin(\gamma \frac{Z}{L_0}) \\
0 \\
0
\end{bmatrix}, \quad P(\bm X,t) &= d\frac{M_0}{L_0T_0^4}t^2 \sin(\beta \frac{X}{L_0}) \sin(\beta \frac{Y}{L_0}) \sin(\beta \frac{Z}{L_0}).
\end{align*}
In this example, the reference values are chosen as $L_0 = 1$ m, $M_0 = 1$ kg, $T_0 =1$ s; both $\beta$ and $\gamma$ are chosen to be $2\pi$ rad; $c$ and $d$ are non-dimensional parameters that take the value 0.2. On the faces $Y=Z=0$ m and $Y=Z=1$ m, the body is fully clamped, and traction boundary conditions are applied on the rest faces. For the simulations, we use $\textup{tol}_{\textup{R}} = 10^{-10}$ and $\textup{tol}_{\textup{A}} = 10^{-12}$ as the stopping criteria in the predictor multi-corrector algorithm. Two different time step sizes are used to ensure that the temporal error does not pollute the spatial convergence rate. The relative errors of the displacement and pressure fields are reported in Figure \ref{fig:conv_rate_h_refine} for varying values of $\mathsf p$ with $\mathsf a=1$, $\mathsf b=0$. We notice immediately that all the errors decrease with the optimal rates. In Figure \ref{fig:conv_rate_h_refine_a2}, we report the convergence rates for $\mathsf a = 2$, which resembles a smooth generalization of the spectral element \cite{Yu2012}. In Figure \ref{fig:conv_rate_h_refine_a2} (a), we note that the increase of the value of $\mathsf a$ does not improve the convergence rate, regardless of the value of $\mathsf b$. Yet, the velocity error is smaller than that of the $\mathsf a = 1$ case. From Figure \ref{fig:conv_rate_h_refine_a2} (b), we can see that the pressure errors are almost indistinguishable for $\mathsf a = 1$ and $\mathsf a = 2$.

\begin{figure}
\begin{center}
\begin{tabular}{cc}
\includegraphics[angle=0, trim=85 85 120 100, clip=true, scale = 0.46]{./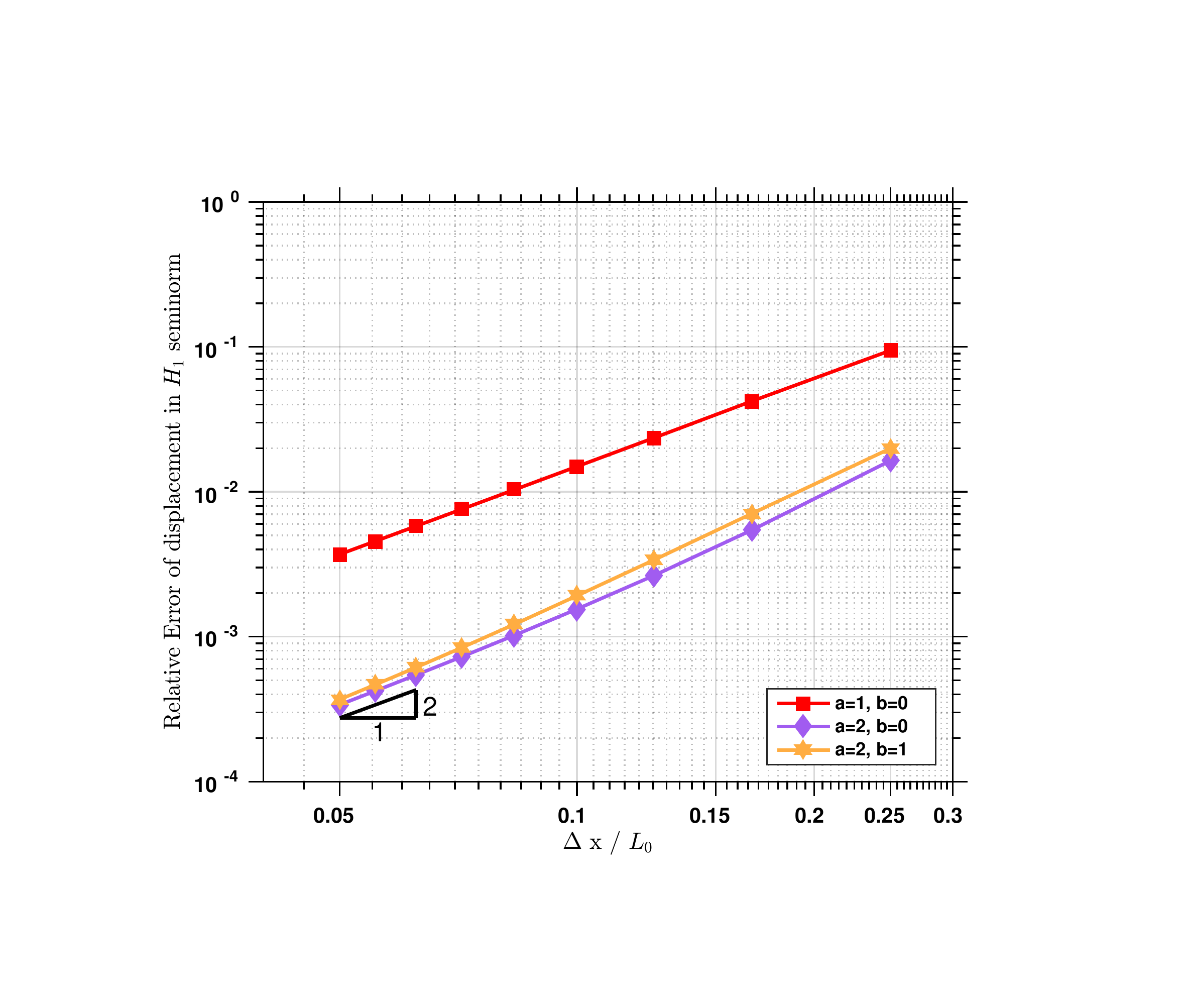} & 
\includegraphics[angle=0, trim=85 85 120 100, clip=true, scale = 0.46]{./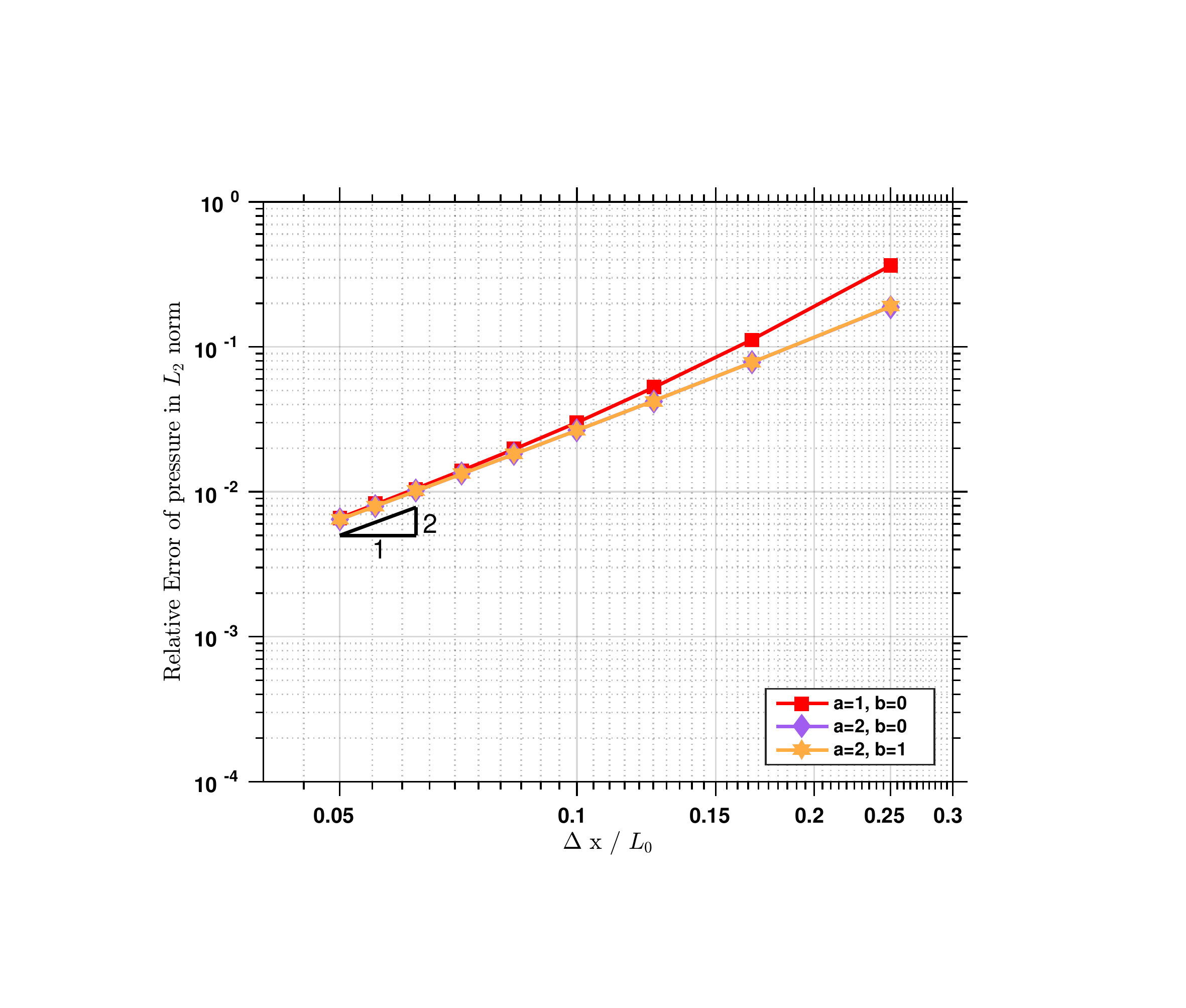} \\
(a) & (b) 
\end{tabular}
\caption{The relative errors of (a) the displacement in $H_1$ seminorm and (b) the pressure in $L_2$ norm, under h-refinement with $p=1$ and varying values of $\mathsf a$ and $\mathsf b$.} 
\label{fig:conv_rate_h_refine_a2}
\end{center}
\end{figure}

\begin{figure}
\begin{center}
\begin{tabular}{cc} 
\includegraphics[angle=0, trim=90 200 180 280, clip=true, scale = 0.6]{./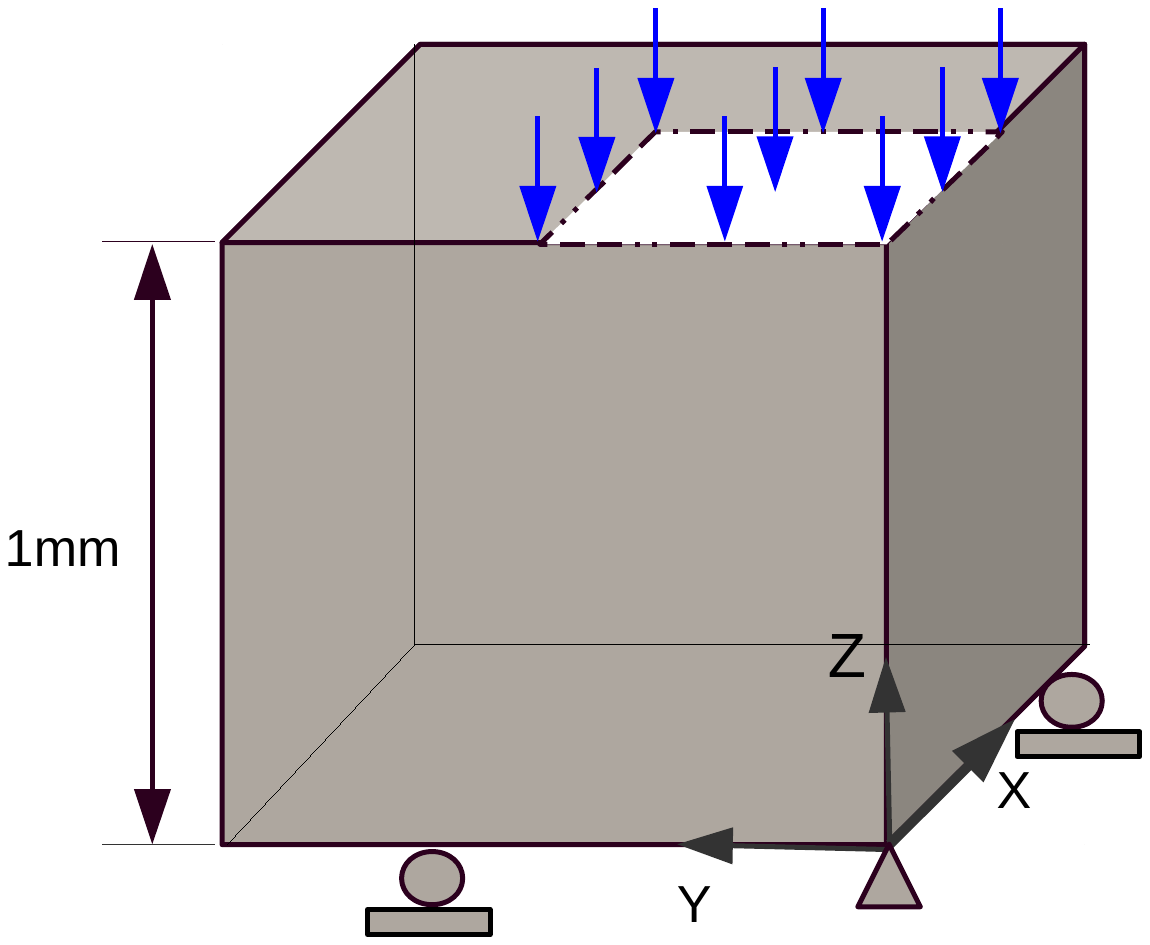} &
\includegraphics[angle=0, trim=100 80 100 80, clip=true, scale = 0.4]{./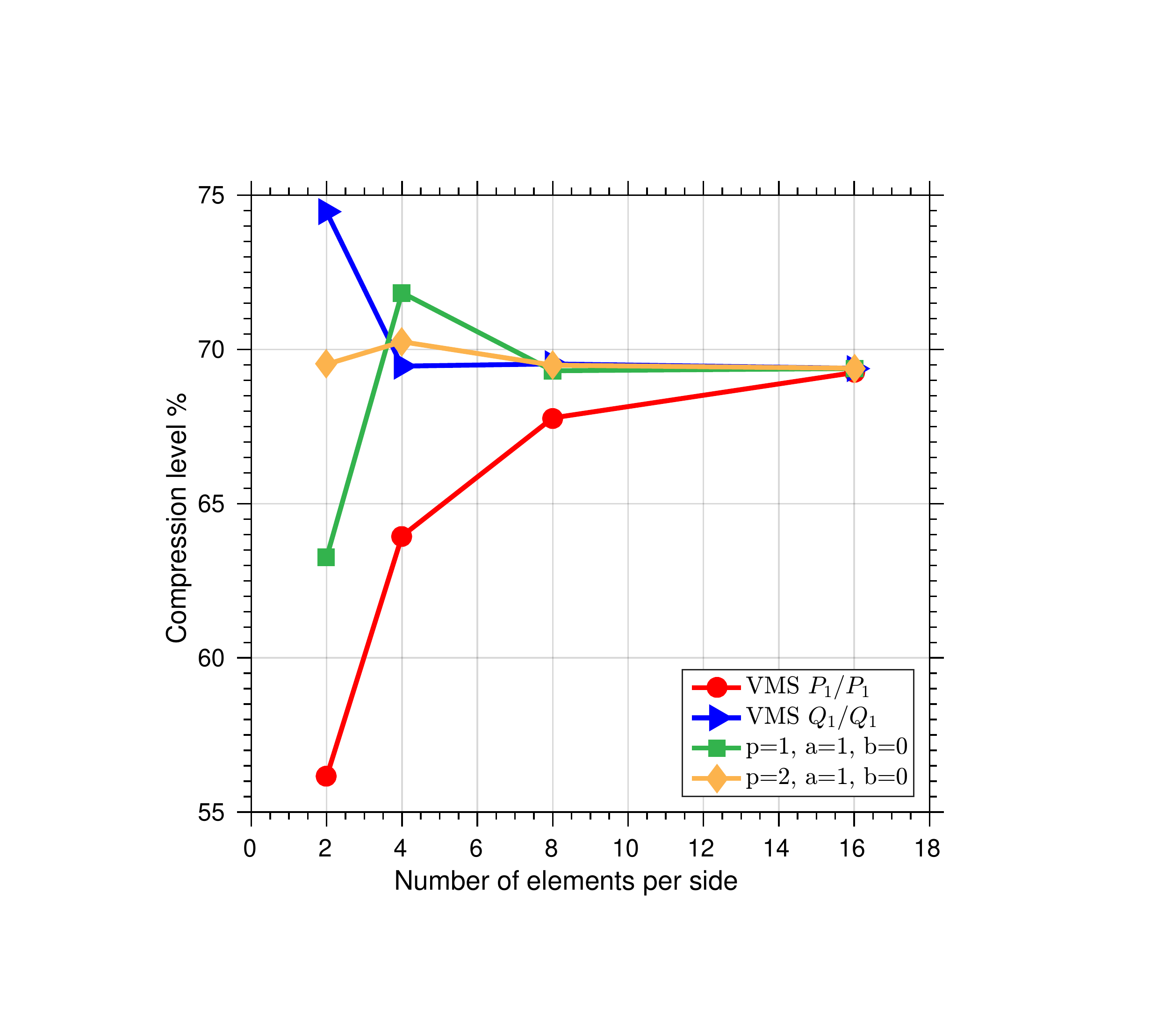} \\
(a) & (b)
\end{tabular}
\caption{Three-dimensional block compression: (a) geometry setting; (b) compression level in \% versus the number of elements per side.} 
\label{fig:block_compression_tip_disp}
\end{center}
\end{figure}

\subsection{Three-dimensional compression of a block}
In this example, we examine the performance of the new formulation using the benchmark problem initially designed in \cite{Reese2000}. On the boundary faces $X=Y=Z=0$, we apply symmetry boundary conditions, and we disallow horizontal displacement on the top surface. A `dead' load with magnitude $3.2 \times 10^8$ Pa is applied on a quarter portion of the top surface, which assumes the negative Z-direction in the referential configuration. The block is initially stress free with zero displacement. The surface traction load is applied as a linear function of time and reaches the prescribed magnitude at time $T=1$ s. We adopt an incompressible Neo-Hookean model given by the following energy function,
\begin{align*}
G(\tilde{\bm C}, p) = \frac{c_1}{2\rho_0} \left( \tilde{I}_1 - 3 \right) + \frac{p}{\rho_0}.
\end{align*}
The material properties are chosen as $\rho_0 = 1.0 \times 10^3$ kg/m$^3$ and $c_1 = 8.0194 \times 10^7$ Pa. We simulate the problem with a fixed time step size $\Delta t = 5.0\times 10^{-3}$ s. We fix the values of $\mathsf a$ and $\mathsf b$ to $1$ and $0$ in this example and choose $\textup{tol}_{\textup{R}} = 10^{-3}$ and $\textup{tol}_{\textup{A}} = 10^{-6}$ as the stopping criteria. For comparison purposes, we also simulate the problem with the variational multiscale (VMS) formulation \cite{Liu2018} using equal-order interpolations. As a classical benchmark problem, the primary quantity of interest is the displacement at the upper center point (i.e. the point at $X=Y=0$, $Z=1$ in the reference configuration). In Figure \ref{fig:block_compression_tip_disp} (b), the compression levels at this point calculated by different methods are illustrated. For the coarsest mesh (two elements per side), the stable element with $\mathsf p = 2$ gives a very good prediction of the compression level. It is interesting to note that the equal-order interpolation using the $Q_1/Q_1$ element with the VMS formulation gives a fairly good result for a finer mesh with four elements per side. Using the same mesh, the stable elements with $\mathsf p=1$ and $\mathsf p=2$ gives slightly softer predictions, which is due to oscillations of the higher-order methods at the tip. Using a mesh with eight elements per side, both stable elements give indistinguishable results in comparison with the reference value. In Figure \ref{fig:block_compression_pressure}, we further compare the pressure profiles at the current configuration calculated by a coarse mesh (two elements per side) with the value of $\mathsf p$ varying from $2$ to $6$. The pressure profile calculated by a fine mesh (16 elements per side and $\mathsf p = 2$) is depicted to serve as a reference solution profile. It can be observed that the increase of the polynomial degree $\mathsf p$ improves of the solution quality. For the case of $\mathsf p = 6$, the calculated result essentially captures the major feature of the pressure field.

\begin{figure}
\begin{center}
\begin{tabular}{cc} 
\includegraphics[angle=0, trim=250 0 300 200, clip=true, scale = 0.16]{./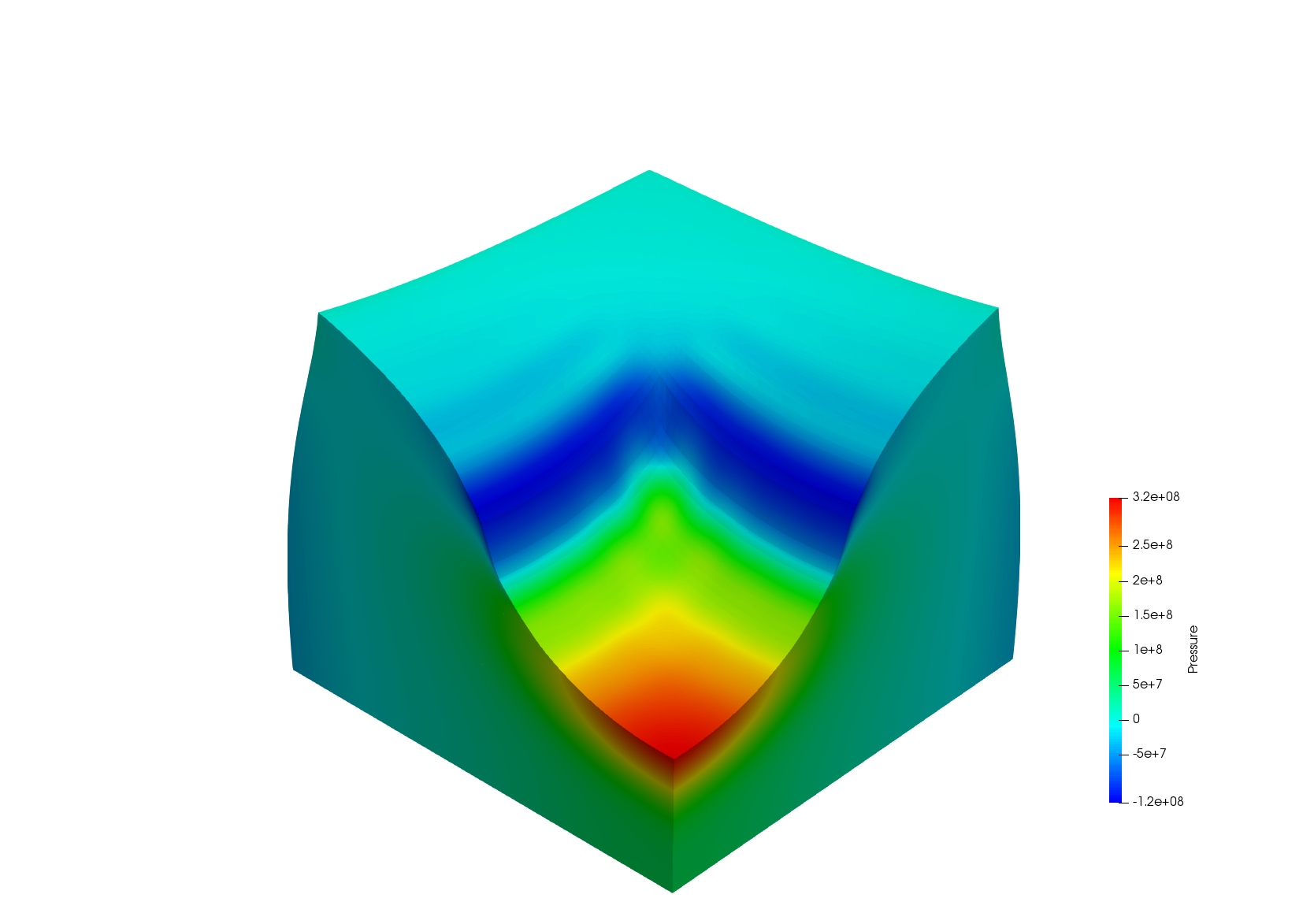} &
\includegraphics[angle=0, trim=250 0 300 200, clip=true, scale = 0.16]{./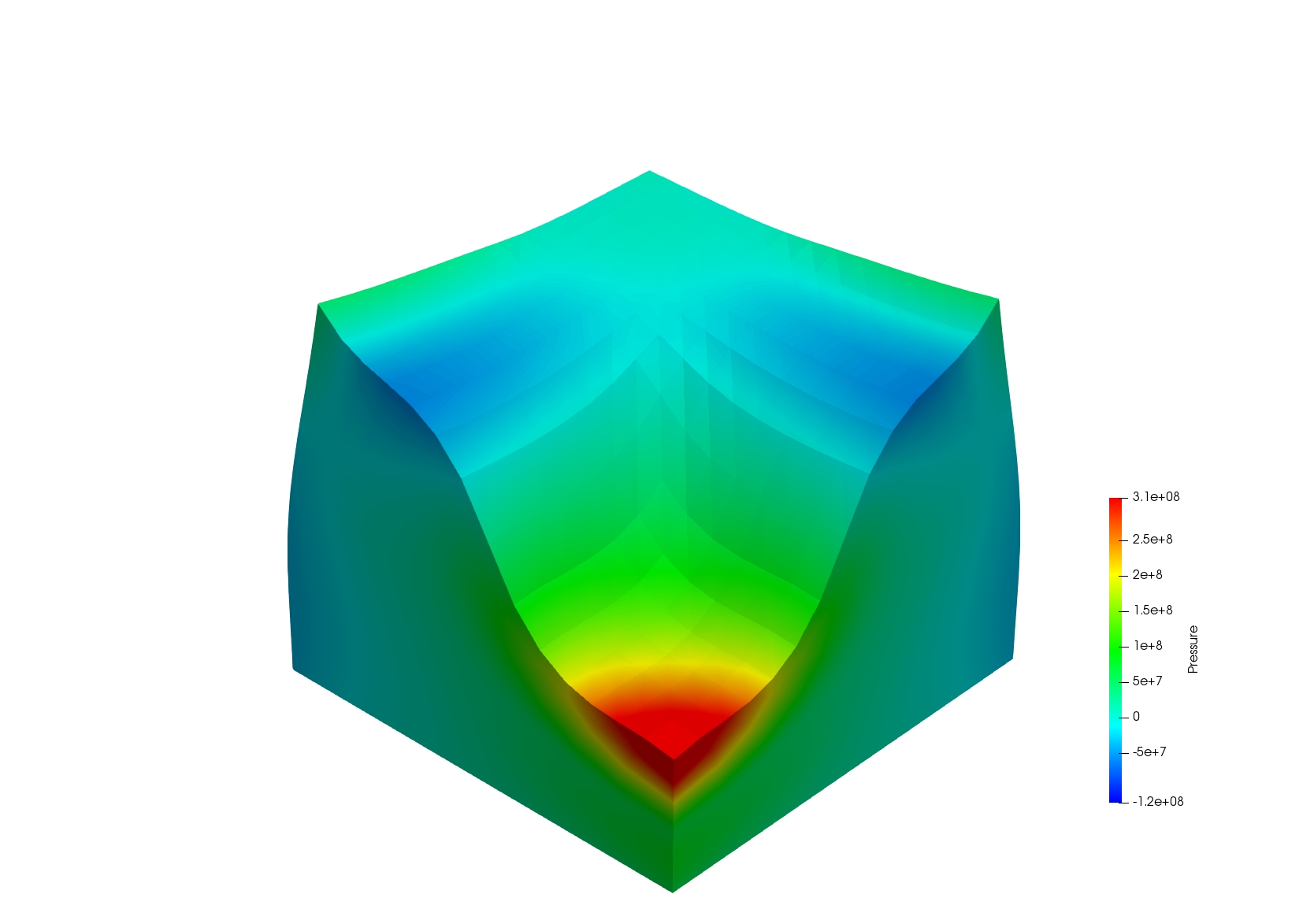} \\
(a) & (b) \\
\includegraphics[angle=0, trim=250 0 300 150, clip=true, scale = 0.16]{./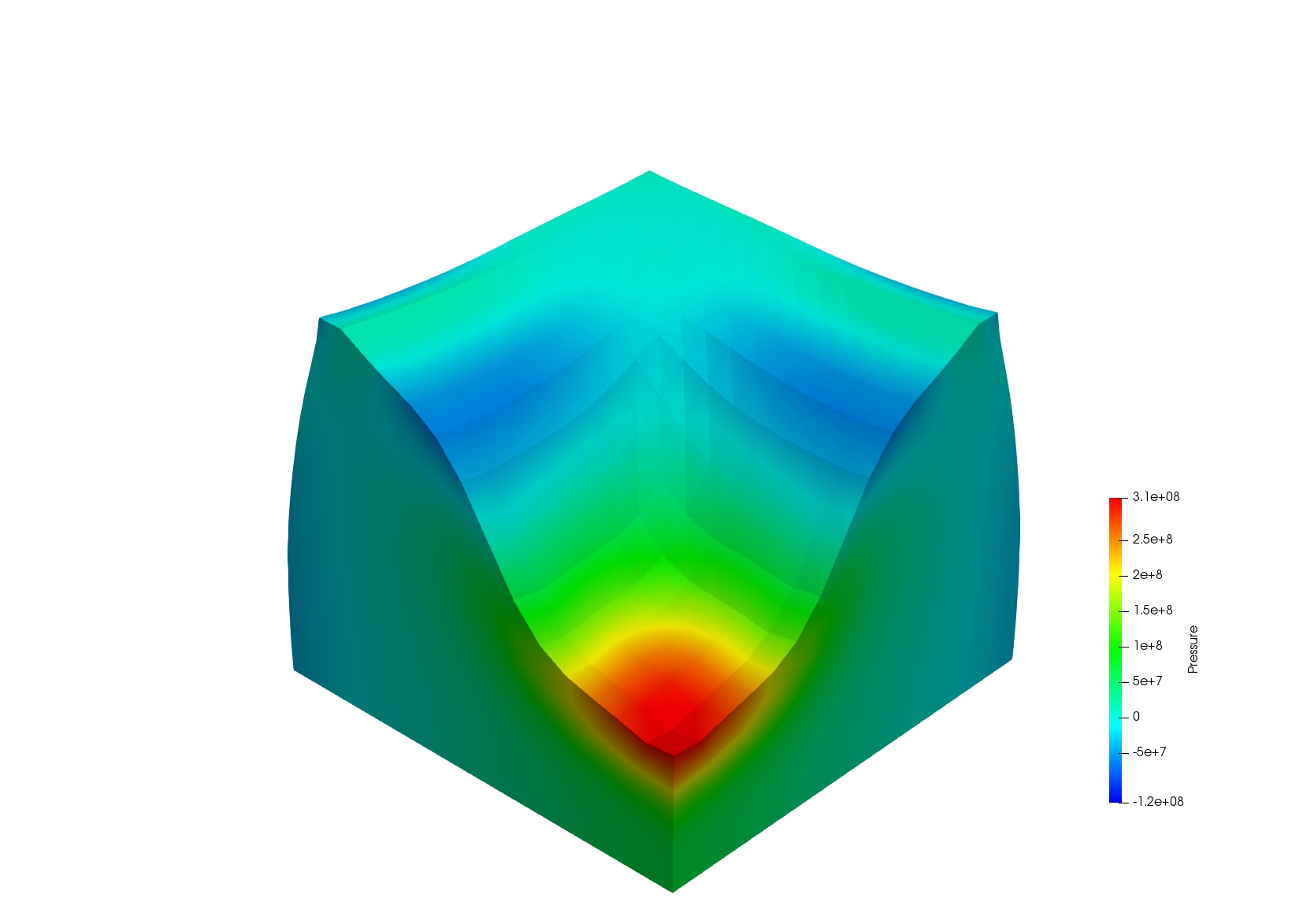} & 
\includegraphics[angle=0, trim=250 0 300 150, clip=true, scale = 0.16]{./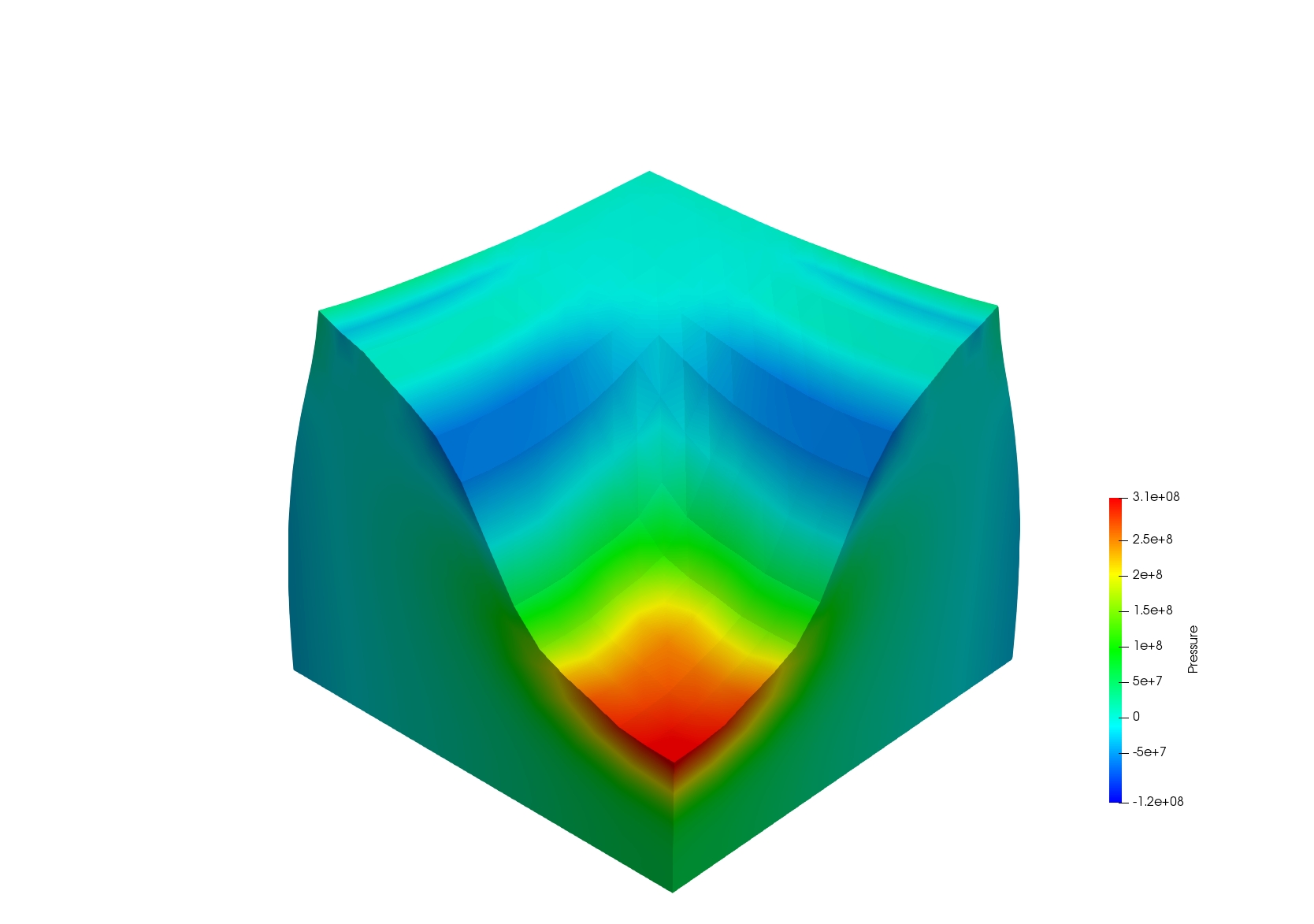} \\
(c) & (d) \\
\multicolumn{2}{c}{
\includegraphics[angle=0, trim=0 1050 700 10, clip=true, scale = 0.3]{./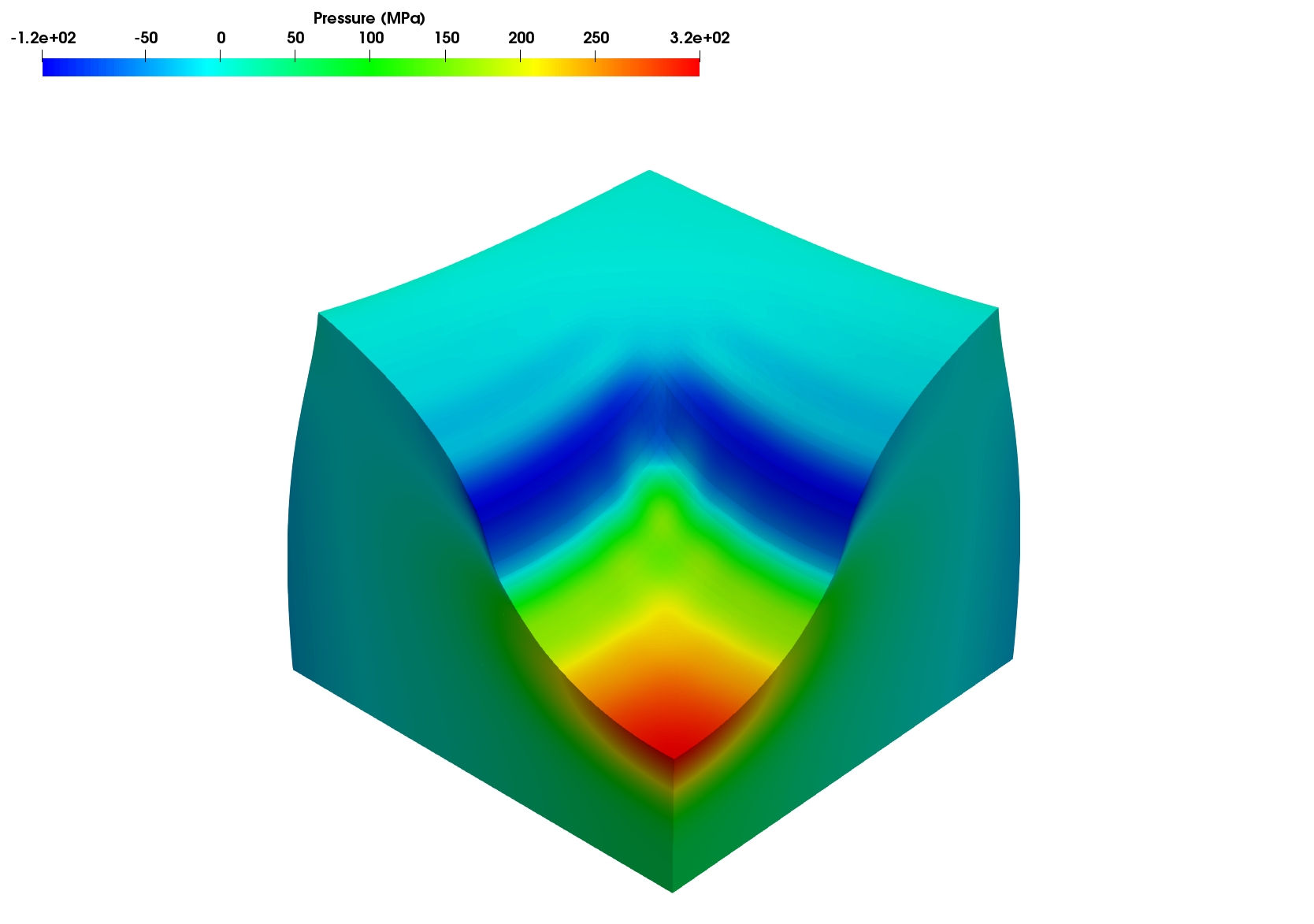}
}
\end{tabular}
\caption{Three-dimensional block compression: pressure profile in the current configuration with (a) $p=2$, $\Delta x = 1/16$, (b) $p=2$, $\Delta x = 1/2$, (c) $p=4$, $\Delta x = 1/2$, and (d) $p=6$, $\Delta x = 1/2$.} 
\label{fig:block_compression_pressure}
\end{center}
\end{figure}

\subsection{Tensile test of an anisotropic fiber-reinforced hyperelastic soft tissue specimen}
\label{subsec:tensile_test}
In this example, we examine the performance of the proposed formulation for an anisotropic hyperelastic material, which has been designed to describe arterial tissue layers with distributed collagen fibers \cite{Gasser2006}. The geometry set-up and the material model are summarized in Table \ref{table:3d_tensile_benchmark_geometry}. The groundmatrix is modeled as an isotropic Neo-Hookean material, with $c_1$ being the shear modulus. The $i$th family of collagen fibers is modeled by an exponential function $G^{f_i}_{ich}$. The unit vector $\bm a_i$ characterizes the mean orientation of the fiber, and $\kappa_d$ is a dispersion parameter that characterizes the distribution of the collagen fibers. In this study, we assume the mean orientation of the two families of fibers has no component in the radial direction and is completely determined by $\varphi$, the angle between the fiber orientation and the loading direction. For a circumferential specimen, the tensile load is along the circumferential direction and $\varphi = 49.98^\circ$; correspondingly, for an axial specimen, the value of $\varphi$ is $40.02^\circ$. We consider only one-eighth of the specimen by applying symmetry boundary conditions. On the loading surface, a master-slave relation is enforced for the nodes to ensure that the surface moves only in the loading direction. The loading traction is applied gradually and reaches $2$ N in 200 seconds. We simulate the problem with a fixed time step size $\Delta t = 2.0\times 10^{-2}$ s. Again, we fix the value of $\mathsf a$ and $\mathsf b$ to be $1$ and $0$ and use $\textup{tol}_{\textup{R}} = 10^{-3}$ and $\textup{tol}_{\textup{A}} = 10^{-6}$ as the stopping criteria. Three different meshes are used for the proposed formulation: mesh 1 consists of $61440$ elements with $\mathsf p=2$, mesh 2 consists of $120$ elements with $\mathsf p=1$, and mesh 3 consists of $120$ elements with $\mathsf p=2$. In Figure \ref{fig:tensile_stress_strain}, the load-displacement curves calculated by the three different meshes for the circumferential and axial specimen are plotted. It is hard to distinguish the results in Figure \ref{fig:tensile_stress_strain} (a). In Figure \ref{fig:tensile_stress_strain} (b), we provide a detailed comparison near the tensile load $0.35$ N. The curve obtained from mesh 3 is still very close to the reference solid line, indicating improved accuracy with increasing polynomial degree. For comparison purposes, we present the stress results calculated by the VMS formulation \cite{Liu2018} with linear tetrahedral elements using two different spatial resolutions (see Table 2). From Figures \ref{fig:tensile_test_stress_axial} and \ref{fig:tensile_test_stress_circum}, we observe that the essential feature of the Cauchy stress is captured in mesh 2, although there are slight oscillations near the corners. The results calculated from the mesh 1 and mesh 3 are almost indistinguishable, indicating that increasing the polynomial degree improves the accuracy of the stress results. In contrast, the stress is poorly resolved in mesh 4 due to the low-order elements. The results of mesh 5 illustrate that mesh refinement helps improve the quality of the stress results. Yet, one can still observe a discontinuous pattern and oscillations of the stress profile.

\begin{table}[htbp]
  \centering
  \begin{tabular}{ m{.38\textwidth}   m{.56\textwidth} }
    \hline
    \begin{minipage}{.38\textwidth}
      \includegraphics[angle=0, trim=40 60 40 20, clip=true, scale = 0.33]{./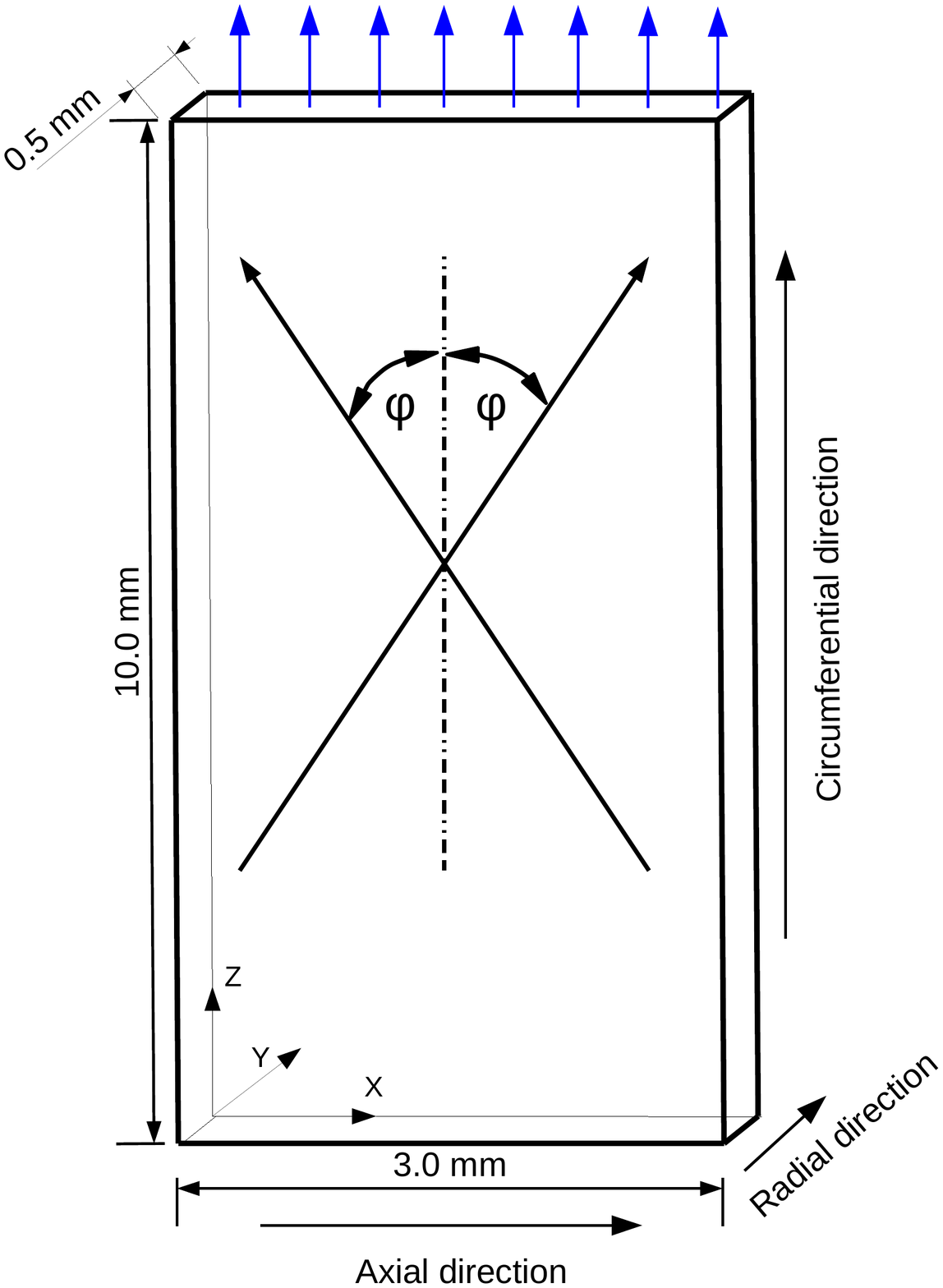}
    \end{minipage}
    &
    \begin{minipage}[t]{.56\textwidth}
    \begin{itemize}
      \item[] Material properties:
      \item[] $G(\tilde{\bm C},p) = G_{ich}^{g}( \tilde{\bm C} ) + \sum_{i=1,2} G_{ich}^{f_i}( \tilde{\bm C} ) + \frac{p}{\rho_0}$,
      \item[] $G_{ich}^{g}( \tilde{\bm C} ) = \frac{c_1}{2\rho_0} \left( \tilde{I}_1 -3 \right)$,
      \item[] $G_{ich}^{f_i}( \tilde{\bm C} ) = \frac{k_1}{2k_2\rho_0} \left( e^{k_2 \tilde{E}_i^2} - 1 \right)$,
      \item[] $\tilde{E}_i := \bm H_i : \tilde{\bm C} - 1$,
      \item[] $\bm H_i := k_d \bm I + (1-3k_d)(\bm a_i \otimes \bm a_i)$, 
	  \item[] $\rho_0 = 1.0 \times 10^3$ kg/m$^3$, $c_1 = 7.64 \times 10^3$ Pa,
	  \item[] $k_1 = 9.966 \times 10^{5}$ Pa, $k_2 = 524.6$, $k_d =0.226$.
      \end{itemize}
    \end{minipage}   
    \\ 
    \hline
  \end{tabular}
  \caption{Three-dimensional tensile test: geometry setting and material properties.} 
\label{table:3d_tensile_benchmark_geometry}
\end{table}

\begin{figure}
\begin{center}
\begin{tabular}{cc} 
\includegraphics[angle=0, trim=30 0 50 10, clip=true, scale = 0.28]{./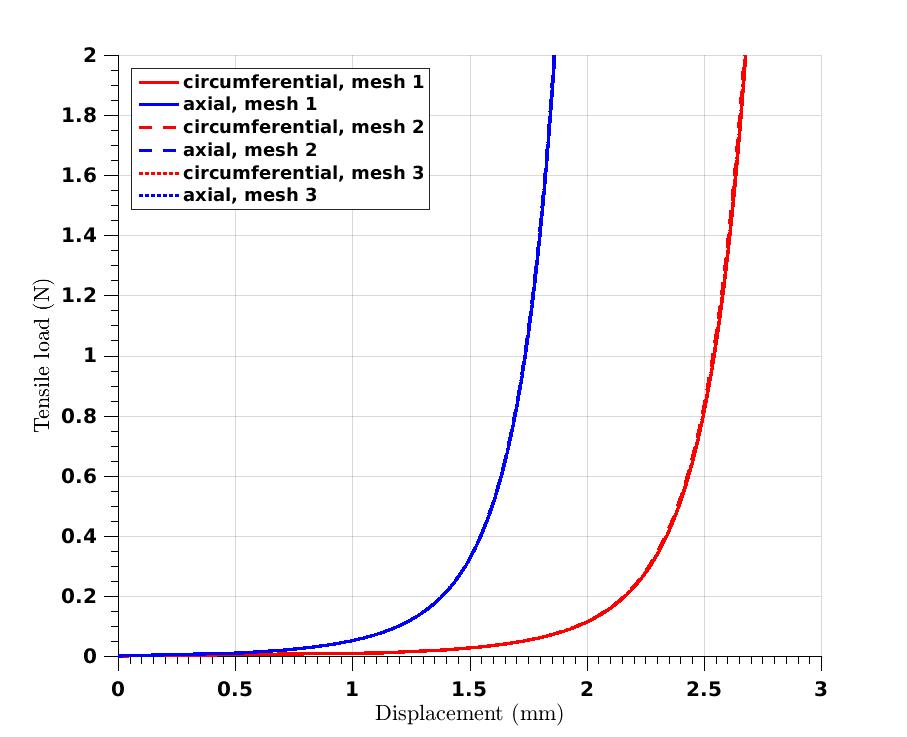} &
\includegraphics[angle=0, trim=20 0 30 10, clip=true, scale = 0.28]{./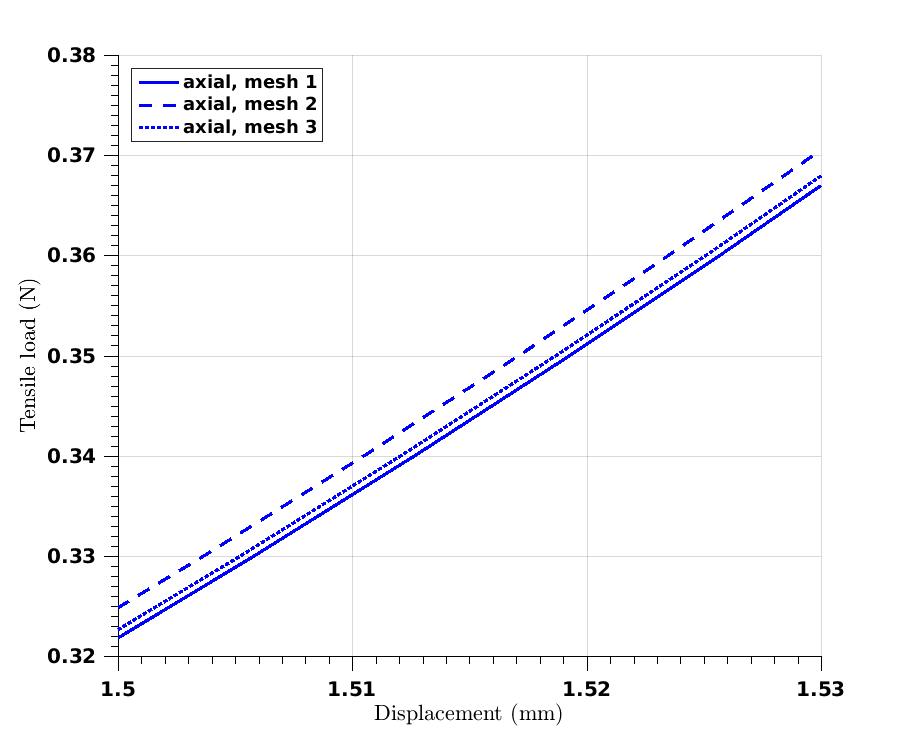} \\
(a) & (b)
\end{tabular}
\caption{Three-dimensional tensile test: (a) computed load-displacement curves of the circumferential (red) and axial specimens (blue) using different meshes; (b) detailed comparison of the computed load-displacement curves near the tensile load $0.35$ N.} 
\label{fig:tensile_stress_strain}
\end{center}
\end{figure}

\begin{table}[htbp]
\begin{center}
\tabcolsep=0.3cm
\renewcommand{\arraystretch}{1.5}
\begin{tabular}{ c c c c c c }
\hline
Mesh & 1 & 2 & 3 & 4 & 5 \\
\hline
$n_{en}$ & 61440 & 120 & 120 & 5760 & 90000 \\
$n_{eq}$ & 1785024 & 5091 & 7584 & 6396 & 75144 \\
\hline 
\end{tabular}
\end{center}
\caption{The number of elements $n_{en}$ and the number of equations $n_{eq}$ in the system \eqref{eq:pre-multi-corrector-stage-5-matrix-problems} for the five different meshes. Meshes 4 and 5 consist of linear tetrahedral elements.}
\label{table:tensile_mesh}
\end{table}

\begin{figure}
\begin{center}
\begin{tabular}{ccccc} 
\includegraphics[angle=0, trim=450 0 450 0, clip=true, scale = 0.32]{./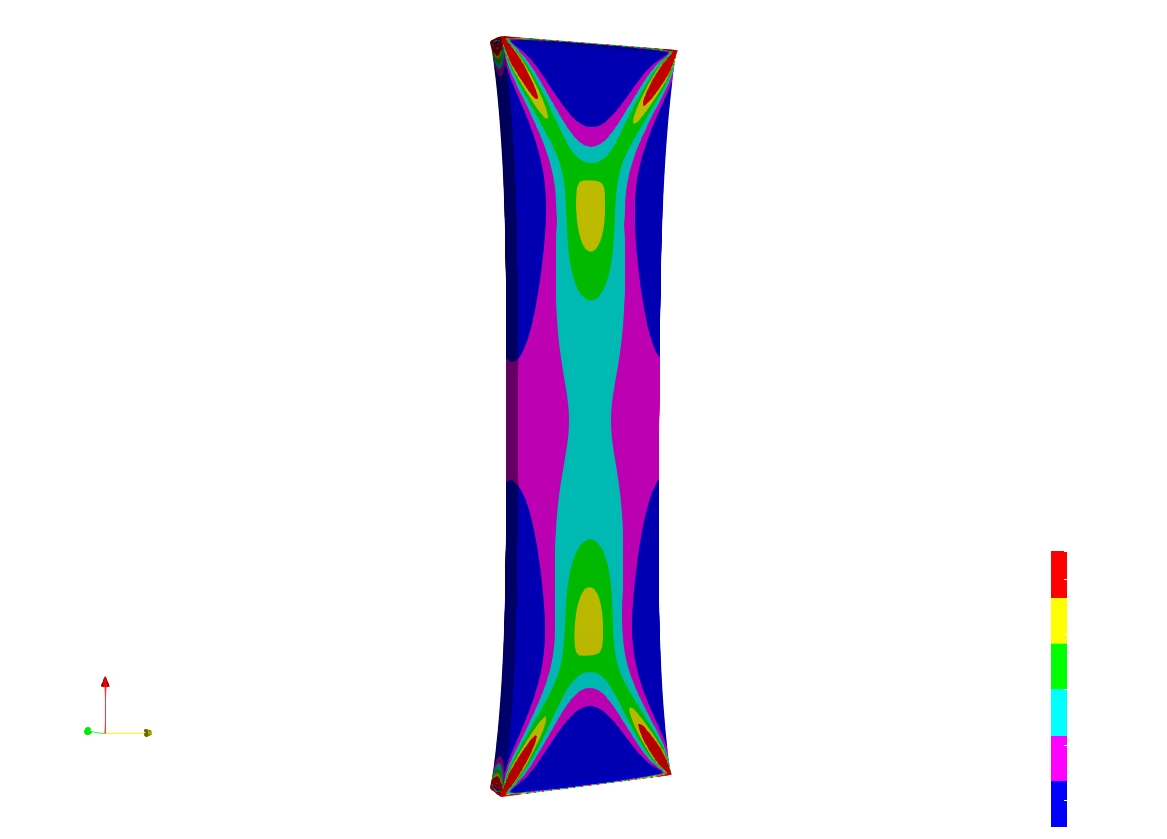} &
\includegraphics[angle=0, trim=450 0 450 0, clip=true, scale = 0.32]{./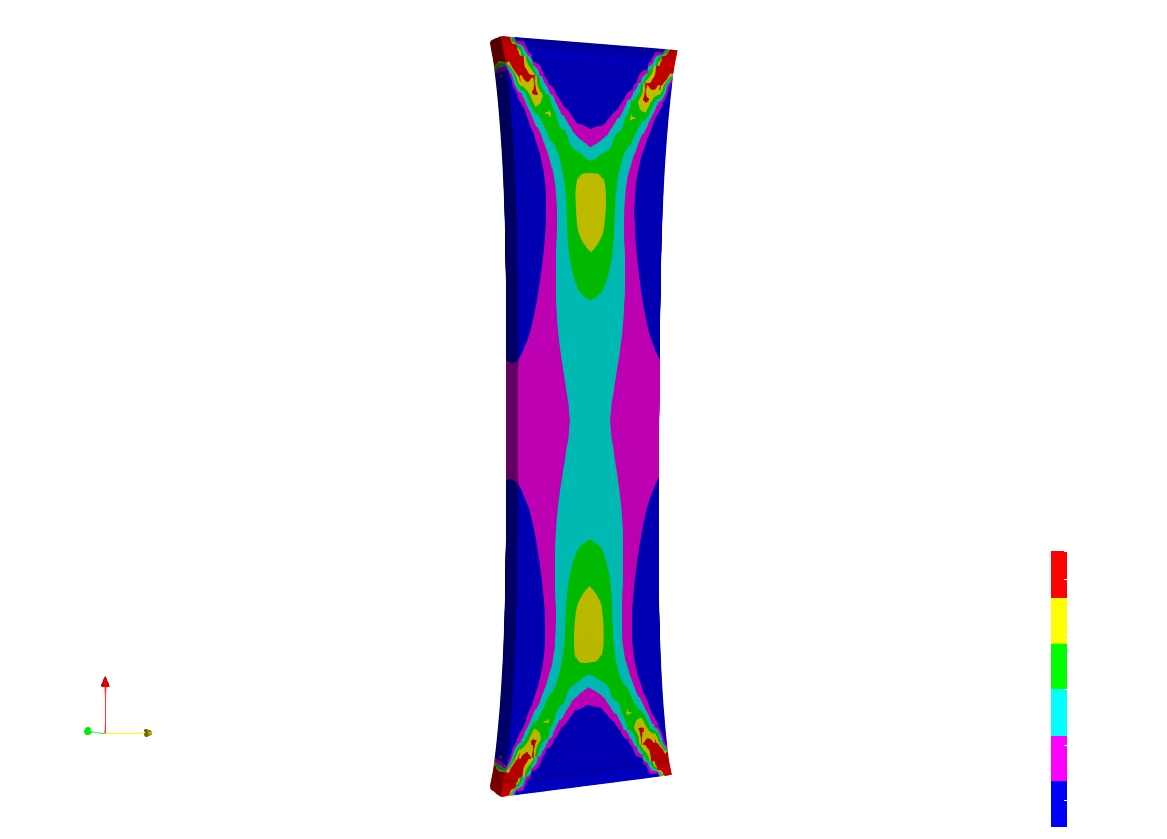} &
\includegraphics[angle=0, trim=450 0 450 0, clip=true, scale = 0.32]{./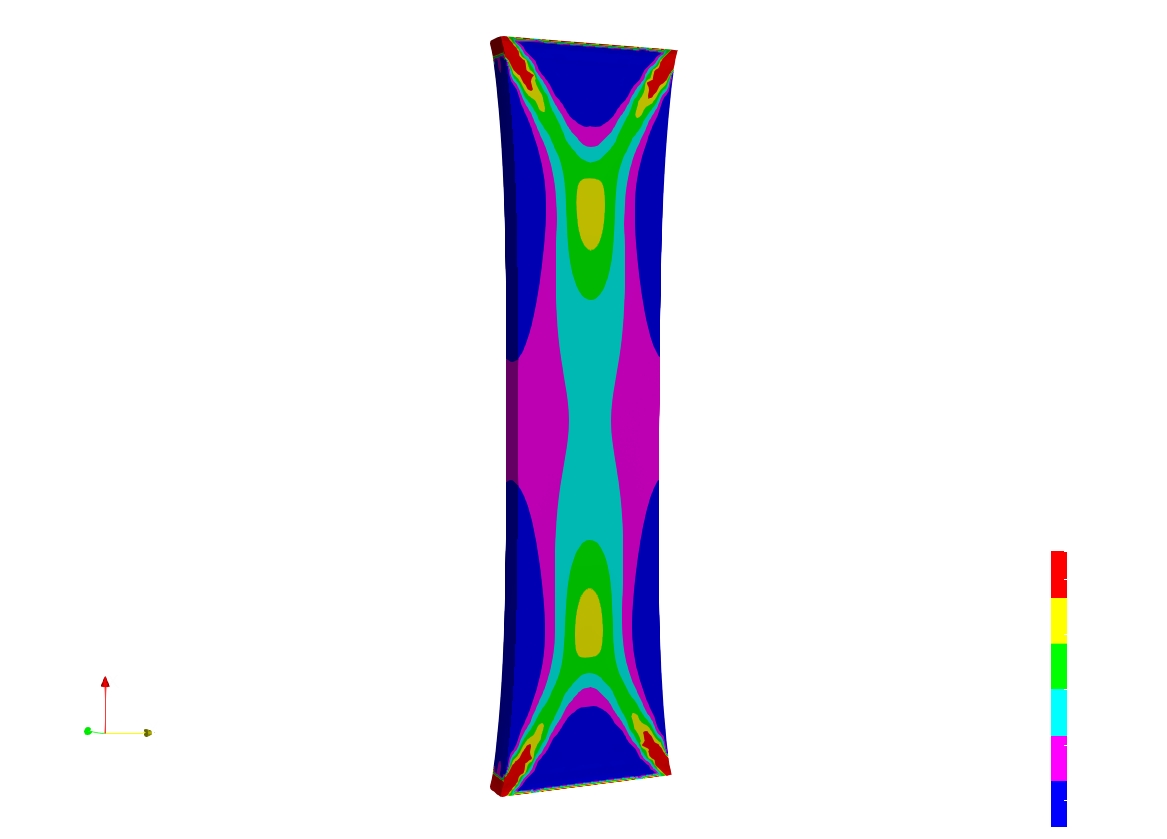} &
\includegraphics[angle=0, trim=830 0 830 0, clip=true, scale = 0.223]{./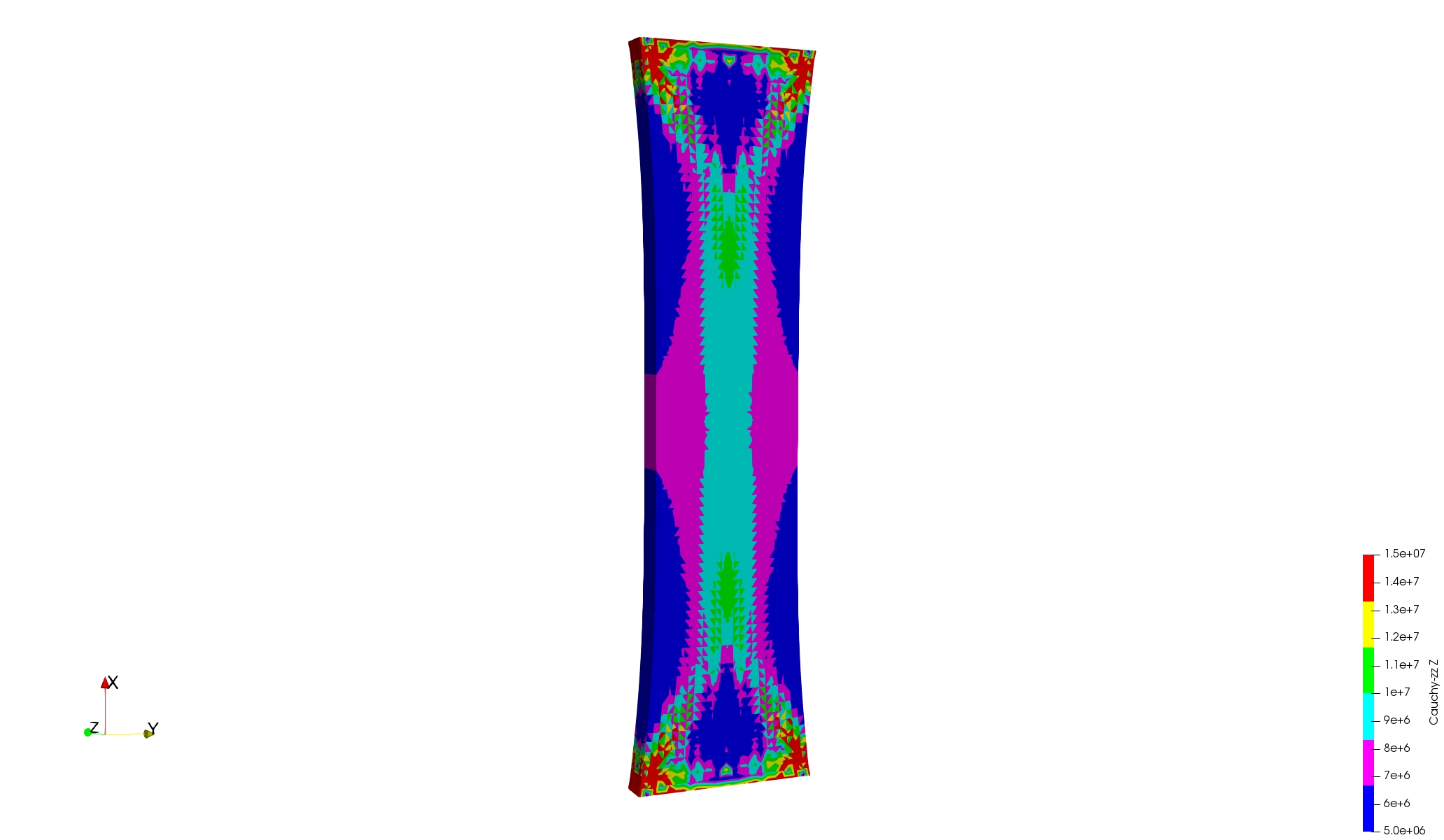} &
\includegraphics[angle=0, trim=830 0 830 0, clip=true, scale = 0.22]{./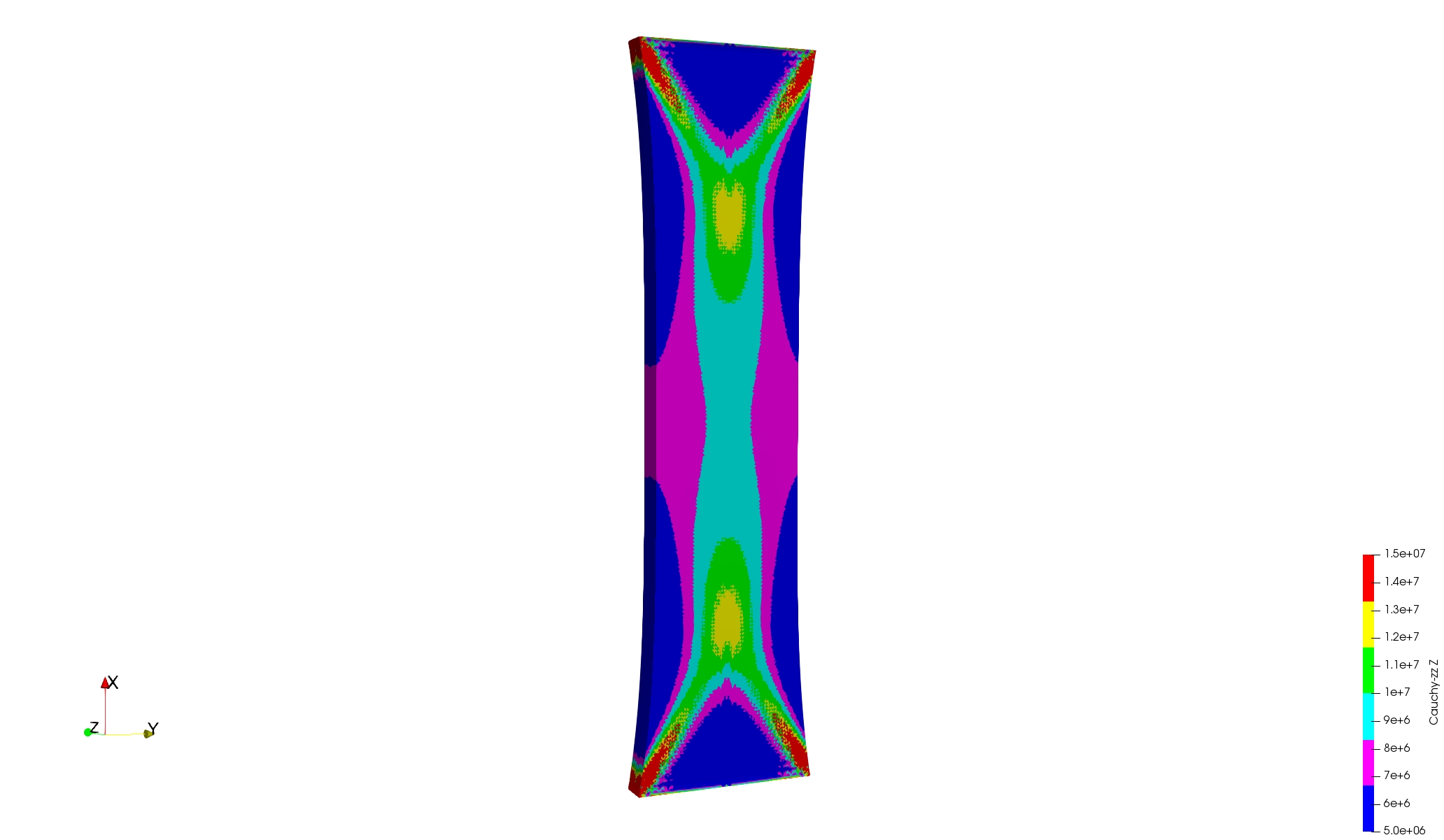} \\
(a) & (b) & (c) & (d) & (e) \\
\multicolumn{5}{c}{
\includegraphics[angle=0, trim=0 0 350 700, clip=true, scale = 0.3]{./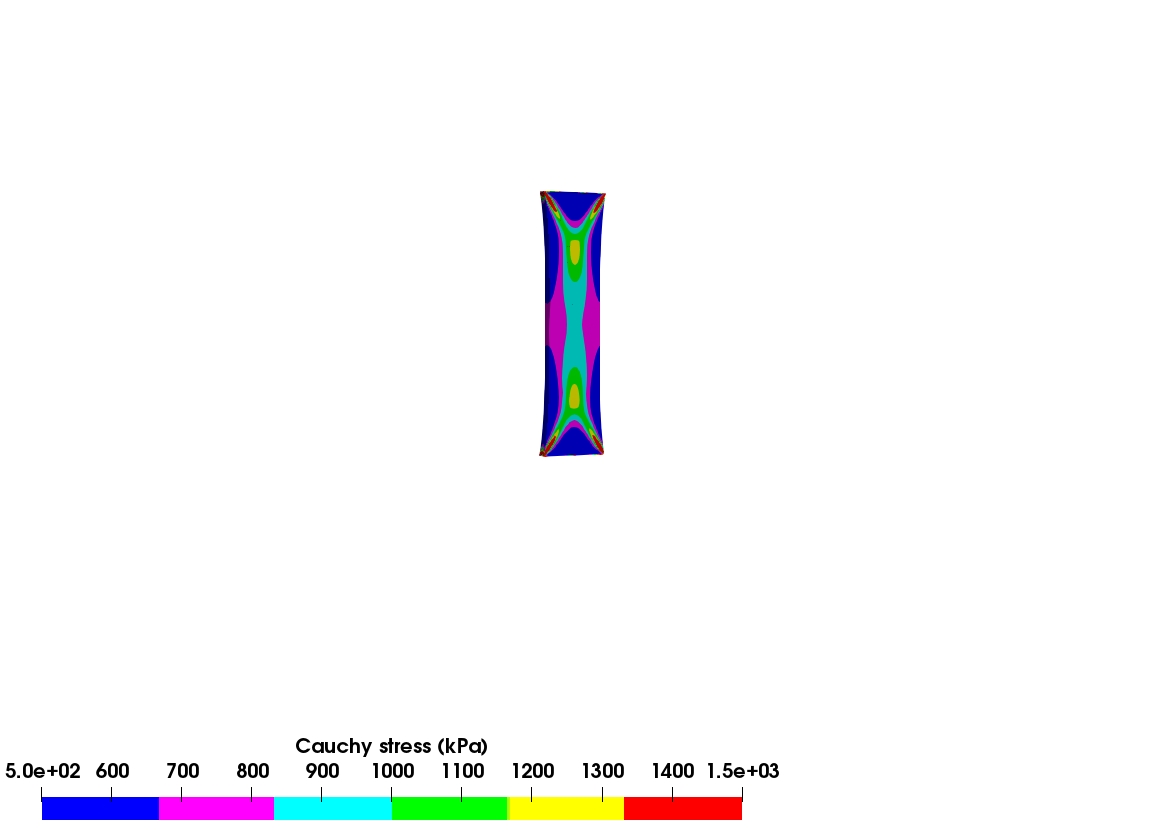}
}
\end{tabular}
\caption{Three-dimensional tensile test: $\bm \sigma_{zz}$ for the axial specimen calculated by (a) mesh 1, (b) mesh 2, (c) mesh 3, (d) mesh 4, and (e) mesh 5 on the deformed configurations at the tensile load $1$ N.} 
\label{fig:tensile_test_stress_axial}
\end{center}
\end{figure}

\begin{figure}
\begin{center}
\begin{tabular}{ccccc} 
\includegraphics[angle=0, trim=450 0 450 0, clip=true, scale = 0.32]{./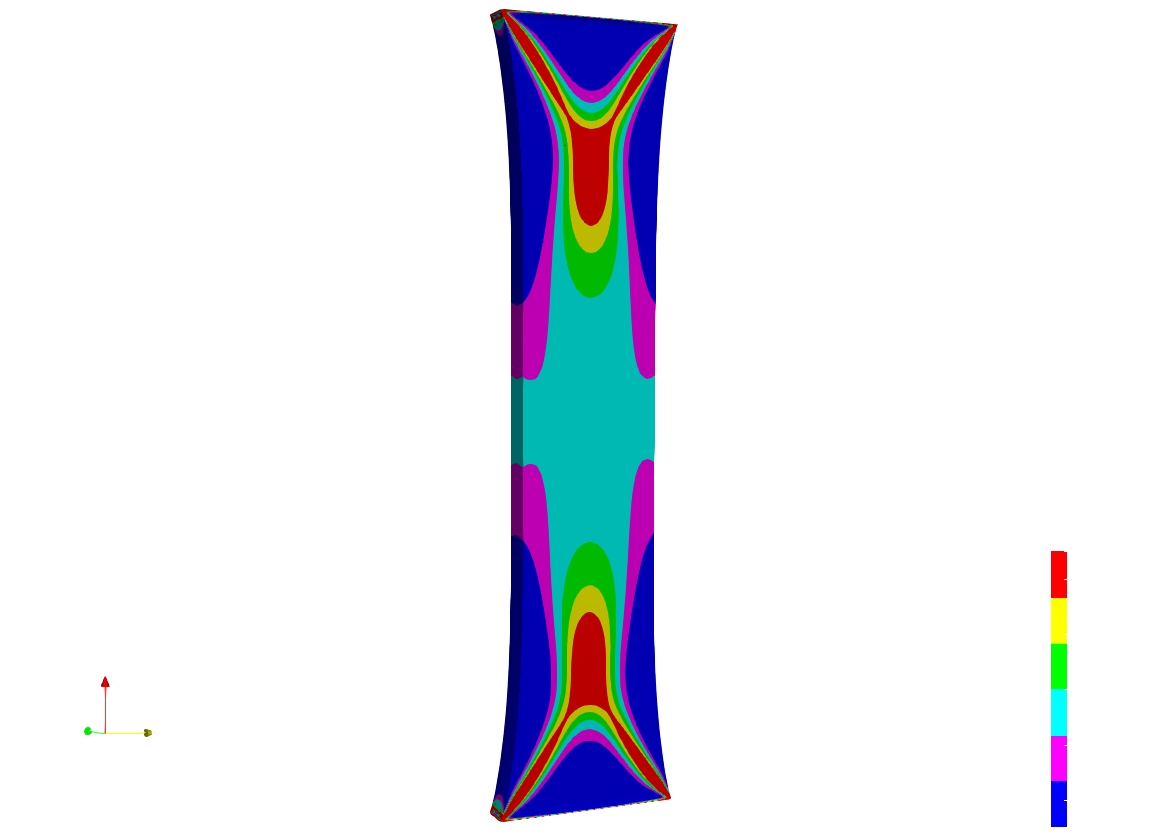} &
\includegraphics[angle=0, trim=450 0 450 0, clip=true, scale = 0.32]{./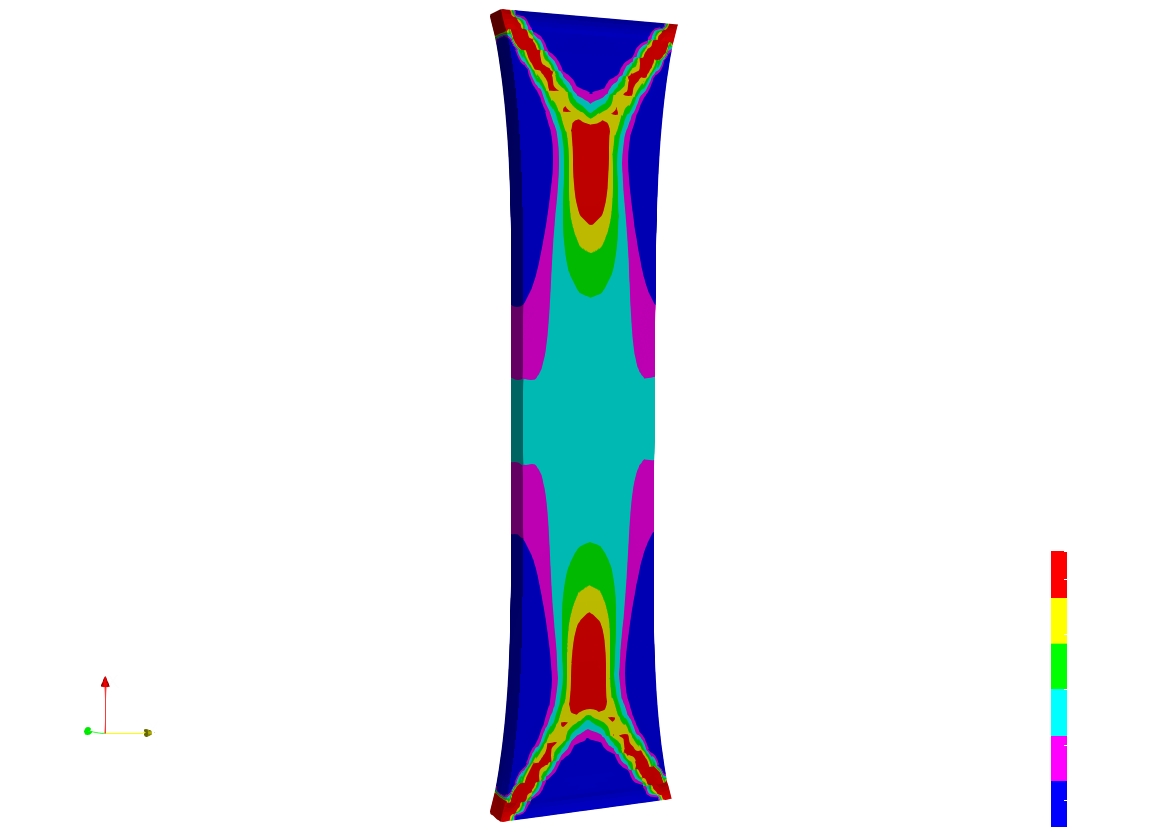} &
\includegraphics[angle=0, trim=450 0 450 0, clip=true, scale = 0.32]{./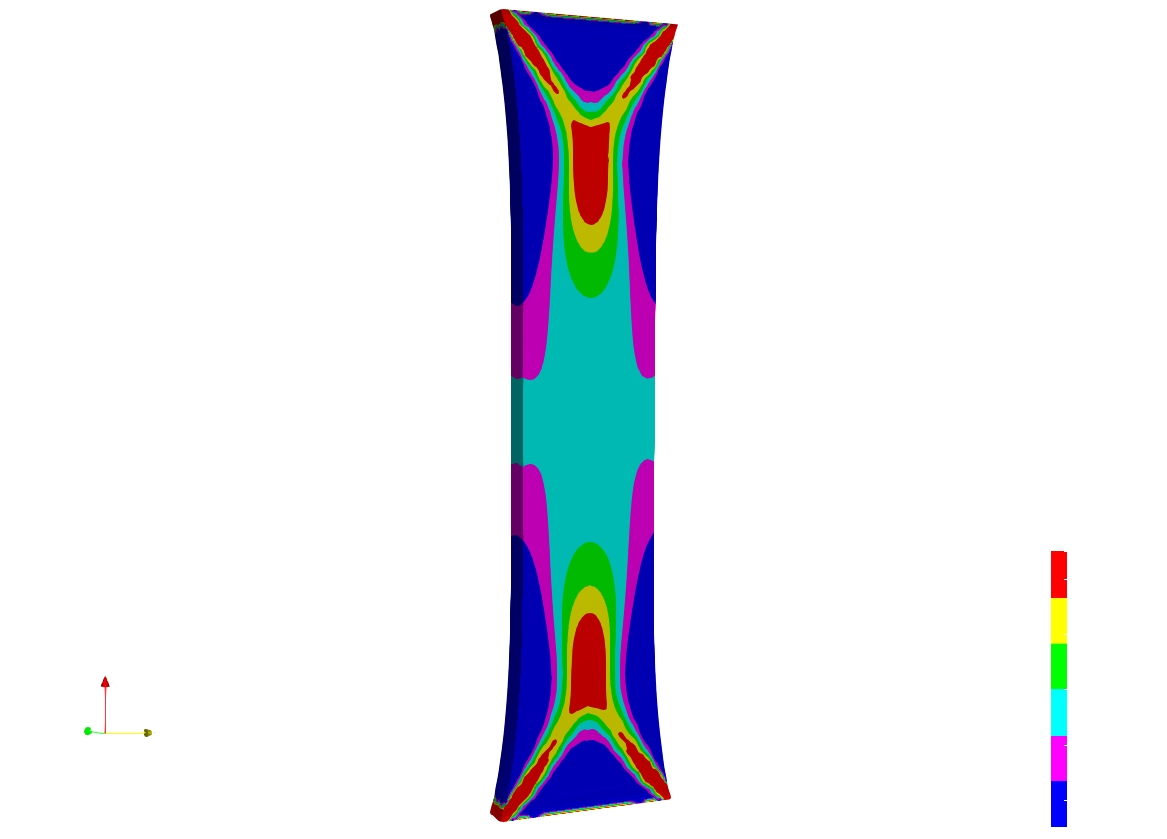} &
\includegraphics[angle=0, trim=830 0 830 0, clip=true, scale = 0.223]{./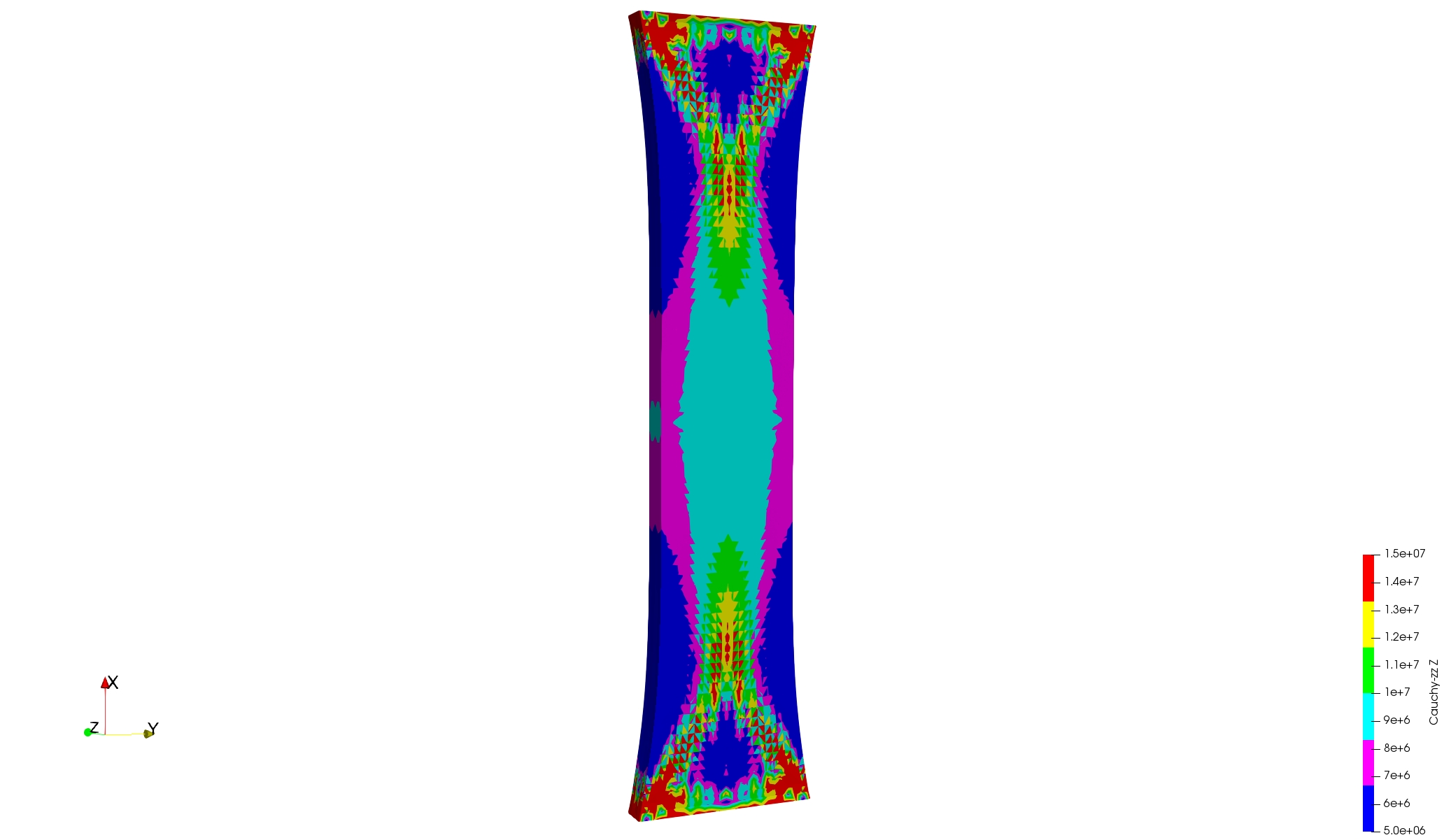} &
\includegraphics[angle=0, trim=880 0 880 0, clip=true, scale = 0.22]{./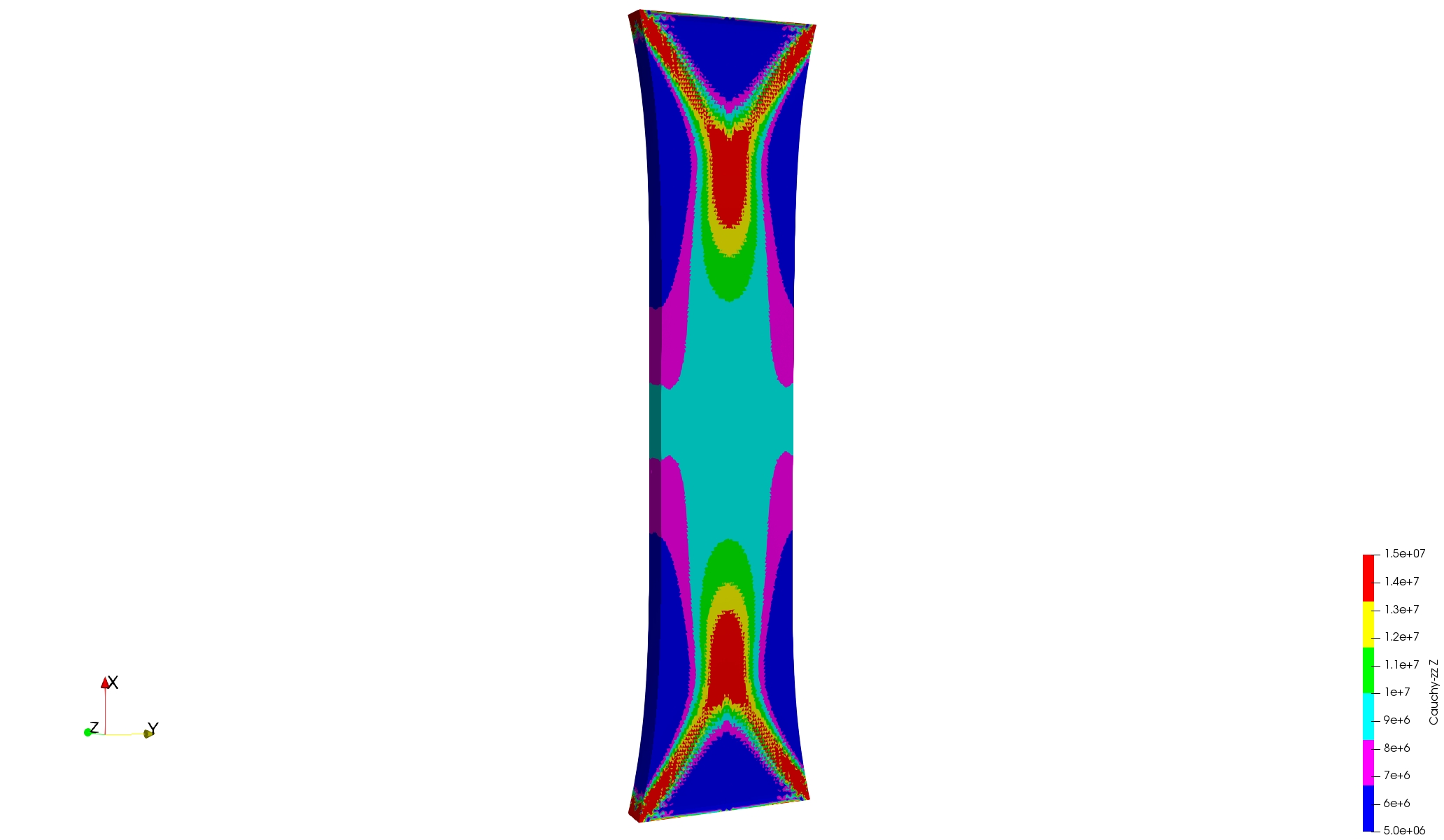} \\
(a) & (b) & (c) & (d) & (e) \\
\multicolumn{5}{c}{
\includegraphics[angle=0, trim=0 0 350 700, clip=true, scale = 0.3]{./figures/tensile_legend.jpg}
}
\end{tabular}
\caption{Three-dimensional tensile test: $\bm \sigma_{zz}$ for the circumferential specimen calculated by (a) mesh 1, (b) mesh 2, (c) mesh 3, (d) mesh 4, and (e) mesh 5 on the deformed configurations at the tensile load $1$ N.} 
\label{fig:tensile_test_stress_circum}
\end{center}
\end{figure}

\subsection{Three-dimensional beam bending}
In this example, we present a three-dimensional beam vibration problem to evaluate the performance of the elastodynamics formulation in a bending dominated scenario \cite{Bonet2015}. The problem configuration as well as the material properties are illustrated in Table \ref{table:3d_beam_bending_benchmark_geometry}. The beam is fully clamped at the base, and the other faces are specified by zero tractions. The body is initially stress free with zero displacement. The vibration is initiated through an initial velocity
\begin{align*}
\bm V(\bm X, 0) = \left( V_0 \frac{Z}{L_0}, 0, 0 \right)^T, \quad V_0 = \frac53 \textup{m}/\textup{s}.
\end{align*}
This initial condition leads to an oscillatory motion of the beam. For the simulations, we choose $\mathsf p = 1$, $\mathsf a = 1$, and $\mathsf b = 0$ for the discrete function spaces. We use $\textup{tol}_{\textup{R}} = 10^{-8}$ and $\textup{tol}_{\textup{A}} = 10^{-8}$ as the stopping criteria. The numerical results show the deformation state of the beam calculated from the two different meshes are indistinguishable, suggesting a coarse mesh with $\Delta x = L_0 / 2$ is capable of accurately describing the beam dynamics (Figure \ref{fig:beam_snapshot}). Since the boundary data is time independent and the body force and surface tractions are zero, the total energy of the beam is conserved according to Proposition \ref{prop:energy_stability}. We observe that the total energy is well-preserved up to $T = 10$ s (Figure \ref{fig:beam_energy} (a)). From the periodic pattern of the kinetic and potential energies, we obtain an average period of the oscillation is 0.9018 s. To better illustrate the energy conservation, we plot the relative errors of the energy in Figure \ref{fig:beam_energy} (b), using three different spatial meshes. Interestingly, the error of the total energy achieves its maximum value when the beam reaches its largest deformation. For the coarsest mesh ($\Delta x = L_0 /2$), the error accumulates slightly over time, and we can see that the relative error reaches about one percent at around $9.5$ s. We also observe that the spatial mesh refinement helps reduce the error of the total energy. For the meshes with $\Delta x = L_0 / 4$ and $\Delta x = L_0 / 6$, we do not observe a pile-up effect of the energy error. Also, the magnitude of the relative error is reduced with mesh refinement. In comparison with the previously published results \cite{Aguirre2014,Lahiri2005}, the new formulation enjoys a better discrete energy conservation property. 

\begin{table}[htbp]
  \centering
  \begin{tabular}{ m{.4\textwidth}   m{.5\textwidth} }
    \hline
    \begin{minipage}{.38\textwidth}
      \includegraphics[angle=0, trim=120 270 250 200, clip=true, scale = 0.7]{./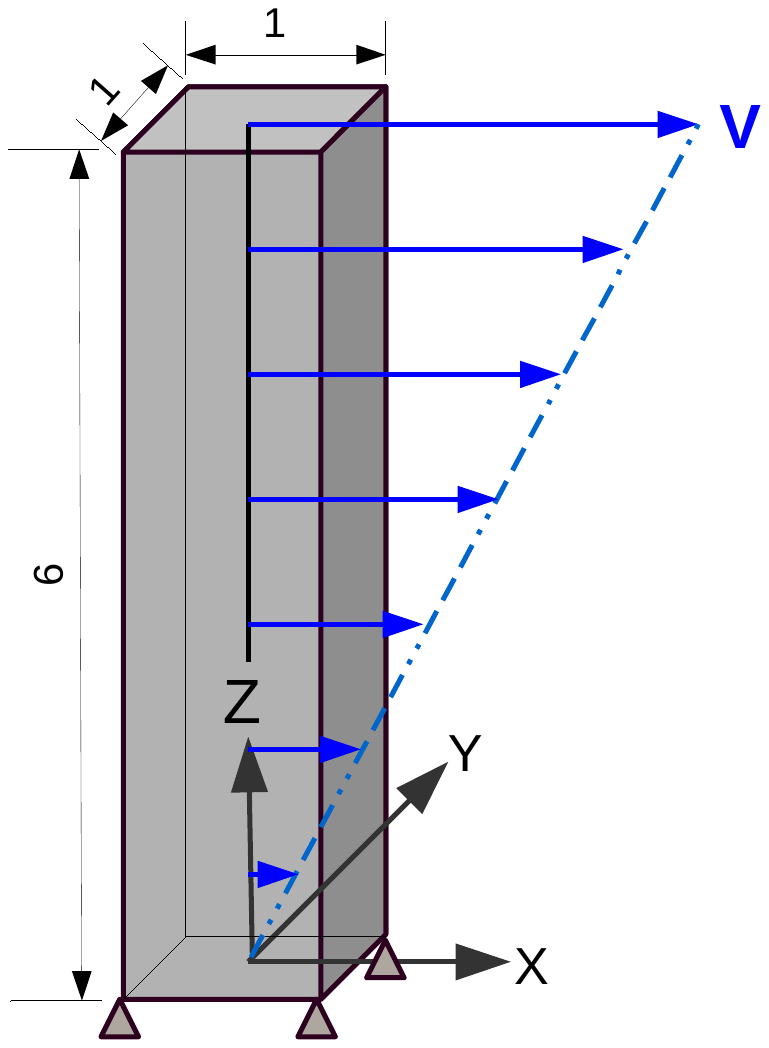}
    \end{minipage}
    &
    \begin{minipage}[t]{.48\textwidth}
      
      \begin{itemize}
      \item[] Material properties:
      \item[] $G_{ich}(\tilde{\bm C}) = \frac{c_1}{2\rho_0} \left( \tilde{I}_1 - 3 \right) + \frac{c_2}{2\rho_0} \left( \tilde{I}_2 - 3 \right)$,
\item[] $\rho_0 = 1.1 \times 10^3$ kg/m$^3$,  
      \item[] $c_1 = c_2 = E/6$,
      \item[] $E = 1.7\times 10^7$ Pa.
      \end{itemize}
      
      \begin{itemize}
      \item[] Reference scales:
      \item[] $L_0 = 1$ m, $M_0 = 1$ kg, $T_0 = 1$ s. 
      \end{itemize}
	     
    \end{minipage}   
    \\ 
    \hline
  \end{tabular}
  \caption{Three-dimensional beam bending: geometry setting, boundary conditions, and material properties.} 
\label{table:3d_beam_bending_benchmark_geometry}
\end{table}

\begin{figure}
\begin{center}
\begin{tabular}{cccc}
$t=0.31$ s & $t=0.50$ s & $t=0.91$ s & $t=1.23$ s \\ 
\includegraphics[angle=0, trim=500 10 550 0, clip=true, scale = 0.16]{./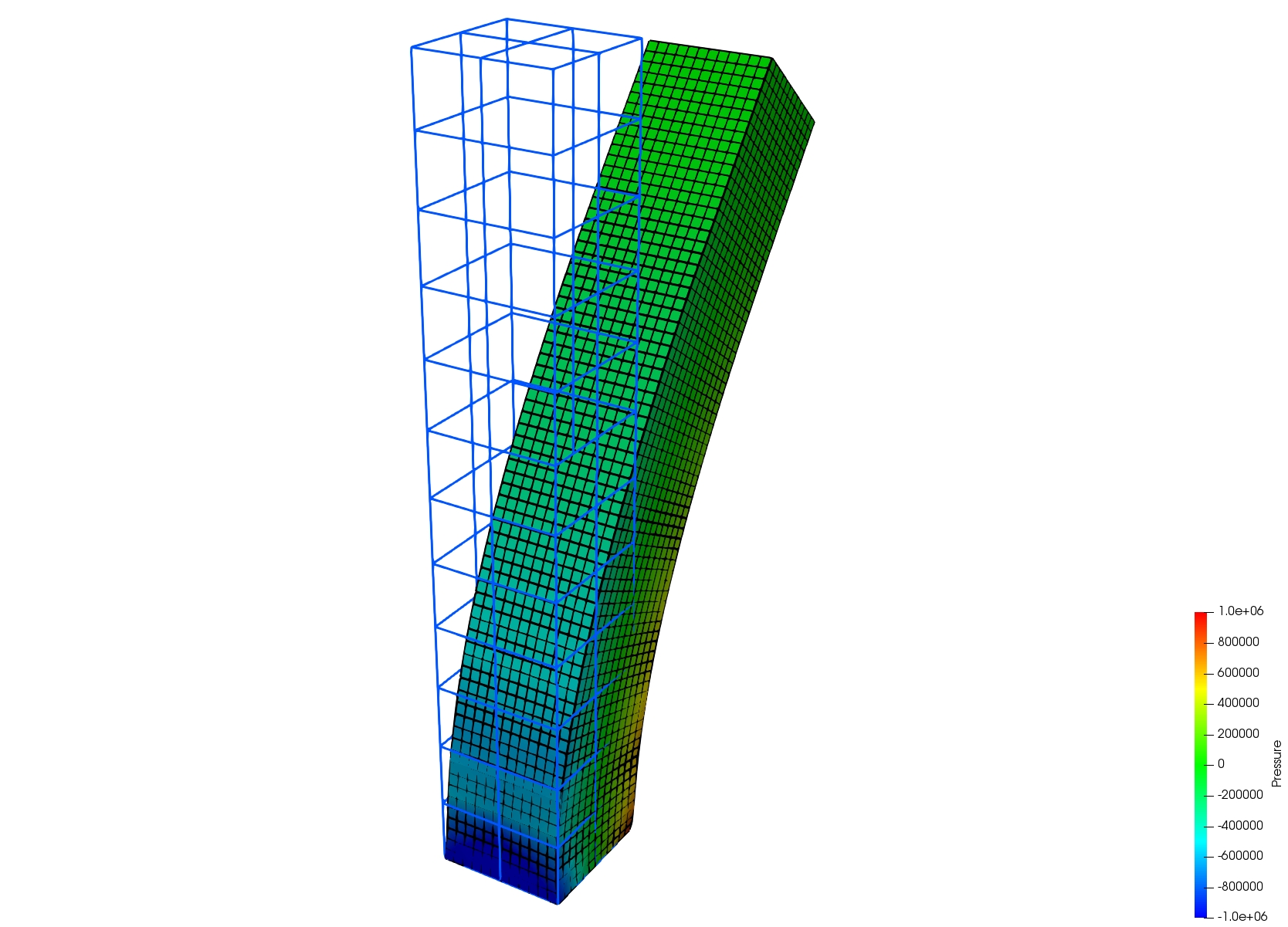} & 
\includegraphics[angle=0, trim=500 10 550 0, clip=true, scale = 0.16]{./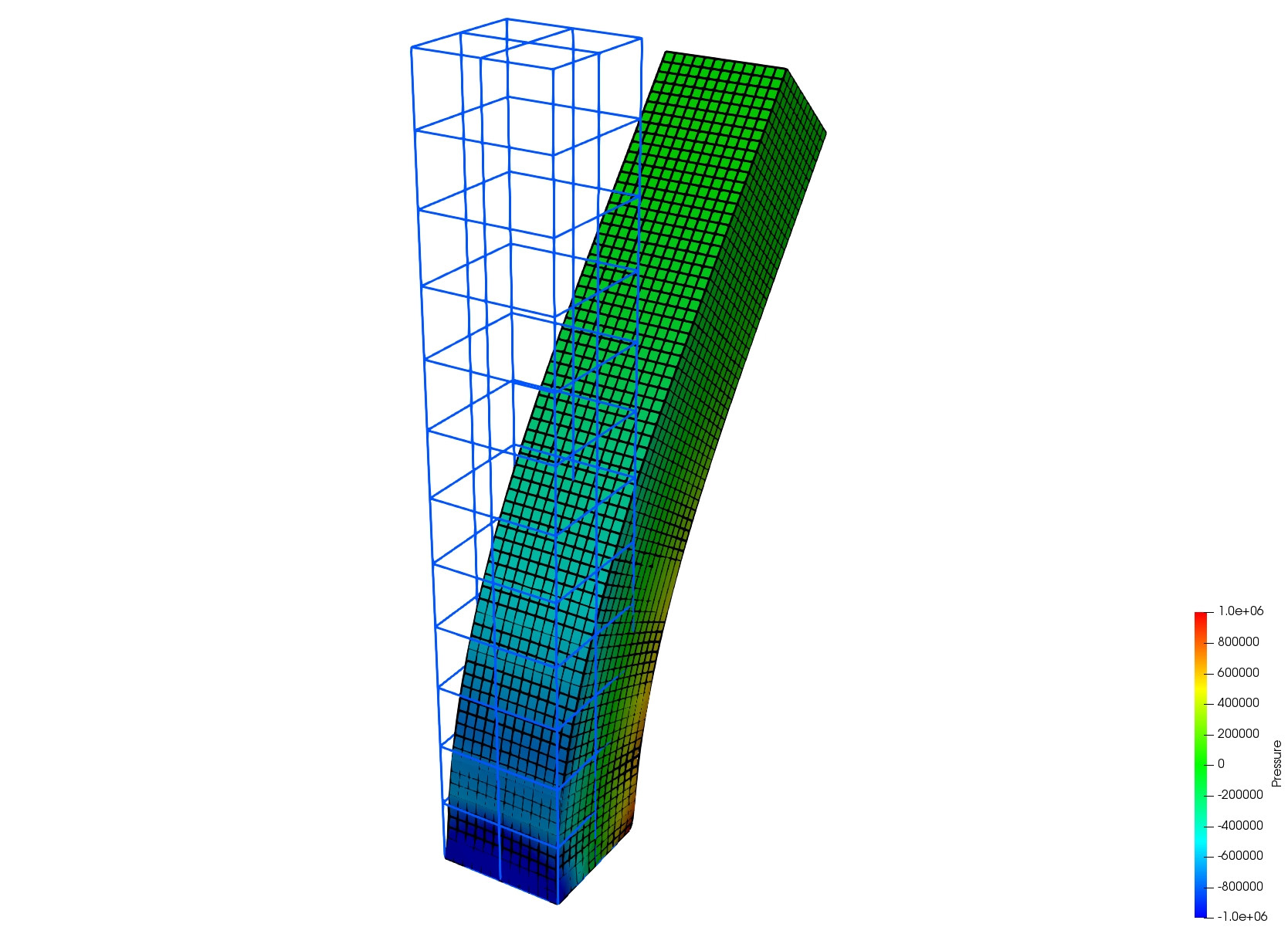} &
\includegraphics[angle=0, trim=500 10 800 0, clip=true, scale = 0.16]{./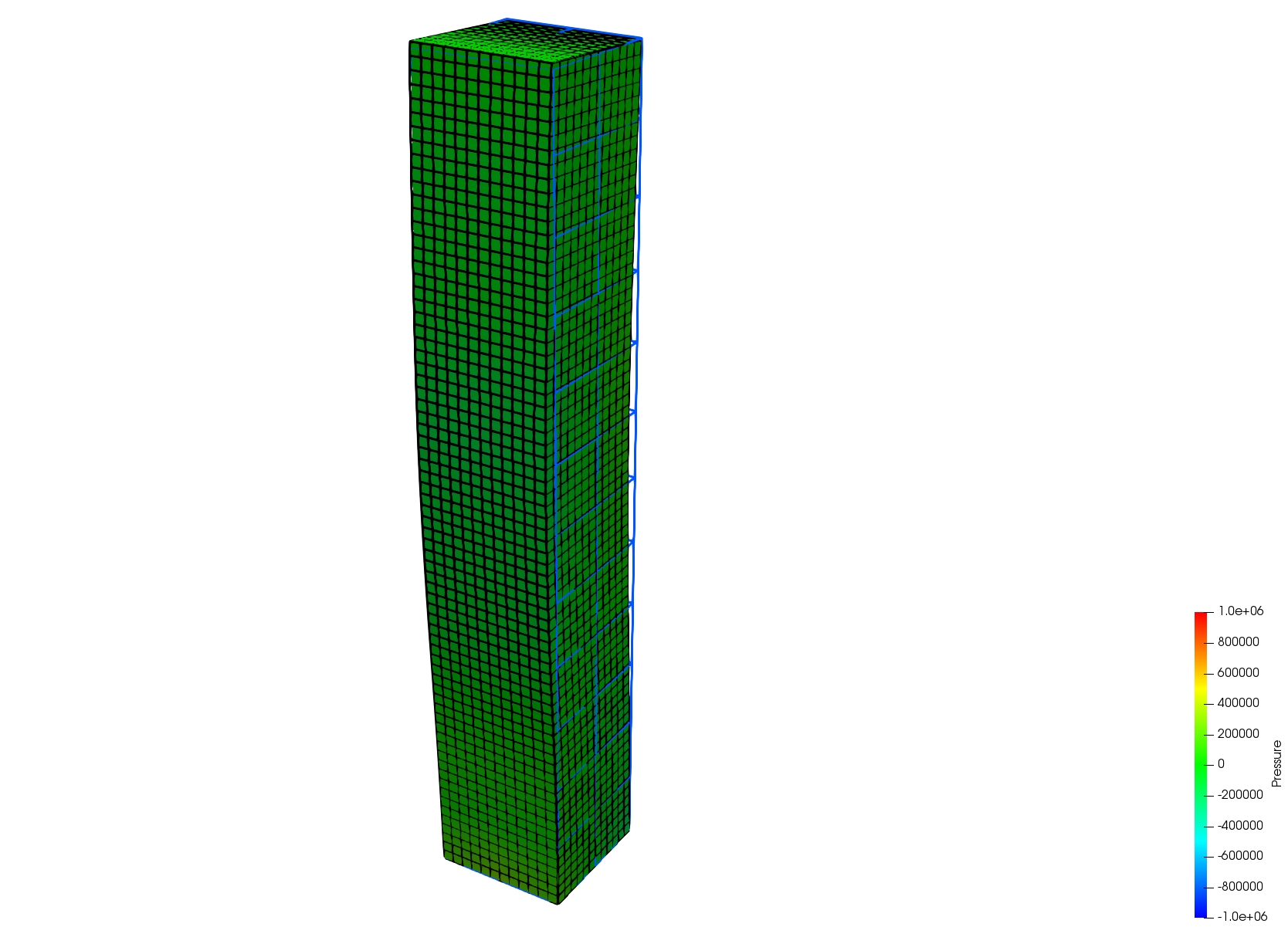} &
\includegraphics[angle=0, trim=120 10 800 0, clip=true, scale = 0.16]{./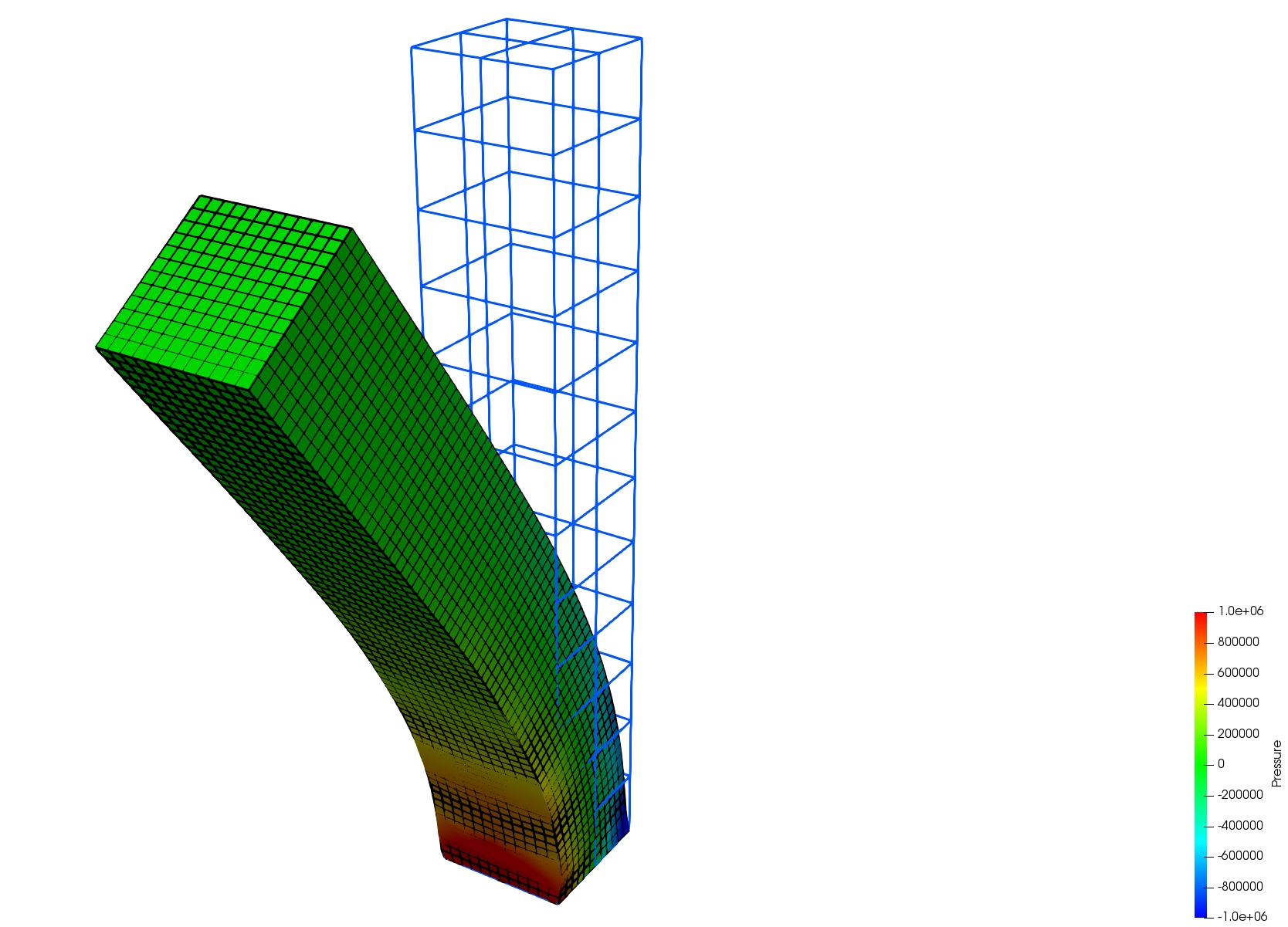} \\
\multicolumn{4}{c}{
\includegraphics[angle=0, trim=0 10 700 1050, clip=true, scale = 0.3]{./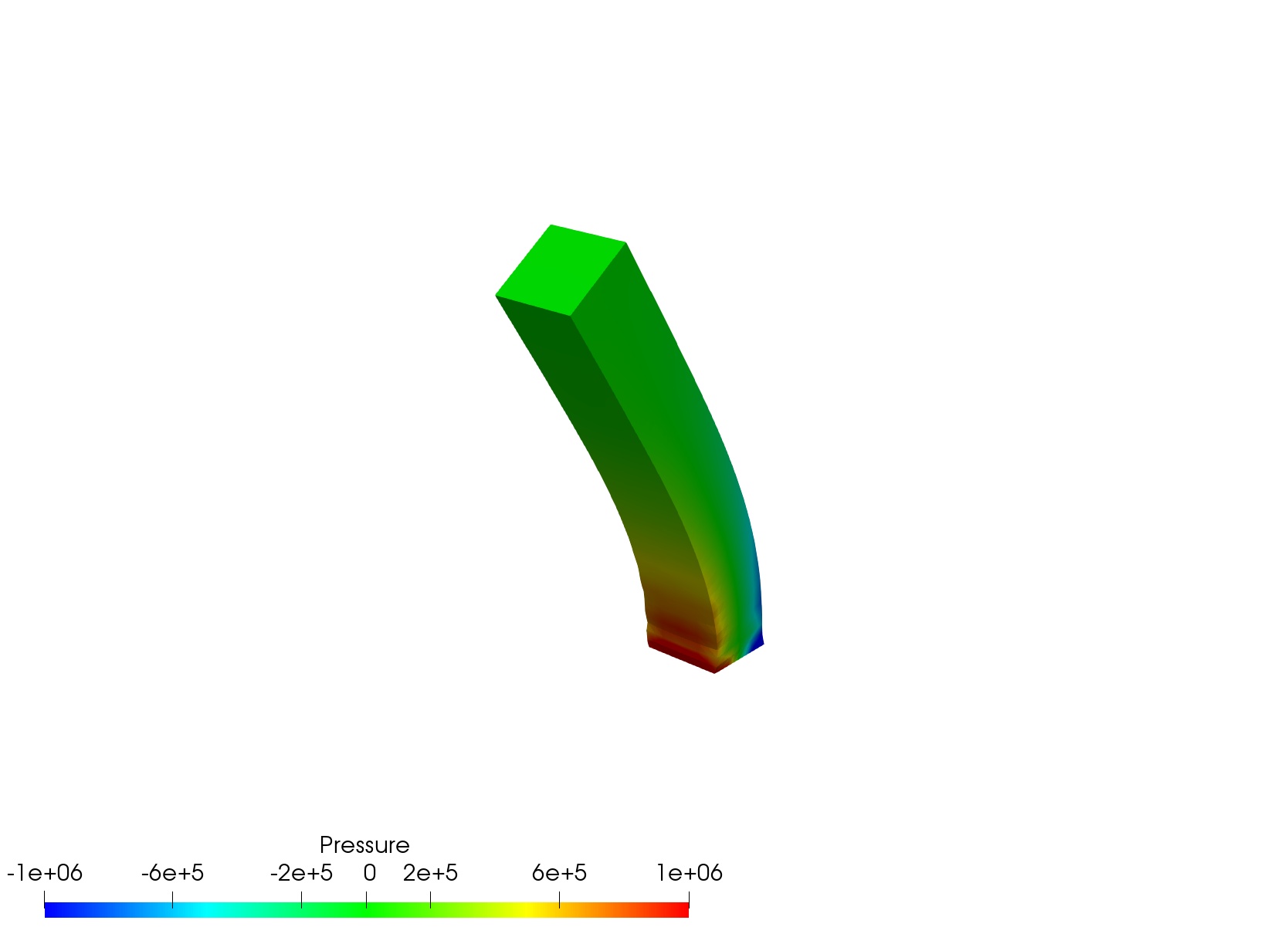}
}
\end{tabular}
\caption{Snapshots of the vibrating beam: The pressure field at different time steps using mesh size $\Delta x = L_0/2$ and time step size $\Delta t = 10^{-3}T_0$. The deformation states at the corresponding time steps using mesh size $\Delta x = L_0/12$ and time step size $\Delta t = 10^{-4}T_0$ are shown as the black grid. The light blue grid shows the mesh with size $\Delta x = L_0/2$ at time $t=0$.} 
\label{fig:beam_snapshot}
\end{center}
\end{figure}

\begin{figure}
\begin{center}
\begin{tabular}{c}
\includegraphics[angle=0, trim=85 85 120 80, clip=true, scale = 0.45]{./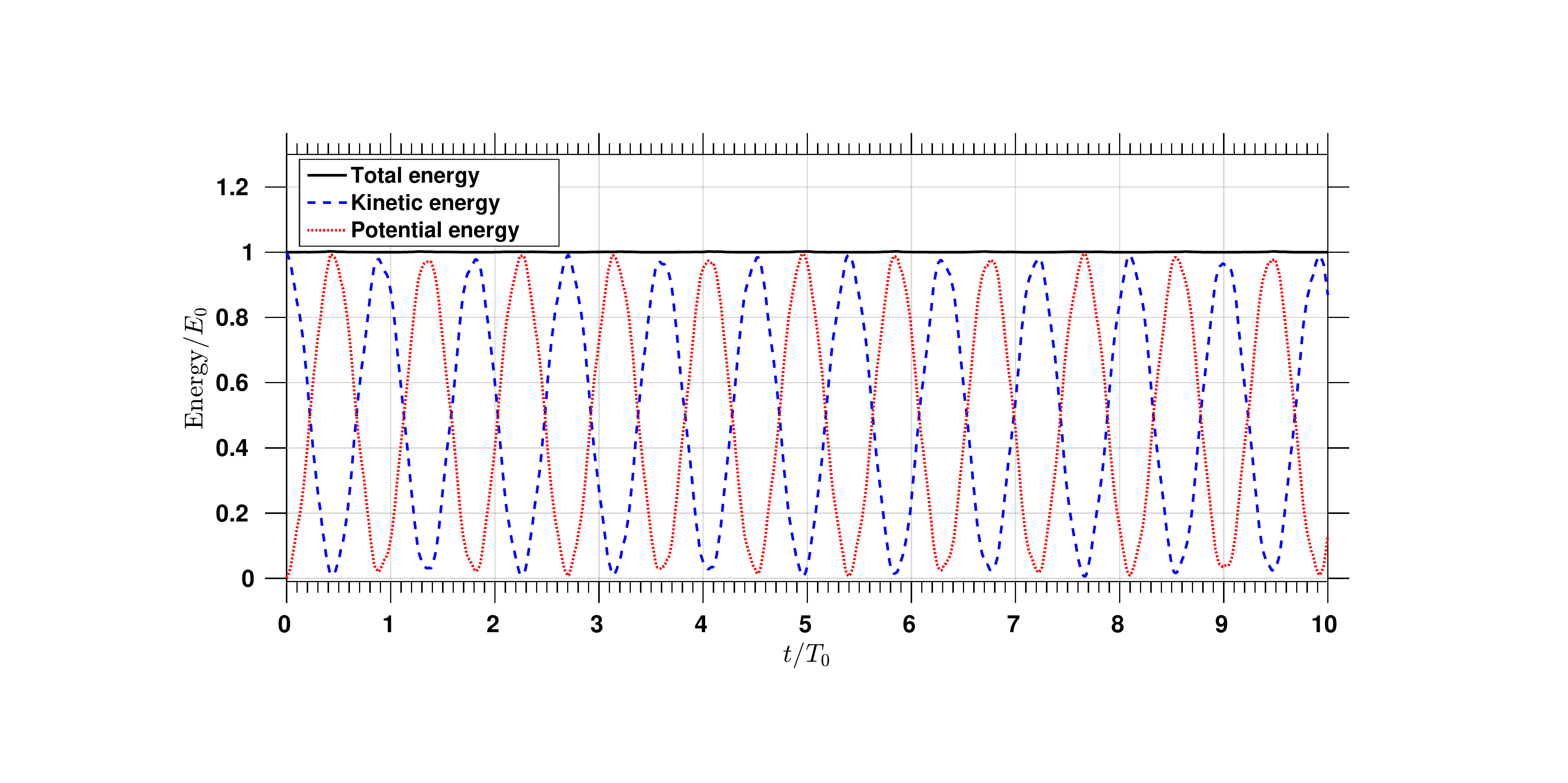} \\
(a) \\
\includegraphics[angle=0, trim=85 85 120 80, clip=true, scale = 0.45]{./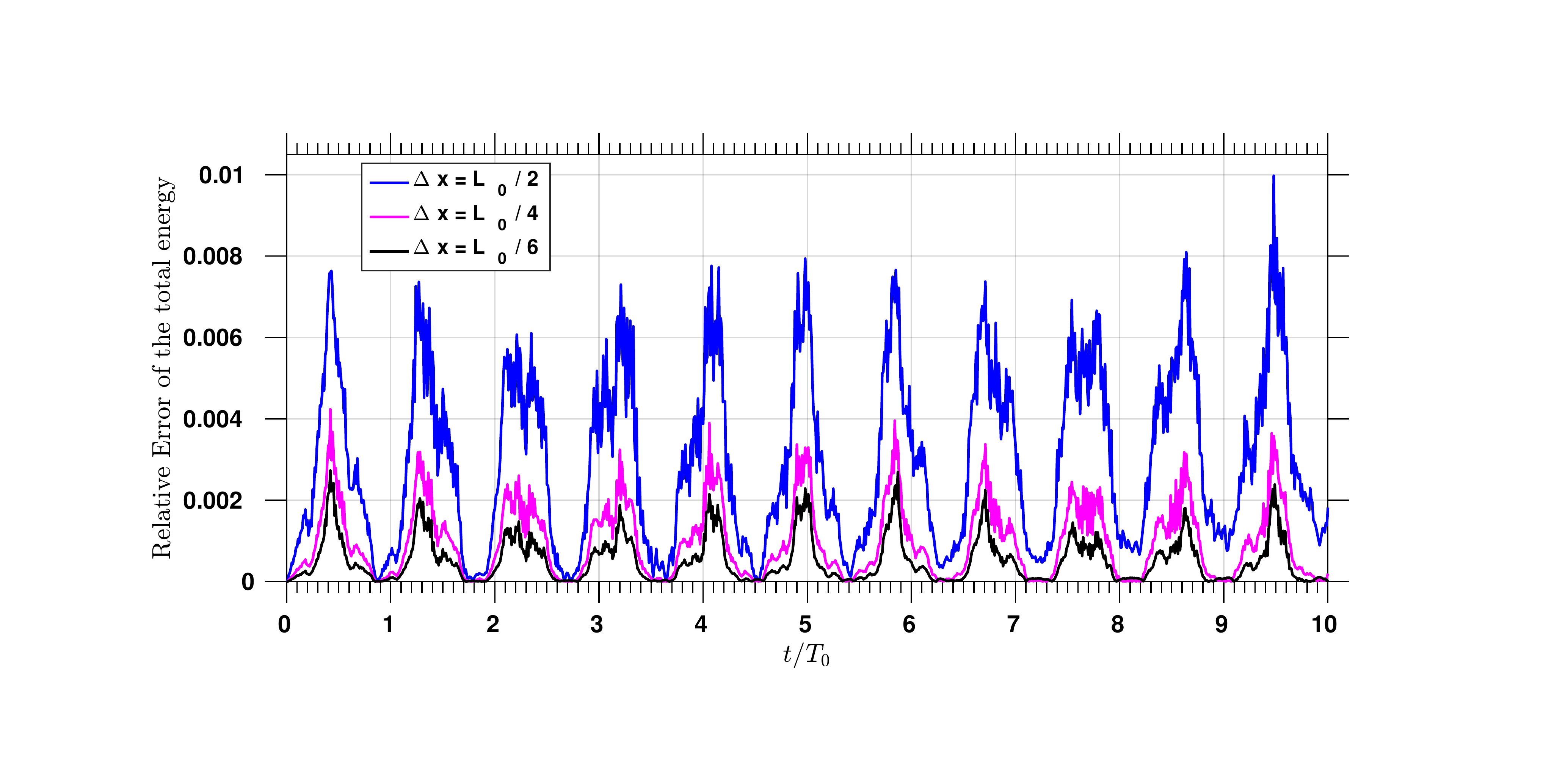}\\
(b)
\end{tabular}
\caption{(a) The total, kinetic, and potential energies over time with $\Delta x = L_0 / 6$; (b) The relative error of the total energy over time. The simulations are performed with $\mathsf p = 1$, $\mathsf a=1$, $\mathsf b = 0$, and $\Delta t = 2\times 10^{-4} T_0$. The reference value of the total energy $E_0$ is chosen to be the total energy at time $t=0$, which is $1.1\times 10^5$ kg m$^2$/s$^2$.} 
\label{fig:beam_energy}
\end{center}
\end{figure}

\begin{figure}
\begin{center}
\begin{tabular}{cc}
\includegraphics[angle=0, trim=25 120 25 120, clip=true, scale = 0.36]{./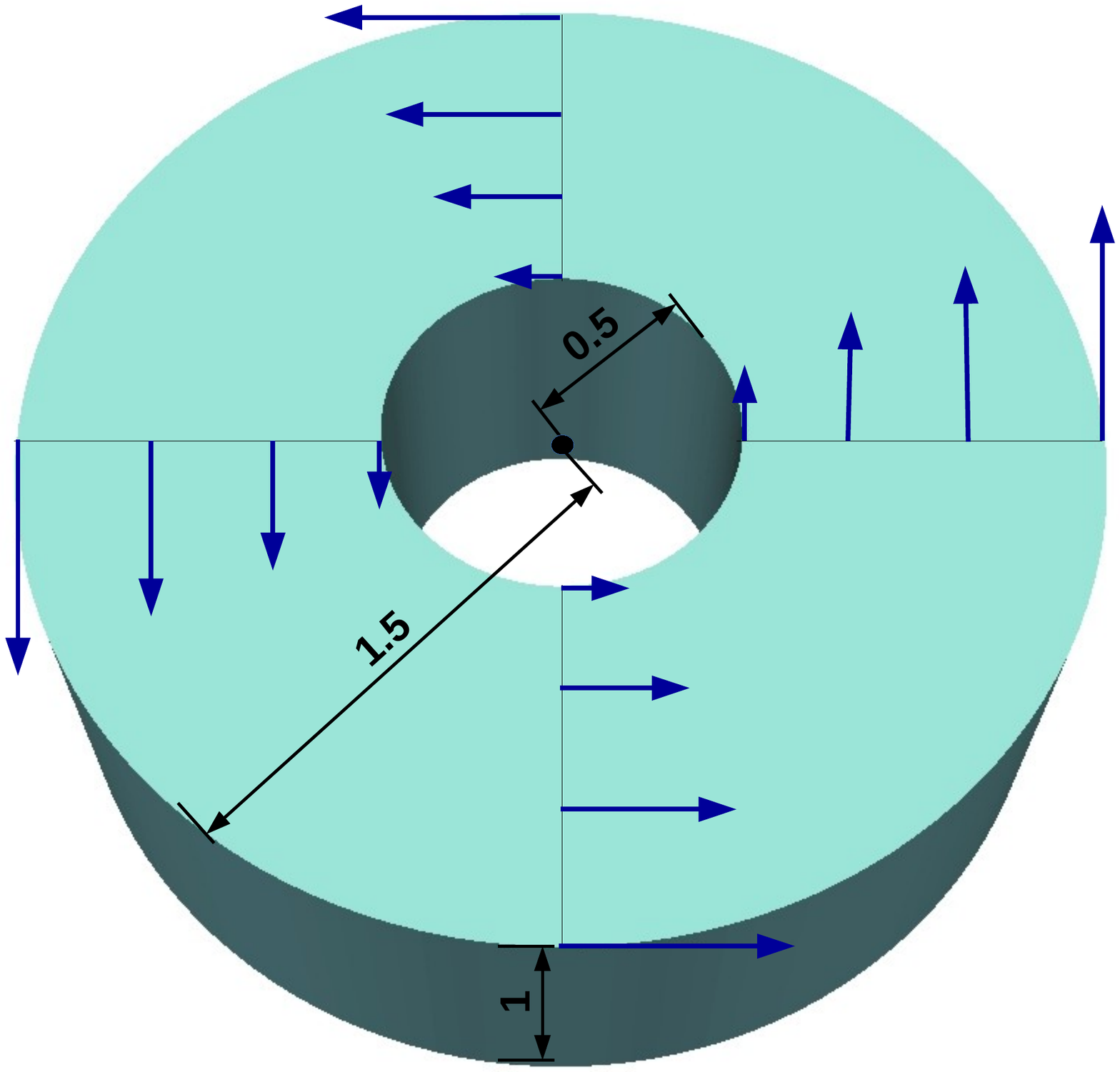} &
\includegraphics[angle=0, trim=400 80 360 280, clip=true, scale = 0.26]{./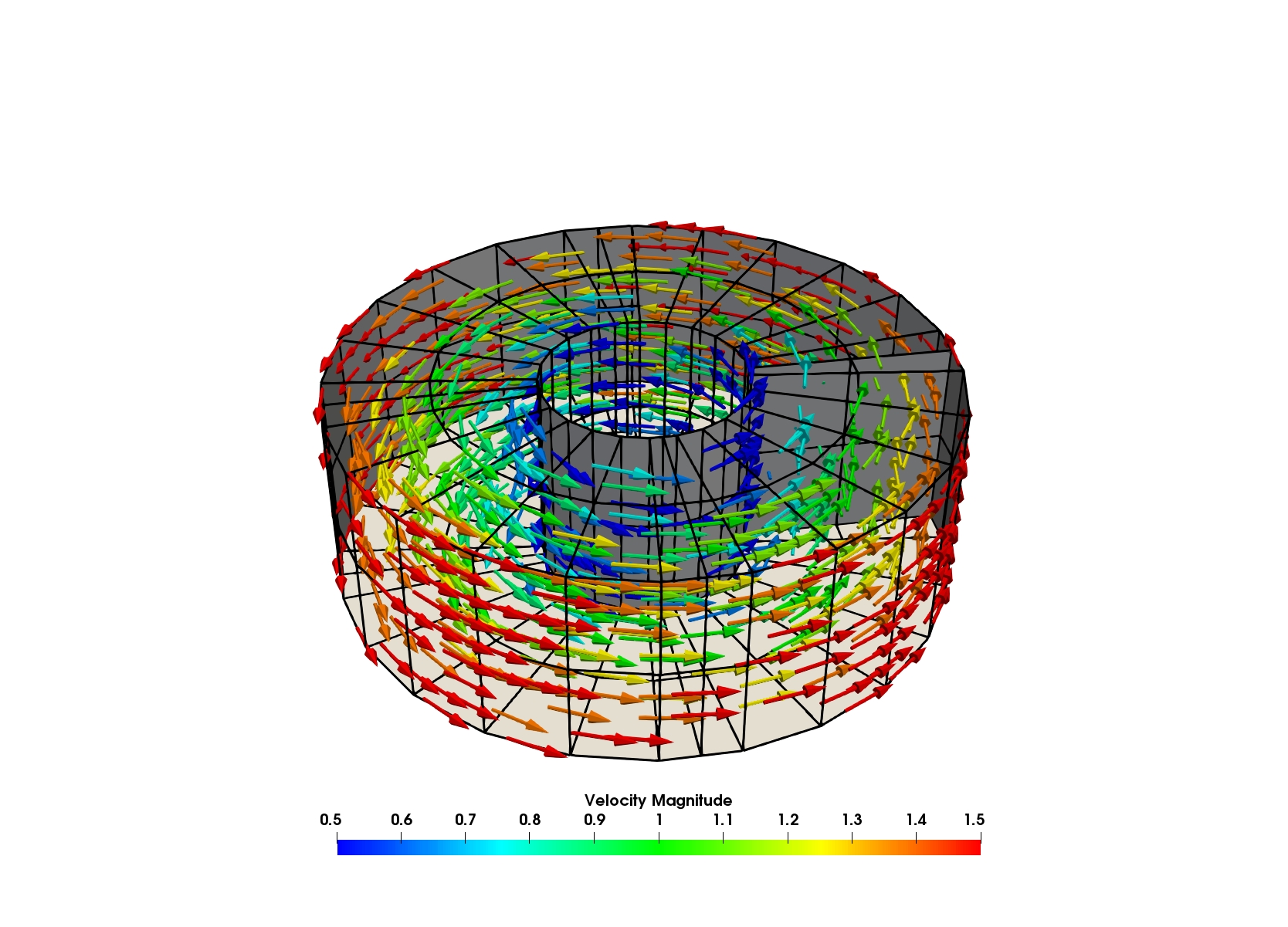}\\
(a) & (b)
\end{tabular}
\caption{The three-dimensional spinning annular disk: (a) the geometrical setting and the initial condition; (b) a snapshot of the velocity field.} 
\label{fig:annular_setting_and_snapshot}
\end{center}
\end{figure}

\subsection{A spinning annular disk}
In this example, we study a spinning annular disk with zero traction boundary condition imposed on all boundary faces. In Figure \ref{fig:annular_setting_and_snapshot} (a), the geometrical setting is illustrated. The inner radius of the disk is $0.5$ m, the outer radius is $1.5$ m, and the thickness is $1$ m. The material of the disk is Neo-Hookean with density $\rho_0 = 10$ kg/m$^3$ and shear modulus $c_1 = 7.5$ Pa. Both the geometrical and material settings follow the benchmark example in \cite{Krueger2016}. The initial displacement is zero, and the spinning motion is initiated by an initial angular velocity of $1$ rad/s in the x-y plane, that is
\begin{align*}
\bm V(\bm X, 0) = \left( - V_0 \frac{Y}{L_0}, V_0 \frac{X}{L_0}, 0 \right)^T, \quad V_0 = 1 \textup{m}/\textup{s}.
\end{align*}
We choose the reference scales as $L_0 = 1$ m, $M_0 = 1$ kg, and $T_0 = 1$ s. The geometry of the domain can be exactly parametrized by quadratic NURBS, and hence we choose $\mathsf p =2$ for the discrete function spaces with $\mathsf a = 1$ and $\mathsf b = 0$. A coarse mesh is generated with 32 elements in the circumferential direction, 4 elements in the radial direction, and 4 elements in the axial direction; a fine mesh is generated with 64 elements in the circumferential direction, 8 elements in the radial direction, and 8 elements in the axial direction. The time step size is $\Delta t = 2\times 10^{-4}$ s, and the problem in integrated up to $T=10.0$ s. For the simulations, we use $\textup{tol}_{\textup{R}} = 10^{-8}$ and $\textup{tol}_{\textup{A}} = 10^{-8}$ as the stopping criteria. In Figure \ref{fig:annular_setting_and_snapshot} (b), a snapshot of the simulated velocity in the annular disk is depicted. Due to the zero traction boundary condition and the zero body force, this problem serves as a benchmark for examining the energy stability as well as the momentum conservation properties. In Figure \ref{fig:spinning_energy_and_linear_momentum} (a), we can see that the kinetic energy and the total energy are nicely conserved. In Figure \ref{fig:spinning_energy_and_linear_momentum} (b), the relative errors of the total energy over time are plotted, which are uniformly smaller than $3\times 10^{-6}$. The exact value of the linear momentum is zero, and we see that the absolute errors are less than $1.5\times 10^{-13}$ in Figure \ref{fig:spinning_energy_and_linear_momentum} (c). The x- and y-components of the angular momentum are zero, with numerical values having absolute errors less than $10^{-13}$ (Figure \ref{fig:spinning_angular_momentum}). The analytic value of the z-component of the angular momentum is $78.5$ kg$\cdot$m$^2$/s, and we depict its relative error from the simulation with the coarse mesh. Note that the error of the z-component of the angular momentum is highly oscillatory and is bounded by $8\times 10^{-9}$. The numerical results corroborate the estimates given in Section \ref{subsec:semi_discrete_formulation}.

\begin{figure}
\begin{center}
\begin{tabular}{c}
\includegraphics[angle=0, trim=85 85 160 80, clip=true, scale = 0.38]{./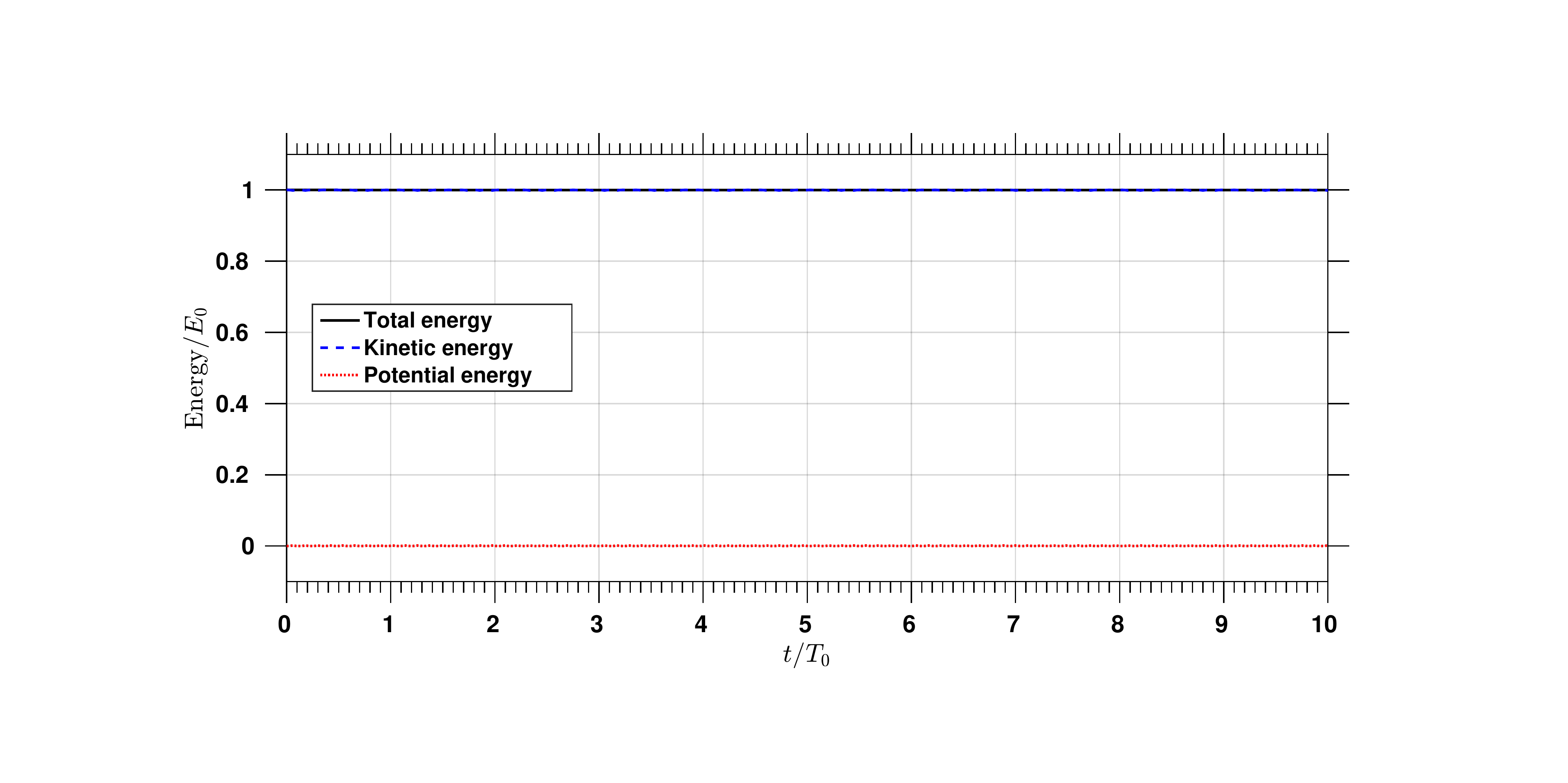} \\
(a)\\
\includegraphics[angle=0, trim=85 85 160 80, clip=true, scale = 0.38]{./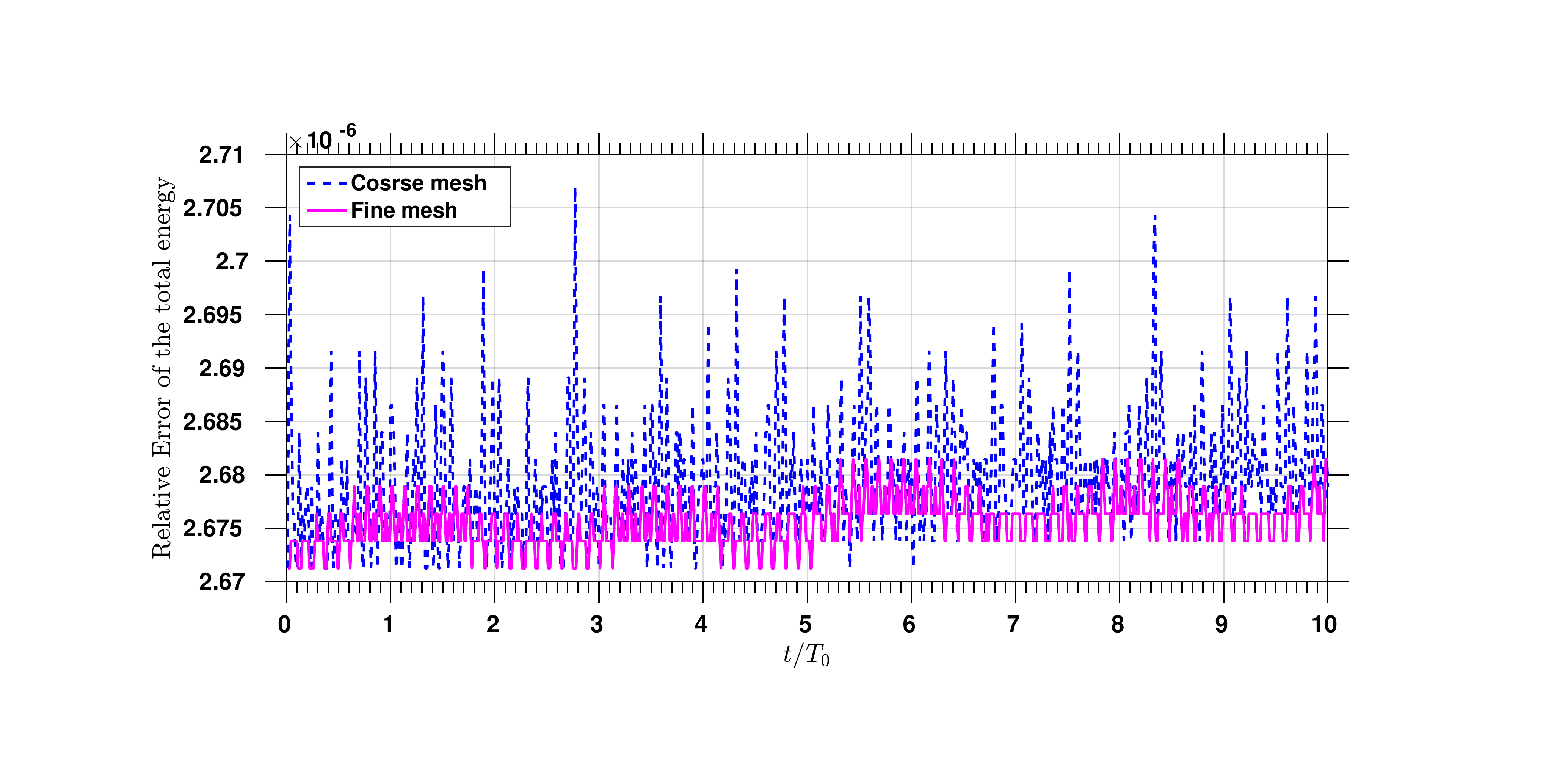} \\
(b)\\
\includegraphics[angle=0, trim=85 85 160 80, clip=true, scale = 0.38]{./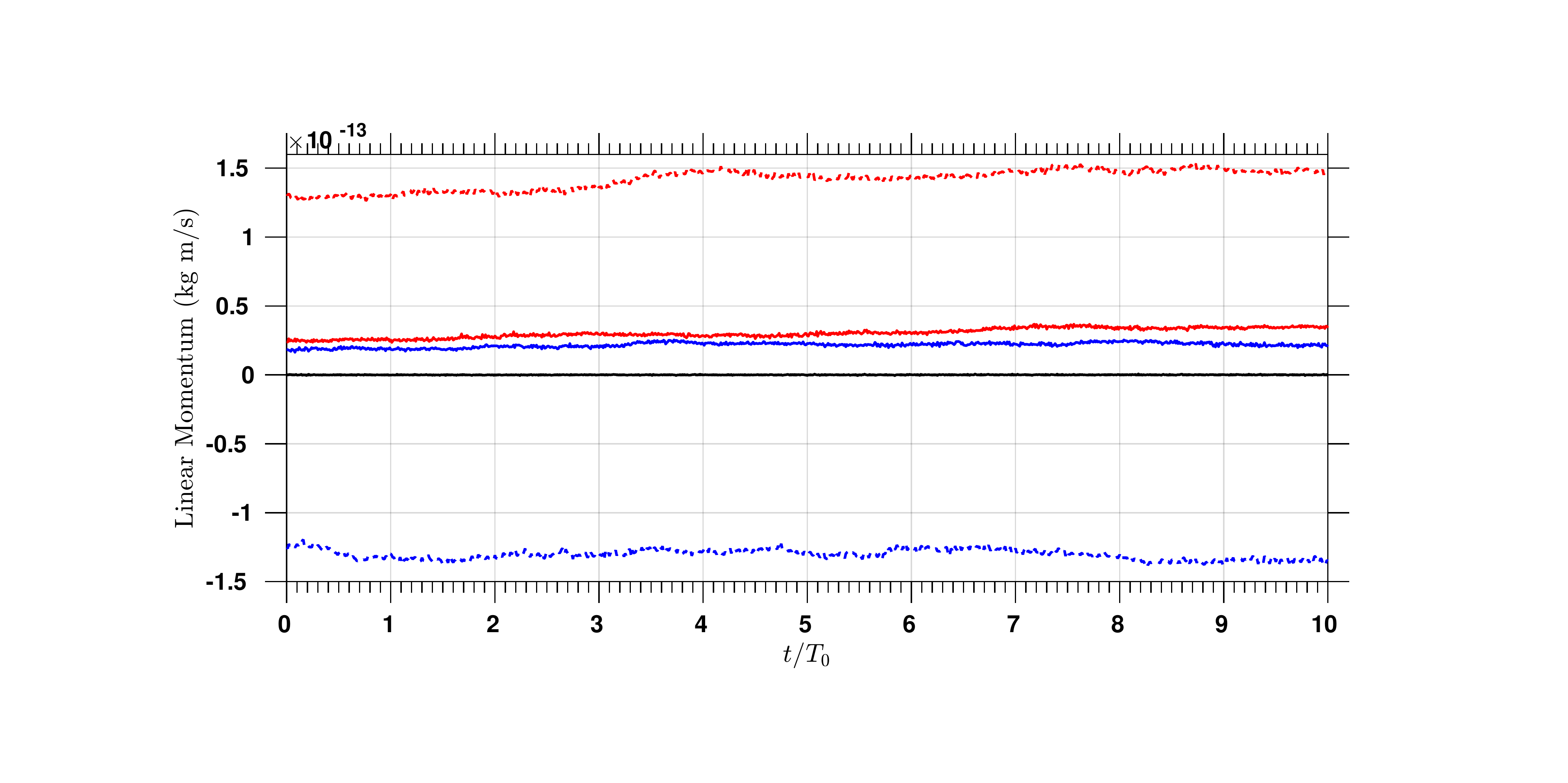} \\
(c)
\end{tabular}
\caption{(a) The total, kinetic, and potential energies over time using the coarse mesh scaled by $E_0 = 29.27$ J, which is the initial total energy; (b) The relative errors of the total energy over time for the two different meshes; (c) The x-, y-, and z-components of the linear momentum are plotted in the blue, red, and black colors respectively, and the results for the fine mesh and coarse mesh are plotted in solid and dashed lines respectively. } 
\label{fig:spinning_energy_and_linear_momentum}
\end{center}
\end{figure}

\begin{figure}
\begin{center}
\begin{tabular}{c}
\includegraphics[angle=0, trim=85 85 160 80, clip=true, scale = 0.38]{./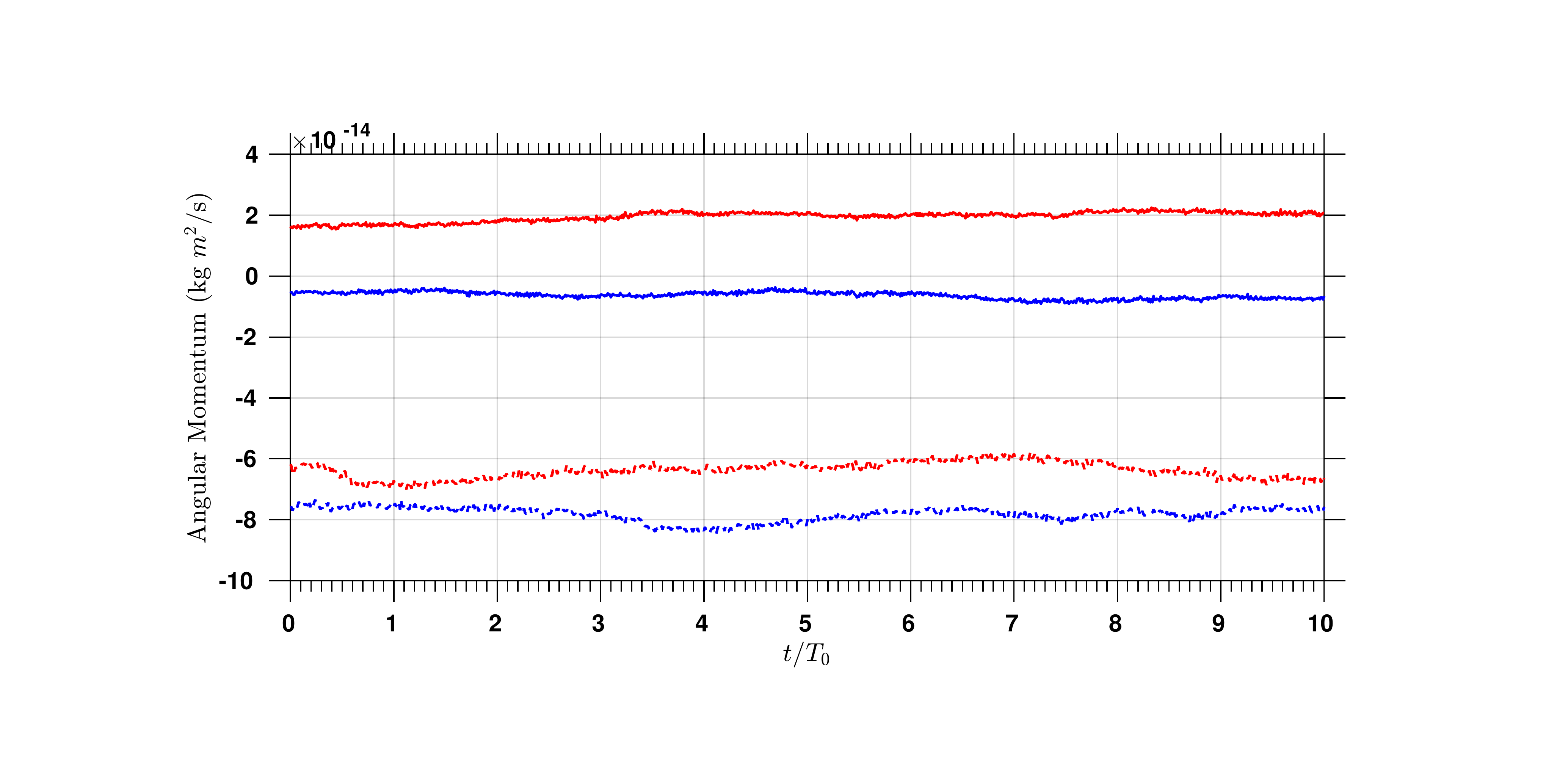} \\
(a)\\
\includegraphics[angle=0, trim=85 85 160 80, clip=true, scale = 0.38]{./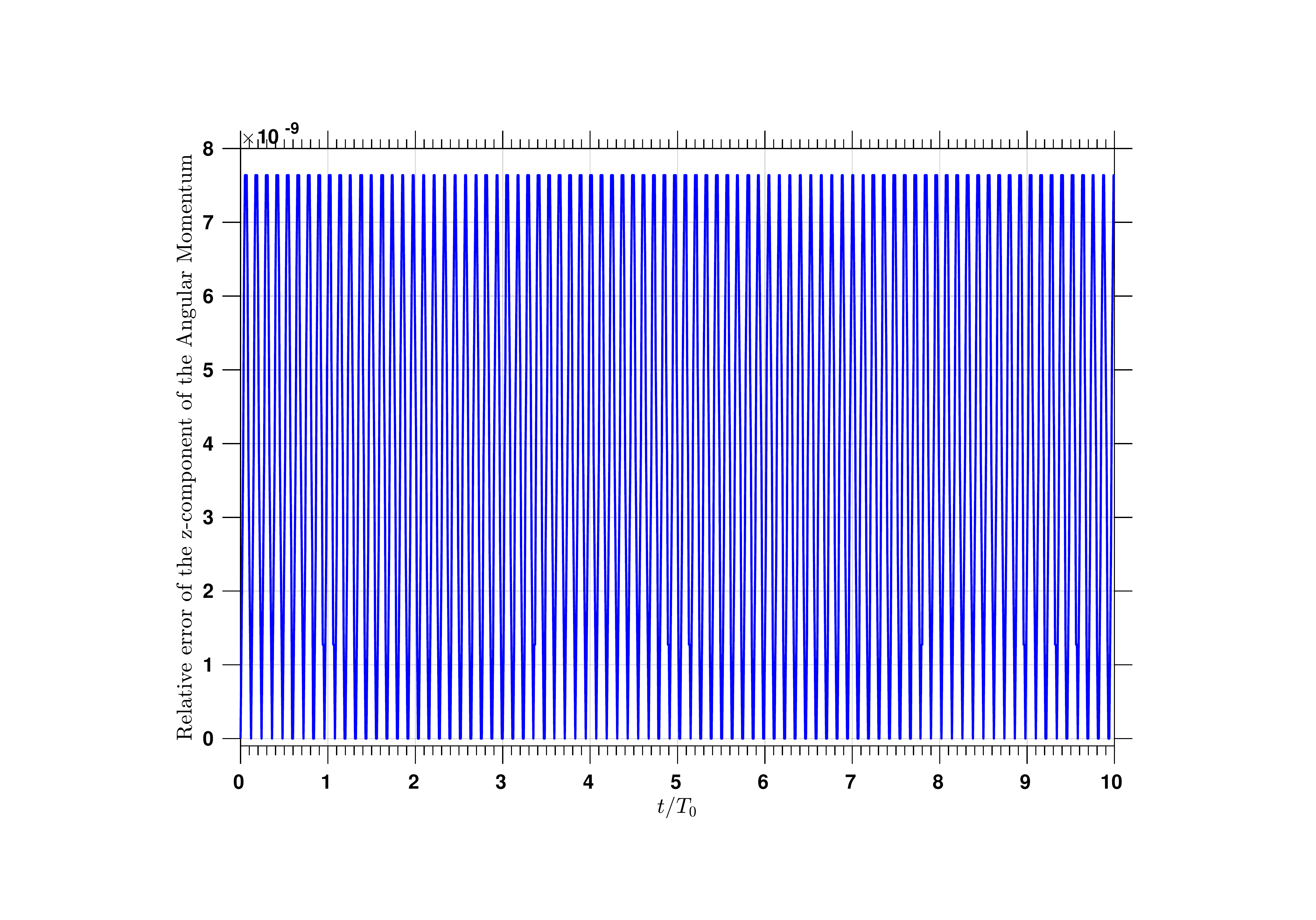} \\
(b)
\end{tabular}
\caption{(a) The x- and y-components of the angular momentum are plotted in the blue and red colors respectively, and the results for the fine mesh and coarse mesh are plotted in solid and dashed lines respectively; (b) The relative error of the z-component of the angular momentum over time for the coarse mesh. } 
\label{fig:spinning_angular_momentum}
\end{center}
\end{figure}

\section{Conclusions and future work}
\label{sec:conclusion}
In this work, we presented a new numerical formulation for incompressible hyper-elastodynamics. We have revealed that the proposed formulation possesses a physically compatible notion of numerical stability, and the inf-sup condition can be utilized to give a bound for the pressure. These properties favorably distinguish the proposed formulation from previously existing ones \cite{Hoffman2011,Idelsohn2008,Liu1986,Sussman1987}. We use smooth generalizations of the Taylor-Hood element based on NURBS for the spatial discretization, aiming to provide a higher-order method that is stable, robust, and implementationally convenient. The inf-sup stability for the elements is elucidated through numerical assessment. A variety of benchmark examples are simulated to investigate the effectiveness of the method in different loading conditions and for different material models. In particular, two dynamic problems are studied to verify the numerical stability and conservation properties.

In addition to the superior accuracy in stress calculations, the adoption of NURBS elements makes the description of material anisotropy convenient because the mesh naturally aligns along the axial, circumferential, and radial directions. These attributes make the proposed formulation a promising candidate for biomedical problems. Based on the proposed formulation, the anisotropic arterial wall model will be further refined with detailed stress-driven mass production and removal for individual constituents that comprise the tissue. This will lead to a three-dimensional patient-specific predictive tool for vascular growth and remodeling. 

\section*{Acknowledgements}
This work is supported by the National Institutes of Health (NIH) under the award numbers 1R01HL121754 and 1R01HL123689, the National Science Foundation (NSF) CAREER award OCI-1150184, and computational resources from the Extreme Science and Engineering Discovery Environment supported by the NSF grant ACI-1053575. The authors acknowledge TACC at the University of Texas at Austin for providing computing resources that have contributed to the research results reported within this paper.

\bibliographystyle{plain}  
\bibliography{mixed_solids}

\end{document}